\renewcommand{\eqref}[1]{\hyperref[#1]{(\ref{#1})}}
\newlist{enumlist}{enumerate}{2}
\setlist[enumlist,1]{labelindent=0cm,label=\arabic*.,ref=\arabic*,labelwidth=2.5ex,labelsep=0.5ex,leftmargin=3ex,align=left,topsep=0.5ex,itemsep=1ex,parsep=1ex}
\setlist[enumlist,2]{labelindent=0cm,label=\theenumlisti.\arabic*.,ref=\arabic*,labelwidth=5ex,labelsep=0.5ex,leftmargin=5.5ex,align=left,topsep=0.5ex,itemsep=1ex,parsep=1ex}
\newlist{itemlist}{itemize}{1}
\setlist[itemlist]{labelindent=0cm,label=$\bullet$,labelwidth=2.5ex,labelsep=0.5ex,leftmargin=3ex,align=left,topsep=0.5ex,itemsep=1ex,parsep=1ex}
\numberwithin{equation}{section}
\theoremstyle{definition}\newtheorem{definition}{Definition}[section]
\newtheorem*{definition*}{Definition}
\newtheorem*{example*}{Example}
\newtheorem*{examples*}{Examples}}
\newtheorem{proposition}[definition]{Proposition}
\newtheorem{lemma}[definition]{Lemma}
\newtheorem{theorem}[definition]{Theorem}
\newtheorem{letterthm}{Theorem}
\theoremstyle{definition}}
\newcommand{\C}{\mathbb{C}}
\newcommand{\cC}{\mathcal{C}}
\newcommand{\eps}{\varepsilon}
\newcommand{\al}{\alpha}
\newcommand{\be}{\beta}
\newcommand{\ot}{\otimes}
\newcommand{\Z}{\mathbb{Z}}
\newcommand{\vphi}{\varphi}
\newcommand{\id}{\mathord{\text{\rm id}}}
\newcommand{\om}{\omega}
\newcommand{\N}{\mathbb{N}}
\newcommand{\ovt}{\mathbin{\overline{\otimes}}}
\newcommand{\Om}{\Omega}
\newcommand{\si}{\sigma}
\newcommand{\R}{\mathbb{R}}
\newcommand{\F}{\mathbb{F}}
\newcommand{\cH}{\mathcal{H}}
\newcommand{\cG}{\mathcal{G}}
\newcommand{\cK}{\mathcal{K}}
\newcommand{\cF}{\mathcal{F}}
\newcommand{\actson}{\curvearrowright}
\newcommand{\cU}{\mathcal{U}}
\newcommand{\Ker}{\operatorname{Ker}}
\newcommand{\Aut}{\operatorname{Aut}}
\newcommand{\cS}{\mathcal{S}}
\newcommand{\cL}{\mathcal{L}}
\newcommand{\mutil}{\widetilde{\mu}}
\newcommand{\Xtil}{\widetilde{X}}
\newcommand{\dpr}{^{\prime\prime}}
\newcommand{\otalg}{\mathbin{\otimes_{\text{\rm alg}}}}
\newcommand{\op}{^\text{\rm op}}
\newcommand{\End}{\operatorname{End}}
\newcommand{\Util}{\widetilde{U}}
\newcommand{\bim}[3]{\mathord{\raisebox{-0.4ex}[0ex][0ex]{\scriptsize $#1$}\hspace{0.2ex}\mbox{$#2$}\raisebox{-0.4ex}[0ex][0ex]{\scriptsize $#3$}}}
\begin{document}

\begin{center}
{\boldmath\LARGE\bf Measure equivalence embeddings of free groups\vspace{0.5ex}\\ and free group factors}

\bigskip

{\sc by Tey Berendschot\footnote{\noindent KU~Leuven, Department of Mathematics, Leuven (Belgium).\\ E-mails: tey.berendschot@kuleuven.be and stefaan.vaes@kuleuven.be.}\textsuperscript{,}\footnote{\noindent T.B. is supported by PhD fellowship fundamental research 1101322N of the Research Foundation Flanders.} and Stefaan Vaes\textsuperscript{1,}\footnote{S.V.\ is supported by Methusalem grant METH/21/03 –- long term structural funding of the Flemish Government and by FWO research project G090420N of the Research Foundation Flanders.}}
%
%
\end{center}

\begin{abstract}\noindent
\noindent We give a simple and explicit proof that the free group $\F_2$ admits a measure equivalence embedding into any nonamenable locally compact, second countable group $G$. We use this to prove that every nonamenable locally compact, second countable group admits strongly ergodic actions of any possible Krieger type and admits nonamenable, weakly mixing actions with any prescribed flow of weights.\vspace{1ex}\\
We also introduce concepts of measure equivalence and measure equivalence embeddings for II$_1$ factors. We prove that a II$_1$ factor $M$ is nonamenable if and only if the free group factor $L(\F_2)$ admits a measure equivalence embedding into $M$. We prove stability of property~(T) and the Haagerup property under measure equivalence of II$_1$ factors.
\end{abstract}

\section{Introduction}

The von Neumann-Day problem famously asked whether every nonamenable group contains the free group $\F_2$ as a subgroup. Although this was shown to be false in \cite{Ols80}, the problem has a measurable group-theoretic counterpart that does hold. More precisely, it was shown by Gaboriau and Lyons in \cite[Theorem 1]{GL07} that for every countable nonamenable group $G$, the orbit equivalence relation of the Bernoulli action $G \actson [0,1]^G$ contains a.e.\ the orbits of an essentially free action $\F_2 \actson [0,1]^G$. This implies in particular that $\F_2$ is a measure equivalence (ME) subgroup of $G$ (in the sense of Gromov, see Definition \ref{def.ME-subgroup} for terminology). Gheysens and Monod showed in \cite[Theorem 1]{GM15} that actually every nonamenable locally compact second countable (lcsc) group $G$ contains $\F_2$ as an ME subgroup. Their proof combines the percolation theory methods \cite{GL07} with the solution of Hilbert's fifth problem on the structure of lcsc groups.

Our first main result is an explicit and simple construction to embed $\F_2$ as an ME subgroup into any nonamenable lcsc group. Because of the explicit nature of our measure equivalence embedding, we can derive several extra properties of weak mixing and strong ergodicity.

As we recall in Section \ref{subsec.ME}, a measure equivalence embedding of $\F_2$ into an lcsc group $G$ is defined by a nonsingular action $\F_2 \times G \actson (\Om,\gamma)$ whose restrictions to $\F_2$ and to $G$ admit a fundamental domain (in the appropriate sense when $G$ is nondiscrete) and such that $\F_2 \actson \Om/G$ admits an equivalent invariant probability measure (the finite covolume condition).

\begin{letterthm}\label{thm.main-ME-embedding}
Let $G$ be an lcsc group. Assume that $\nu$ is a probability measure on $G$ such that $\nu$ is equivalent with the left Haar measure and the convolution operator $\lambda(\nu)$ on $L^2(G)$ has norm less than $1/3$ (the existence of which is equivalent to the nonamenability of $G$). Consider the Bernoulli action $\F_2 \actson (X,\mu) = (G \times G,\nu \times \nu)^{\F_2}$.
\begin{enumlist}
\item The action $\F_2 \times G \actson (\Om,\gamma) = (X \times G,\mu \times \nu)$ given by
$$a \cdot (x,h) = (a \cdot x, (x_e)_1 h) \;\; , \;\; b \cdot (x,h) = (b \cdot x, (x_e)_2 h) \;\; , \;\; g \cdot (x,h) = (x,hg^{-1})$$
with $a,b$ freely generating $\F_2$ and $g \in G$, is a measure equivalence embedding of $\F_2$ into $G$.
\item This ME embedding is weakly mixing: the essentially free action $G \actson \F_2 \backslash \Om$ is weakly mixing.
\item This ME embedding is stably strongly ergodic: $G \actson \F_2 \backslash \Om$ is strongly ergodic and the diagonal product with any strongly ergodic pmp action $G \actson (Z,\zeta)$ remains strongly ergodic.
\end{enumlist}
\end{letterthm}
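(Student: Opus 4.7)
For item (1), the plan is to verify directly: (a) the $\F_2$- and $G$-actions commute, because $\F_2$ left-multiplies $h$ by an $x$-dependent factor while $G$ right-multiplies by $g^{-1}$; (b) both are nonsingular for $\gamma=\mu\times\nu$, using $\nu\sim\operatorname{Haar}$; (c) both are essentially free (the $G$-action literally so, and the $\F_2$-action inherits essential freeness from the Bernoulli base); (d) the cross-section $X\times\{e\}$ serves as a $G$-fundamental domain with transverse measure $\mu$, and $\Omega/G\cong X$ carries the $\F_2$-invariant Bernoulli measure $\mu$, giving the finite covolume.

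For item (2) and the non-stable strong ergodicity in item (3), I would observe that $G$ acts freely on $\F_2\backslash\Omega$ with trivial stabilizers, each $G$-orbit being a copy of $G$ on which $G$ acts by right translation. Using the pmp cross-section relation $\F_2\actson(X,\mu)$ of this $G$-action, matrix coefficients of the $G$-Koopman representation are integrals, over this transverse structure, of matrix coefficients of the right regular representation $\rho$ of $G$. Thus the Koopman representation on $L^2(\F_2\backslash\Omega)$ is weakly contained in $\rho$, and since $G$ is nonamenable (hence noncompact), $\rho$ has neither a nonzero finite-dimensional subrepresentation nor almost-invariant vectors---yielding weak mixing and strong ergodicity. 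Neither conclusion uses the $1/3$ bound.

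The $\|\lambda(\nu)\|<1/3$ condition enters only for stability. The cross-section of the diagonal $G$-action on $\F_2\backslash\Omega\times Z$ (for any strongly ergodic pmp $G\actson(Z,\zeta)$) is the skew-product pmp $\F_2$-action on $(X\times Z,\mu\times\zeta)$ defined by $s\cdot(x,z)=(s\cdot x,\omega(s,x)\cdot z)$, so stable strong ergodicity reduces to strong ergodicity of this $\F_2$-action. Split $L^2_0(X\times Z)=L^2_0(X)\oplus(L^2(X)\otimes L^2_0(Z))$: the Bernoulli summand has spectral gap from nonamenability of $\F_2$. For the second summand, the key input is an \emph{independence property} of the cocycle: for every reduced word $w=s_1\cdots s_n$ in $\F_2$, the factors in
\[
\omega(w,x)=\omega(s_1,s_2\cdots s_n\cdot x)\cdots\omega(s_n,x)
\]
access $x$ at $n$ distinct pairs in $\F_2\times\{1,2\}$ (a short case check using reducedness of $w$), so $\omega(w,\cdot)_*\mu=\nu_1*\cdots*\nu_n$ with each $\nu_i\in\{\nu,\check\nu\}$, whence $\|\lambda(\omega(w,\cdot)_*\mu)\|\leq\|\lambda(\nu)\|^{|w|}<3^{-|w|}$. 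Combined with the sphere bound $\#\{w\in\F_2:|w|=n\}\leq 4\cdot 3^{n-1}$, the threshold $1/3$ is exactly what keeps $3^n\|\lambda(\nu)\|^n$ summable; a Haagerup-type inequality then forces the cocycle-twisted $\F_2$-representation on $L^2(X)\otimes L^2_0(Z)$ to be weakly contained in the regular representation of the nonamenable $\F_2$, ruling out almost-invariant vectors.

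The main obstacle I expect is this second-summand estimate. The naive bound $\|\pi(\omega(w,\cdot)_*\mu)\|\leq\|\lambda(\nu)\|^{|w|}$ (with $\pi$ the Koopman of $G\actson Z$ on $L^2_0(Z)$) is not directly available, since $\pi$ need not be tempered even though $G\actson Z$ is strongly ergodic. The proof must instead exploit the independence of the $n$ cocycle factors together with the strong ergodicity of $G\actson Z$, working directly on the combined Hilbert space $L^2(X)\otimes L^2_0(Z)$, to produce decay in $|w|$ of the matrix coefficients in terms of $\|\lambda_G(\nu)\|^{|w|}$ and thus bypass any a priori bound on $\|\pi(\nu)\|$.
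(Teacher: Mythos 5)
Your treatment of item (1) omits the one nontrivial requirement of Definition \ref{def.ME-subgroup}: both actions in a correspondence must be \emph{dissipative}, and while the $G$-action on $\Om = X \times G$ visibly has the fundamental domain $X \times \{e\}$ and the finite covolume condition is immediate ($\Om/G \cong X$ with the invariant Bernoulli measure), you never address why the skew product action $\F_2 \actson X \times G$ admits a fundamental domain. That is precisely where the paper uses $\|\lambda(\nu)\| < 1/3$: by Proposition \ref{prop.ME-embedding}, independence of the coordinates gives $(\mu \times \lambda)\bigl((X\times K) \cap g^{-1}\cdot(X \times K)\bigr) = \langle T_1 \cdots T_{|g|} 1_K , 1_K\rangle \leq \lambda(K)\,\|\lambda(\nu)\|^{|g|}$, which is summable over $\F_2$ against the sphere count $4 \cdot 3^{n-1}$ exactly when the norm is below $1/3$; Borel--Cantelli then places $X \times K$ in the dissipative part. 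Ironically, the computation you develop for stability ($\om(w,\cdot)_*\mu = \nu_1 * \cdots * \nu_{|w|}$ together with the $4\cdot 3^{n-1}$ count) \emph{is} this argument, aimed at the wrong target: your assertion that the $1/3$ bound ``enters only for stability'' is inverted relative to the paper, where it is used only for item (1), and items (2) and (3) need no norm hypothesis at all.

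The spectral route you propose for items (2) and (3) has two genuine failures. First, free $G$-orbits on $\F_2 \backslash \Om$ do not give weak containment of the Koopman representation in $\rho$: the orbit relation of this \emph{ergodic} action is not smooth, so there is no transverse disintegration of matrix coefficients; lifting a vector to a fundamental domain $D \subset X \times G$ expresses $\langle U_g \xi, \xi\rangle$ as a sum over $w \in \F_2$ of cross terms, which is again the quantity needing the transience estimate. (Free pmp actions of $\F_2$ that are not strongly ergodic show that free orbits alone never yield temperedness.) Second, and fatally for your stability plan, weak containment in $\lambda_{\F_2}$ of the twisted representation on all of $L^2(X) \ot L^2_0(Z)$ is \emph{false} whenever $G \actson Z$ is strongly ergodic without spectral gap (such actions exist): if $\eta_n \in L^2_0(Z)$ are unit vectors almost invariant under the Koopman representation $\pi$, then $\langle \pi_{\text{skew}}(w)(1 \ot \eta_n), 1 \ot \eta_n\rangle = \langle \pi(\nu_1 * \cdots * \nu_{|w|})\eta_n, \eta_n\rangle \to 1$ for each fixed $w$, so there is no decay in $|w|$ on the subspace $1 \ot L^2_0(Z)$ and the skew product has almost invariant vectors. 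The paper's Lemma \ref{lem.random-strongly-ergodic} proves only the weaker concentration statement --- approximately invariant vectors converge to $1 \ot \cH$ --- and does so with no norm hypothesis, by an exact orthogonality argument showing that on the complement of $1 \ot \cH$ the restrictions to the free subgroups $\langle ab^{-1}, a^2b^{-2}\rangle$ and $\langle a^{-1}b, a^{-2}b^2\rangle$ are multiples of the regular representation. The residual part is then handled in Proposition \ref{prop.skew-strong-ergodicity} at the level of convergence in measure, using strong ergodicity (not spectral gap) of $G \actson Z$ plus a support-density argument, and transferred to $G \actson \F_2\backslash\Om \times Z$ by Proposition \ref{prop.stability-of-strong-ergodicity-under-correspondence} --- the reduction you correctly invoke but whose hypothesis your spectral estimate cannot supply. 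Weak mixing in item (2) is likewise obtained from Kakutani's random ergodic theorem (Lemma \ref{lem.random-ergodic}, Proposition \ref{prop.skew-ergodicity}) and the same transfer, not from absence of finite-dimensional subrepresentations, which for a nonsingular action would in any case not directly yield the diagonal-product formulation of weak mixing used here.
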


We prove Theorem \ref{thm.main-ME-embedding} in Section \ref{sec.proof-A}. Note that by Kesten's theorem (see \cite[Th\'{e}or\`{e}me 4]{BC73} in this generality), for any probability measure $\nu_0$ with full support on a nonamenable lcsc group, we have $\|\lambda(\nu_0)\| < 1$, so that a sufficiently high convolution power $\nu = \nu_0^{\ast k}$ satisfies $\|\lambda(\nu)\| < 1/3$. Therefore, Theorem \ref{thm.main-ME-embedding} proves very explicitly that $\F_2$ admits a measure equivalence embedding into any nonamenable lcsc group $G$. Note however that for countable groups $G$, we cannot recover in this way the full strength of the Gaboriau-Lyons theorem \cite{GL07}, who showed that the orbit equivalence relation of $G \actson [0,1]^G$ contains a.e.\ the orbits of an essentially free action of $\F_2$.

While the von Neumann-Day problem for groups was solved in the negative in \cite{Ols80}, the corresponding problem for II$_1$ factors remains wide open: does every nonamenable II$_1$ factor contain a copy of $L(\F_2)$ as a von Neumann subalgebra? While we cannot solve this problem, our simple approach to Theorem \ref{thm.main-ME-embedding} led us to a concept of \emph{measure equivalence (ME) embeddings for II$_1$ factors} and to proving that $L(\F_2)$ admits a measure equivalence embedding into any nonamenable II$_1$ factor.

Roughly speaking, an ME embedding of a II$_1$ factor $A$ into a II$_1$ factor $B$ is given by a Hilbert $A$-$B$-bimodule $\bim{A}{\cK}{B}$ together with the extra data (``the finite covolume condition'') of a \emph{finite} von Neumann algebra $Q \subset \End_{A\text{-}B}(\cK)$ such that the resulting bimodules $\bim{A}{\cK}{Q\op}$ and $\bim{Q}{\cK}{B}$ are both coarse and the latter is also finitely generated. If both are finitely generated, we get the concept of a \emph{measure equivalence} of $A$ and $B$. The precise definition is given in \ref{def.ME-II1}.

Our concept of ME embeddings for finite von Neumann algebras is compatible with the concept of ME embeddings for groups: we prove in Proposition \ref{prop.ME-groups} that an ME embedding of a countable group $\Gamma$ into a countable group $\Lambda$ gives rise, in a canonical way, to an ME embedding of $L(\Gamma)$ into $L(\Lambda)$.

On the one hand, our notion of measure equivalence (embedding) for II$_1$ factors is strong enough to capture qualitative properties of II$_1$ factors: if $A$ admits an ME embedding into $B$, then amenability or the Haagerup approximation property are inherited (see Proposition \ref{prop.inherited}) from $B$ to $A$, and Kazhdan's property~(T) is preserved under measure equivalence (Proposition \ref{prop.preserve-T}). On the other hand, our notion of measure equivalence embedding for II$_1$ factors is sufficiently flexible to obtain the following positive variant of the von Neumann-Day problem for II$_1$ factors.

\begin{letterthm}\label{thm.main-ME-embedding-II-1-factors}
A II$_1$ factor $M$ is nonamenable if and only if $L(\F_2)$ admits a measure equivalence embedding into $M$.
\end{letterthm}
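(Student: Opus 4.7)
The plan is to handle the two directions separately, invoking Proposition~\ref{prop.inherited} and Theorem~\ref{thm.free-group-factor-ME} as signposted in the introduction. For the \emph{necessity direction}, I would appeal to the inheritance of amenability through ME embeddings of II$_1$-factors, which should form part of Proposition~\ref{prop.inherited}. At the bimodule level, the coarse finitely generated $Q$-$M$-sub-bimodule forces $Q$ to inherit amenability from $M$ (by amenable relative embedding), and then the coarse $\bim{L(\F_2)}{\cK}{Q\op}$-sub-bimodule, combined with amenability of $Q$, produces by a Connes-type weak-containment argument a weak containment of $L^2(L(\F_2))$ in the coarse $L(\F_2)$-bimodule. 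This forces $L(\F_2)$ to be amenable, a well-known contradiction.

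For the \emph{sufficiency direction}, my strategy is to imitate Theorem~\ref{thm.main-ME-embedding} at the level of Hilbert bimodules. Using the nonamenability of $M$, I would first extract two self-adjoint elements $T_1,T_2 \in M$ (playing the role of the probability measure $\nu$), with $\tau(T_1^2)+\tau(T_2^2)=1$, satisfying the spectral-gap condition that the ``convolution-like'' operator $L_{T_1}R_{T_1}+L_{T_2}R_{T_2}$ on the coarse bimodule $L^2(M)\ovt L^2(M)$ has norm strictly below $1/3$; the existence of such $T_i$ is the operator-algebraic analogue of taking high convolution powers of $\nu_0$ in the group case, and rests on Haagerup's characterisation of nonamenability for II$_1$-factors. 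Then, with the ``free-group Bernoulli'' tensor product $N = \ovt_{s\in\F_2}(M\ovt M,\tau\ovt\tau)$, I would take $\cK = L^2(N\ovt M)$ as an $L(\F_2)$-$M$-bimodule in which $M$ acts by right multiplication on the last $L^2(M)$-tensor factor and the free generators $a,b$ of $\F_2$ act on the left via the implementing unitaries of an automorphism of $N\ovt M$ given by the Bernoulli shift $\F_2 \actson N$ twisted, on the $L^2(M)$-tensor factor, by left multiplication by the two $M$-components of the $e$-th coordinate of $N$; this is the direct bimodule translation of $a\cdot(x,h)=(a\cdot x,(x_e)_1 h)$. The finite subalgebra $Q$ would be a diagonal copy of $M$ sitting inside $\End_{L(\F_2)-M}(\cK)$ so as to commute with both actions, which is exactly what produces a finite ``covolume''.

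\emph{The main obstacle} is verifying coarseness of $\bim{L(\F_2)}{\cK}{Q\op}$, which is the direct analogue of the finite-covolume condition for the $\F_2$-action in Theorem~\ref{thm.main-ME-embedding} and is the analytic heart of the argument. This should reduce to a Kesten-Haagerup-style spectral estimate: decomposing the regular representation of $L(\F_2)$ along word length in $\F_2$ and pushing the norm bound $<1/3$ on $L_{T_1}R_{T_1}+L_{T_2}R_{T_2}$ through the reduced-word combinatorics should produce a convergent geometric-series estimate on the matrix coefficients of the $L(\F_2)$-action on $\cK$ with respect to vectors cyclic for $Q$, exactly paralleling the proof of Theorem~\ref{thm.main-ME-embedding} in Section~\ref{sec.proof-A}.
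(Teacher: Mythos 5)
Both directions of your proposal contain genuine gaps, even though your top-level architecture (combine Theorem~\ref{thm.free-group-factor-ME} with Proposition~\ref{prop.inherited}) matches the paper's. For the \emph{necessity} direction, your first step --- that the coarse, finitely generated $Q$-$M$-bimodule forces $Q$ to inherit amenability from $M$ --- is simply false, and your chain of reasoning routes through it. Counterexample: take $M=R$ hyperfinite, $Q = L(\F_2)$, $\cK = L^2(Q \ovt R)$, with $Q$ acting by left multiplication on the first leg, $M$ by right multiplication on the second leg, and $A$ (any tracial von Neumann algebra embedded in $R$, even $A=\C$) acting by left multiplication on the second leg. Then $\bim{Q}{\cK}{M}$ is coarse and singly generated, $\bim{A}{\cK}{Q\op}$ is coarse, and the left $A$-action is faithful, so all conditions of Definition~\ref{def.ME-II1} hold; yet $Q$ is nonamenable while $M$ is amenable. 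The point is that $Q$ is auxiliary data constrained by nothing beyond finiteness. The paper's Proposition~\ref{prop.inherited} never uses amenability of $Q$: it uses amenability of $B=M$ directly. Via Lemma~\ref{lem.charac} one writes $\cK^{\oplus n} \cong \bim{\al(A)}{pL^2(P \ovt M)}{1 \ot M}$ with $P = M_n(\C) \ot Q\op$ and $\bim{\al(A)}{pL^2(P\ovt M)}{P \ot 1}$ coarse; amenability of $M$ gives that the trivial $(P \ovt M)$-bimodule is weakly contained in $L^2(P \ovt M) \ot_{P \ot 1} L^2(P \ovt M)$, one restricts to $\al(A)$, and the coarseness over $P$ makes the ambient $A$-bimodule coarse, forcing $A = L(\F_2)$ amenable, a contradiction. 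So your weak-containment endgame is right in spirit, but the amenability input must come from $M$, not from $Q$.

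For the \emph{sufficiency} direction, your construction is not well defined as stated. With $N = \ovt_{s \in \F_2}(M \ovt M)$ there is no unitary on $L^2(N \ovt M)$ implementing ``left multiplication of the last leg by the first component of the $e$-th coordinate'': the coordinates of a noncommutative Bernoulli shift are whole algebras, not unitary-valued random variables, and left multiplication by a non-unitary element is not isometric, so the formula $a \cdot (x,h) = (a \cdot x, (x_e)_1 h)$ has no direct bimodule transplant. The paper sidesteps exactly this by keeping the Bernoulli base \emph{classical}: by Connes' \cite[Remark 5.29]{Con76}, nonamenability of $M$ yields unitaries $u_1,\dots,u_n \in \cU(M)$ with $T = \frac{1}{n}\sum_i u_i \ot \overline{u_i}$ of norm $<1$ in $M \ovt M\op$ (powered to achieve $T = T^*$ and $\|T\|<1/3$); one then takes the pmp Bernoulli action $\F_2 \actson X = (\{1,\dots,n\}^2)^{\F_2}$, the unitary-valued $1$-cocycle $\om$ given by $\om(a,x)=u_i$, $\om(b,x)=u_j$ when $x_e=(i,j)$, and applies Proposition~\ref{prop.source-ME} with $\cK = L^2(P \ovt M)$, $P = L^\infty(X) \rtimes \F_2$. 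In particular, the finite-covolume algebra is (an amplification of) $P\op$, the crossed product of the base --- not a ``diagonal copy of $M$'', which would not commute with the shift-twisted $\F_2$-action. Your proposed self-adjoint $T_1,T_2 \in M$ with $\tau(T_1^2)+\tau(T_2^2)=1$ is not Connes/Haagerup's criterion and in any case cannot feed a cocycle, since only unitary values give a well-defined twist. Finally, the step you defer as ``the main obstacle'' is precisely the paper's entire analytic content: coarseness of the $L(\F_2)$-$P$ side reduces, via the vector $\zeta$ in the proof of Proposition~\ref{prop.source-ME}, to the summability $\sum_{g \in \F_2} \int_X |\tau(\om(g,x))|^2 \, d\mu(x) = \sum_{g}(\tau \ot \tau)(T^{|g|}) \leq \sum_g \|T\|^{|g|} < \infty$, using the $4 \cdot 3^{n-1}$ words of length $n$ and $\|T\|<1/3$; gesturing that ``reduced-word combinatorics should produce a geometric series'' names the estimate but does not prove it.
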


Moreover, we prove in Proposition \ref{prop.counterexample} that in general, measure equivalence of II$_1$ factors is strictly weaker than commensurability (i.e.\ the existence of a finite index bimodule). Altogether these results show that our new concept of measure equivalence (embeddings) is quite natural.

Going back to groups, every ME embedding of an lcsc group $H$ into an lcsc group $G$ allows to induce nonsingular $H$-actions to nonsingular $G$-actions. In particular, the concrete ME embeddings of Theorem \ref{thm.main-ME-embedding} allow to induce nonsingular actions of the free group to nonsingular actions of an arbitrary nonamenable lcsc group $G$. Since the ME embeddings in Theorem \ref{thm.main-ME-embedding} have several extra properties, we can then prove at the end of Section \ref{sec.proof-C} the following result, whose third point answers a question posed by Danilenko in \cite{Dan21} because strongly ergodic actions on nonatomic spaces are in particular nonamenable.

\begin{letterthm}\label{thm.main-actions-nonamenable}
Let $G$ be a nonamenable lcsc group. Let $\R\actson (Z,\zeta)$ be an ergodic nonsingular flow and $\lambda\in (0,1]$.
\begin{enumlist}
\item There exists an essentially free, weakly mixing, nonsingular action $G\actson (X,\mu)$ that is nonamenable in the sense of Zimmer and such that the crossed product $L^{\infty}(X)\rtimes G$ has flow of weights $\R\actson Z$.
\item There exists an essentially free, weakly mixing, nonsingular action $G\actson (X,\mu)$ that is strongly ergodic and such that the crossed product $L^{\infty}(X)\rtimes G$ is of type III$_{\lambda}$.
\item There exists an essentially free, weakly mixing, nonsingular action $G\actson (X,\mu)$ that is strongly ergodic and such that there is no equivalent $G$-invariant $\si$-finite measure $\mu' \sim \mu$.
\end{enumlist}
\end{letterthm}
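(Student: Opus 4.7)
The plan is to use the explicit ME embedding of Theorem~\ref{thm.main-ME-embedding} to induce suitable nonsingular actions of $\F_2$ up to $G$. Concretely, let $\F_2 \times G \actson (\Om,\gamma)$ be that ME embedding and let $D \subset \Om$ be a fundamental domain for the $G$-action, so that $\F_2 \actson D$ is a pmp action, strongly ergodic by part~3 of Theorem~\ref{thm.main-ME-embedding}. For an essentially free nonsingular action $\F_2 \actson (Y,\eta)$, the induced $G$-action is $G \actson X = \F_2 \backslash (Y \times \Om)$, with $\F_2$ acting diagonally and $G$ acting on $\Om$. The identification $X \cong Y \times D$ makes $L^\infty(X) \rtimes G$ Morita equivalent to $L^\infty(Y \times D) \rtimes \F_2$; since the pmp factor $D$ does not affect the flow of weights, $L^\infty(X) \rtimes G$ has the same flow of weights as $L^\infty(Y) \rtimes \F_2$, and essential freeness of $G \actson X$ is automatic from that of $G \actson \Om$.

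For each statement, I would pick an appropriate $\F_2$-action and transfer its properties through this induction. For (1), every nonamenable countable group, and in particular $\F_2$, admits for every ergodic nonsingular flow $\R \actson Z$ an essentially free, weakly mixing nonsingular action whose crossed product has flow of weights $\R \actson Z$ (by classical Krieger-type constructions); Zimmer nonamenability of the induced $G$-action is automatic, since $\F_2 \actson Y$ is a free action of a nonamenable discrete group. For (2), a Bernoulli shift $\F_2 \actson (K, m_\lambda)^{\F_2}$ with appropriate non-uniform base measure yields an essentially free, weakly mixing, strongly ergodic action of type III$_\lambda$ for any $\lambda \in (0,1]$. For (3), a standard construction (for instance a Maharam-type product built from a non-pmp ergodic $\F_2$-action) produces an essentially free, strongly ergodic $\F_2$-action with no equivalent invariant $\sigma$-finite measure. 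In all three cases, weak mixing of the induced $G$-action follows by combining weak mixing of $\F_2 \actson Y$ with part~2 of Theorem~\ref{thm.main-ME-embedding}.

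Strong ergodicity of $G \actson X$ in (2) and (3) is deduced from part~3 of Theorem~\ref{thm.main-ME-embedding}, applied on the $\F_2$-side: since $\F_2 \actson D$ is strongly ergodic pmp, the diagonal action $\F_2 \actson Y \times D$ remains strongly ergodic, and strong ergodicity descends to the Morita-equivalent $G \actson X$ because strong ergodicity is an invariant of stable orbit equivalence of the underlying equivalence relation. The main technical obstacle is precisely this last transfer: part~3 of Theorem~\ref{thm.main-ME-embedding} is formulated for pmp $G$-actions on the target side rather than for nonsingular $\F_2$-actions on the source side, so one has to carefully justify the dual statement, either by translating stable strong ergodicity through the ME coupling, or by arguing directly at the level of the orbit equivalence relation of $\F_2 \actson Y \times D$, using strong ergodicity of the pmp factor $\F_2 \actson D$ to rule out asymptotically invariant sequences arising from the nonsingular factor $\F_2 \actson Y$.
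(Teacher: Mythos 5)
Your overall philosophy (induce through the explicit ME coupling, then transfer properties via Proposition \ref{prop.stability-of-strong-ergodicity-under-correspondence}) matches the paper's, but your implementation has a genuine gap at exactly the point you flag, and the paper resolves it by a different design rather than by proving your ``dual statement''. In your setup the \emph{same} copy of $\F_2$ acts diagonally on the nonsingular space $Y$ and on the Bernoulli space $X=\Om/G$, so you need: for every strongly ergodic \emph{nonsingular} $\F_2 \actson Y$, the diagonal $\F_2 \actson Y \times X$ is strongly ergodic (and similarly ergodicity of $\F_2 \actson \widetilde{Y} \times X$ for the Maharam extension, which your phrase ``the pmp factor does not affect the flow of weights'' silently assumes). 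Neither statement follows from part~3 of Theorem \ref{thm.main-ME-embedding}, which only controls diagonal products with pmp $G$-actions, and the natural spectral-gap argument breaks: a bounded almost invariant sequence for a nonsingular action is \emph{not} an almost invariant sequence of vectors for the Koopman representation, because of the Radon--Nikodym factors, so Lemma \ref{lem.random-strongly-ergodic} does not apply to the $Y$-direction. The paper avoids the diagonal action altogether: it enlarges the group to $\Gamma=\F_{k+2}=\Lambda_1 \ast \Lambda_2$ (Theorem \ref{thm.very-general-actions}), puts the nonsingular action $U$ on $\Lambda_1$ only and the Bernoulli cocycle on all generators, so that $\Lambda_2$ acts \emph{trivially} on $U$; then $U$ is a spectator factor and criterion 3' of Lemma \ref{lem.criteria-strongly-ergodic}, together with Propositions \ref{prop.skew-ergodicity} and \ref{prop.skew-strong-ergodicity} applied to $\Lambda_2$, yields ergodicity, strong ergodicity and the flow computation, with $\Lambda_1 \actson U$ handled separately afterwards. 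Your proposal would need a new multiplier-type theorem for diagonal products over the same $\F_2$, which is nowhere established.

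There is a second, independent gap in your treatment of statement~3. Nonexistence of an equivalent invariant $\si$-finite measure is governed by the \emph{associated (Krieger) flow} of $G \actson X$, not by the flow of weights of the crossed product, and for nonunimodular $G$ these differ: by Section \ref{sec.flows}, the associated flow of the induced action is built from the cocycle $\om_Y(g,y)-\log\Delta_G(\al(g,x))$, so with the pmp Bernoulli coupling of Theorem \ref{thm.main-ME-embedding} the term $\log\Delta_G(\al(g,x))$ twists the flow uncontrollably; transferring a type~III $\F_2$-action only pins down the flow of weights and does not rule out an invariant measure (a pmp action of a nonunimodular $G$ already has type~III crossed product). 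The paper devotes Theorem \ref{thm.very-general-actions-associated-flow} precisely to this: it replaces the coupling by a \emph{nonsingular} Bernoulli action engineered so that $\om_X(g,x)=\log\Delta(\al(g,x))$ cancels the twist, with generators of $\Lambda_1,\Lambda_2$ taking values in $\Ker\Delta$ and one extra free generator $a$ whose cocycle has full support, needed to kill the remaining invariants $L^\infty(Z)^{G_0}$. Finally, a smaller error: Zimmer nonamenability is \emph{not} automatic for essentially free actions of nonamenable groups (the boundary action of $\F_2$ is essentially free and amenable); the paper deduces nonamenability from the fact that the restriction to $\Lambda_2$ is pmp, and in your setting you would have to argue via the pmp Bernoulli factor instead.
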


A more detailed formulation of this result can be found in Section \ref{sec.proof-C}.

\section{Preliminaries}

\subsection{Nonsingular actions and weak mixing}

Recall that a \emph{nonsingular} action of an lcsc group $G$ on a standard probability space $(X,\mu)$ is an action of $G$ on the set $X$ such that the map $G \times X \to X : (g,x) \mapsto g\cdot x$ is Borel and such that $\mu(g \cdot U) = 0$ whenever $U \subset X$ is Borel, $\mu(U) = 0$ and $g \in G$.

Recall that such a nonsingular action $G \actson (X,\mu)$ is called \emph{ergodic} if every $G$-invariant Borel set $U \subset X$ satisfies $\mu(U)=0$ or $\mu(X \setminus U) = 0$. Recall that $G \actson (X,\mu)$ is called \emph{weakly mixing} if for every ergodic, probability measure preserving (pmp) action $G \actson (Y,\eta)$, the product action $G \actson (X \times Y,\mu \times \eta)$ given by $g \cdot (x,y) = (g \cdot x,g \cdot y)$ is ergodic.

Finally recall that a pmp action $G \actson (X,\mu)$ is called \emph{mixing} if for all Borel sets $U, V \subset X$ and $\eps > 0$, the set $\bigl\{g \in G \bigm| |\mu(U \cap g \cdot V) - \mu(U) \, \mu(V)| \geq \eps \bigr\}$ is compact.

\subsection{Correspondences and measure equivalence embeddings}\label{subsec.ME}

Recall that a nonsingular action $H \actson (\Omega,\gamma)$ is dissipative and essentially free iff there exists a standard probability space $(Y,\eta)$ and a nonsingular isomorphism $\theta :  (H \times Y,\lambda \times \eta) \to (\Omega,\gamma)$, where $\lambda$ is a left Haar measure on $H$, such that for every $h \in H$, we have $\theta(hk,y) = h \cdot \theta(k,y)$ for a.e.\ $(k,y) \in H \times Y$. Then the quotient space $Y = H \backslash \Omega$ is well defined.

The notion of \emph{measure equivalence} for countable groups was introduced by Gromov in \cite{Gro91}. The following generalization to locally compact groups, as well as to measure equivalence embeddings, is taken from \cite[Definitions 3.1 and 3.5]{DL14}.

\begin{definition}\label{def.ME-subgroup}
Let $H$ and $G$ be lcsc groups. A \emph{correspondence} between $H$ and $G$ is a nonsingular action $H \times G \actson (\Omega,\gamma)$ on a standard probability space such that the actions $H\actson \Omega$ and $G\actson \Omega$ are both dissipative and essentially free.

Such a correspondence is said to be a \emph{measure equivalence embedding} of $H$ into $G$ if the nonsingular action $H \actson \Omega / G$ admits an equivalent $H$-invariant probability measure. It is said to be a \emph{measure equivalence} if both $\Omega / G$ and $H \backslash \Omega$ admit equivalent invariant probability measures.
\end{definition}

\subsection{Flow of weights and associated flow}\label{sec.flows}

Let $G$ be an lcsc group and $G \actson (Y,\eta)$ a nonsingular, ergodic, essentially free action. Denote by
$$\om : G \times Y \to \R : \om(g,y) = \log \frac{d(g^{-1} \cdot \eta)}{d\eta}(y)$$
the logarithm of the Radon-Nikodym cocycle and denote by $\Delta : G \to (0,+\infty)$ the modular function of $G$. Then the \emph{flow of weights} of the crossed product factor $L^\infty(Y) \rtimes G$ is given by the action of $\R$ by translation in the second variable on the ergodic decomposition of
$$G \actson Y \times \R : g \cdot (y,s) = (g \cdot y, \om(g,y) + \log \Delta(g) + s) \; .$$
So, if $G$ is nonunimodular, the flow of weights of $L^\infty(Y) \rtimes G$ need not be isomorphic to \emph{Krieger's associated flow} of $G \actson Y$, which is defined as the action of $\R$ by translation in the second variable on the ergodic decomposition of
$$G \actson Y \times \R : g \cdot (y,s) = (g \cdot y, \om(g,y) + s) \; .$$

In particular, whenever $G$ is nonunimodular and $G \actson (Y,\eta)$ is an essentially free action, pmp action such that $\Ker \Delta$ acts ergodically (e.g.\ because the action $G \actson (Y,\eta)$ is mixing), it follows that $L^\infty(Y) \rtimes G$ is always of type III: of type III$_1$ if $\Delta(G) \subset \R^*_+$ is dense and of type III$_\lambda$ if $\Delta(G) = \lambda^\Z$ and $\lambda \in (0,1)$. On the other hand, Krieger's associated flow for $G \actson (Y,\eta)$ is given by the translation action $\R \actson \R$.

Although for nonunimodular groups, the associated flow is not an orbit equivalence or von Neumann equivalence invariant, it is an interesting conjugacy invariant for the nonsingular action $G \actson (Y,\eta)$. In particular, if $G \actson (Y,\eta)$ is an ergodic nonsingular action, this associated flow is isomorphic with the translation flow $\R \actson \R$ if and only if there exists an equivalent $\si$-finite measure $\eta' \sim \eta$ that is $G$-invariant.

Note that if $H \times G \actson (\Omega,\gamma)$ is an essentially free, ergodic correspondence, then the actions $H \actson \Omega / G$ and $G \actson H \backslash \Omega$ and $H \times G \actson \Omega$ have isomorphic flows of weights.

On the other hand, writing $\Omega = X \times G$ where $X = \Omega / G$ and $\al : H \times X \to G$ is the $1$-cocycle such that the action $H \times G \actson \Omega = X \times G$ is given by $(h,g) \cdot (x,k) = (h \cdot x,\al(h,x) k g^{-1})$, the associated flow of $G \actson H \backslash \Omega$ is isomorphic with the action of $\R$ by translation in the second variable on the ergodic decomposition of
$$H \actson X \times \R : h \cdot (x,s) = (h \cdot x, \om_X(h,x) - \log \Delta_G(\al(h,x)) + \log \Delta_H(h) + s) \; ,$$
where $\om_X : H \times X \to \R$ is again the logarithm of the Radon-Nikodym cocycle.

\subsection{Strong ergodicity}

For completeness, we include a few well known results about strong ergodicity of nonsingular actions of lcsc groups, for which we could not find a good reference.

Recall that a nonsingular action $G \actson^\sigma (Y,\eta)$ of an lcsc group $G$ on a standard probability space $(Y,\eta)$ is called strongly ergodic if every bounded sequence $F_n \in L^\infty(Y)$ satisfying $\|F_n \circ \sigma_g - F_n \|_1 \to 0$ uniformly on compact subsets of $G$, satisfies $\|F_n - \eta(F_n)1\|_1 \to 0$. Since we are only dealing with bounded sequences, convergence in $\|\cdot\|_1$ is the same as convergence in $\|\cdot\|_2$ and is the same as convergence in measure.

\begin{lemma}\label{lem.criteria-strongly-ergodic}
Let $G \actson (Y,\eta)$ be a nonsingular action of an lcsc group $G$ on a standard probability space $(Y,\eta)$. Then the following statements are equivalent.
\begin{enumlist}[labelwidth=3ex,labelsep=0.5ex,leftmargin=3.5ex]
\item The action $G \actson (Y,\eta)$ is strongly ergodic.
\item If $F_n \in L^\infty(Y)$ is a bounded sequence and $F_n(g \cdot y) - F_n(y) \to 0$ for a.e.\ $(g,y) \in G \times Y$, then $\|F_n - \eta(F_n) 1\|_1 \to 0$.
\item[2'.] If $F_n \in L^\infty(Y)$ is a bounded sequence and $F_n(g \cdot y) - F_n(y) \to 0$ for a.e.\ $(g,y) \in G \times Y$, there exists a subsequence $(n_k)$ and a bounded sequence $t_k \in \C$ such that $F_{n_k}(y) - t_k \to 0$ for a.e.\ $y \in Y$.
\item If $(X,\mu)$ is a standard probability space, $F_n \in L^\infty(X \times Y)$ is a bounded sequence and $F_n(x,g\cdot y) - F_n(x,y) \to 0$ for a.e.\ $(g,x,y) \in G \times X \times Y$, then $\|F_n - (\id \ot \eta)(F_n) \ot 1\|_1 \to 0$.
\item[3'.] If $(X,\mu)$ is a standard probability space, $F_n \in L^\infty(X \times Y)$ is a bounded sequence and $F_n(x,g\cdot y) - F_n(x,y) \to 0$ for a.e.\ $(g,x,y) \in G \times X \times Y$, there exists a subsequence $(n_k)$ and a bounded sequence $H_k \in L^\infty(X)$ such that $F_{n_k}(x,y) - H_k(x) \to 0$ for a.e.\ $(x,y) \in X \times Y$.
\end{enumlist}
\end{lemma}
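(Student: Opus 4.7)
I would dispatch the routine equivalences first. For $(2)\Leftrightarrow(2')$ and $(3)\Leftrightarrow(3')$, the standard subsequence template applies: the forward direction extracts an a.e.\ convergent subsequence after using the $L^1$-conclusion, with $t_k=\eta(F_{n_k})$, respectively $H_k(x)=(\id\ot\eta)(F_{n_k})(x)$. The reverse direction is by contradiction: if $L^1$-convergence fails, pick a subsequence bounded below, apply the subsequence version to get $F_{n_{k_\ell}}(y)-t_\ell\to 0$ a.e., use bounded convergence to derive $\eta(F_{n_{k_\ell}})-t_\ell\to 0$, and conclude $L^1$-convergence along this subsequence, contradicting the lower bound. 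For $(2)\Leftrightarrow(3)$: the direction $(3)\Rightarrow(2)$ is immediate by taking $X$ a singleton; conversely, Fubini shows that for $\mu$-a.e.\ $x$ the sequence $y\mapsto F_n(x,y)$ satisfies the hypothesis of $(2)$, and dominated convergence on $X$ (using the uniform $L^\infty$-bound) lifts the slicewise conclusion to $(3)$.

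For $(2)\Rightarrow(1)$, argue by contradiction: given $F_n$ with $\phi_n(g):=\|F_n\circ\sigma_g-F_n\|_1\to 0$ uniformly on compact subsets of $G$ but $\|F_n-\eta(F_n)\|_1\not\to 0$, pass to a subsequence $F_{n_k}$ bounded below in $L^1$-distance from its mean. Exhaust $G$ by an increasing family of compact sets $K_j$; the uniform decay of $\phi_{n_k}$ on $K_j$ gives $\int_{K_j}\phi_{n_k}\,d\lambda\to 0$, so $F_{n_k}(gy)-F_{n_k}(y)\to 0$ in $L^1(\lambda_{K_j}\times\eta)$ and thus in measure on $K_j\times Y$. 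A diagonal extraction then produces a further subsequence converging a.e.\ on $G\times Y$, and $(2)$ delivers the contradiction.

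The main direction is $(1)\Rightarrow(2)$. The plan is to reduce to the hypothesis of $(1)$ by convolving $F_n$ with a probability measure $\nu=\psi\,d\lambda$ on $G$ whose density $\psi$ is bounded with compact support $S$. Setting $\widetilde{F}_n(y):=\int F_n(gy)\,d\nu(g)$, a change of variables gives $\widetilde{F}_n(hy)=\int F_n(gy)\,d\nu^h(g)$, where $\nu^h$ is the right translate of $\nu$ by $h$, still a probability measure with density supported in $Sh$. Using the identity $\int(d\nu^h-d\nu)=0$ to subtract the constant $F_n(y)$ yields
\[
\widetilde{F}_n(hy)-\widetilde{F}_n(y) \;=\; \int_G \bigl[F_n(gy)-F_n(y)\bigr]\,d(\nu^h-\nu)(g),
\]
whence $\|\widetilde{F}_n\circ\sigma_h-\widetilde{F}_n\|_1\leq\int_G\phi_n\,d|\nu^h-\nu|$. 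For $h$ in a compact $K\subset G$, the signed measure $\nu^h-\nu$ has density supported in $SK\cup S$ with $L^\infty$-bound depending only on $K$ and $\psi$, so the integral is dominated by a constant $C_K$ times $\int_{SK\cup S}\phi_n\,d\lambda$. The hypothesis of $(2)$ combined with Fubini and bounded convergence gives $\phi_n\to 0$ a.e.\ on $G$, so dominated convergence on the compact $SK\cup S$ forces this integral to $0$; thus $\widetilde{F}_n$ is asymptotically invariant uniformly on compacts of $G$. Applying $(1)$ yields $\|\widetilde{F}_n-\eta(\widetilde{F}_n)\|_1\to 0$, and the remaining bound $\|F_n-\widetilde{F}_n\|_1\leq\int_G\phi_n\,d\nu\to 0$ (again by dominated convergence, as $\nu\ll\lambda$) closes the proof via the triangle inequality. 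The decisive ingredient — and the main obstacle to a direct argument — is the cancellation $\int(d\nu^h-d\nu)=0$, which allows one to replace $F_n(gy)$ by $F_n(gy)-F_n(y)$ inside the estimate and thereby import the a.e.-smallness of $\phi_n$ uniformly over $h\in K$.
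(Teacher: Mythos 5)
Your proposal is correct and takes essentially the same route as the paper: the key direction $(1)\Rightarrow(2)$ is the same mollification argument, the paper's choice being $\nu$ uniform on a compact neighborhood $K$ of $e$, with its equicontinuity estimate $\|H_n \circ \sigma_{g_1} - H_n \circ \sigma_{g_2}\|_1 \leq C\,\lambda(K)^{-1}\,\lambda(g_1^{-1}K \vartriangle g_2^{-1}K)$ playing exactly the role of your total-variation bound on $\nu^h-\nu$, and with $\|H_n-F_n\|_1\to 0$ closing the argument just as your $\|F_n-\widetilde{F}_n\|_1\le\int_G\phi_n\,d\nu\to 0$ does. The remaining implications also match the paper's proof: Fubini plus dominated convergence for $2\Rightarrow 3$, a singleton $X$ for $3\Rightarrow 2$, extraction of an a.e.\ convergent subsequence (the paper integrates against a probability measure equivalent to Haar, you diagonalize over a compact exhaustion) for $2\Rightarrow 1$, and the standard subsequence principle for $2\Leftrightarrow 2'$ and $3\Leftrightarrow 3'$.
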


\begin{proof}
$1 \Rightarrow 2$.\ Assume that $G \actson^\si (Y,\eta)$ is strongly ergodic. Let $F_n \in L^\infty(Y)$ be a bounded sequence such that $F_n(g\cdot y) - F_n(y) \to 0$ for a.e.\ $(g,y) \in G \times Y$. Fix a compact neighborhood $K$ of $e$ in $G$. Fix a left Haar measure $\lambda$ on $G$. Define the bounded sequence $H_n \in L^\infty(Y)$ by
$$H_n(y) = \lambda(K)^{-1} \int_K F_n(h^{-1} \cdot y) \, d\lambda(h) \; .$$
For every $g \in G$, we have that
$$\|H_n \circ g - F_n\|_1 \leq \lambda(K)^{-1} \int_{K \times Y} |F_n(h^{-1}g \cdot y) - F_n(y)| \, d(\lambda \times \eta)(h,y) \; .$$
It follows from dominated convergence that $\|H_n \circ \si_g - F_n\|_1 \to 0$ for all $g \in G$. This holds in particular when $g = e$, so that $\|H_n - F_n\|_1 \to 0$ and $\|H_n \circ \si_g - H_n\|_1 \to 0$ for every $g \in G$. Note that
$$H_n(g \cdot y) = \lambda(K)^{-1} \int_{g^{-1}K} F_n(h^{-1} \cdot y) \, d\lambda(h) \; .$$
Taking $C > 0$ such that $\|F_n\|_\infty \leq C$ for all $n$, it follows that
$$\|H_n \circ \si_{g_1} - H_n \circ \si_{g_2}\|_1 \leq C \, \lambda(K)^{-1} \, \lambda(g_1^{-1} K \vartriangle g_2^{-1} K) \quad\text{for all $g_1,g_2 \in G$.}$$
We already proved that $\|H_n \circ \si_g - H_n\|_1 \to 0$ for every $g \in G$, so that $\|H_n \circ \si_g - H_n \|_1 \to 0$ uniformly on compact subsets of $G$. Since $G \actson (Y,\eta)$ is strongly ergodic, $\|H_n - \eta(H_n)1 \|_1 \to 0$. Since $\|H_n - F_n \|_1 \to 0$, we also get that $\|F_n - \eta(F_n)1\|_1 \to 0$.

$2 \Rightarrow 3$.\ Take a standard probability space $(X,\mu)$ and a bounded sequence $F_n \in L^\infty(X \times Y)$ such that $F_n(x,g\cdot y) - F_n(x,y) \to 0$ for a.e.\ $(g,x,y) \in G \times X \times Y$. By the Fubini theorem, we have that for a.e.\ $x \in X$, the sequence $F_n(x,\cdot)$ satisfies the assumptions of 2. Therefore, for a.e.\ $x \in X$, we have that $\|F_n(x,\cdot) - \eta(F_n(x,\cdot))1\|_1 \to 0$. Integrating over $x \in X$, it follows from dominated convergence that $\|F_n - (\id \ot \eta)(F_n) \ot 1\|_1 \to 0$.

$3 \Rightarrow 2$ is trivial by taking for $X$ a one point set.

$2 \Rightarrow 1$. Assume that $G \actson^\si (Y,\eta)$ is not strongly ergodic. We then find a bounded sequence $F_n \in L^\infty(Y)$ and a $\delta > 0$ such that $\|F_n \circ \si_g - F_n\|_1 \to 0$ uniformly on compact subsets of $G$ and $\|F_n - \eta(F_n)1\|_1 \geq \delta$ for all $n \in \N$. Choose a probability measure $\nu$ on $G$ that is equivalent with the Haar measure. Then,
$$\int_{G \times Y} |F_n(g \cdot y) - F_n(y)| \, d(\nu \times \eta)(g,y) = \int_G \|F_n \circ \si_g - F_n \|_1 \, d\nu(g) \to 0 \; .$$
We can thus choose a subsequence $(n_k)$ such that $F_{n_k}(g \cdot y) - F_{n_k}(y) \to 0$ for a.e.\ $(g,y) \in G \times Y$. Since $\|F_{n_k} - \eta(F_{n_k})1\|_1 \geq \delta$ for all $k$, we conclude that $2$ does not hold.

$2 \Leftrightarrow 2'$ and $3 \Leftrightarrow 3'$ follow immediately because any sequence converging to $0$ in measure admits a subsequence converging to $0$ a.e.
\end{proof}

The following lemma was already proven in e.g.\ \cite[Lemma 2.5]{Ioa14}. Using the previous lemma, we can give a very short argument, which we include for completeness.

\begin{lemma}\label{lem.strongly-ergodic-translation-action}
Let $G$ be an lcsc group and let $\eta$ be a probability measure on $G$ that is equivalent with the Haar measure. The translation action $G \actson (G,\eta)$ is strongly ergodic.
\end{lemma}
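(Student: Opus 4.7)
The plan is to verify condition~2 of Lemma~\ref{lem.criteria-strongly-ergodic}, which is the cleanest of the equivalent criteria for strong ergodicity in this situation. So I would take a bounded sequence $F_n \in L^\infty(G,\eta)$ satisfying $F_n(gy) - F_n(y) \to 0$ for almost every $(g,y) \in G \times G$ with respect to $\lambda \times \eta$ (equivalently, $\lambda \times \lambda$, since $\eta \sim \lambda$), and aim to show $\|F_n - \eta(F_n)1\|_1 \to 0$.

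The key observation is that on the translation action, the orbit of a single point already sweeps out the whole space in a nonsingular way. Concretely, by Fubini there exists a single point $y_0 \in G$ such that $F_n(gy_0) - F_n(y_0) \to 0$ for Haar-a.e.\ $g \in G$. Now I would make the substitution $h = g y_0$: since right multiplication by $y_0$ is a nonsingular (indeed measure-scaling) self-map of $(G,\lambda)$, null sets go to null sets, so we obtain $F_n(h) - F_n(y_0) \to 0$ for Haar-a.e.\ $h \in G$, hence for $\eta$-a.e.\ $h$. Setting $t_n := F_n(y_0) \in \C$, which is bounded, dominated convergence gives $\|F_n - t_n\|_1 \to 0$, so that $|\eta(F_n) - t_n| \to 0$ and therefore $\|F_n - \eta(F_n)1\|_1 \to 0$, as required.

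There is no real obstacle here: the whole proof is a one-line substitution once one invokes the right criterion from Lemma~\ref{lem.criteria-strongly-ergodic}, and the only mild subtlety is to make sure one is allowed to pick a pointwise-good $y_0$ (Fubini) and to transfer the resulting a.e.\ statement through the nonsingular map $h \mapsto h y_0^{-1}$. The latter is precisely where the argument uses that we are dealing with the translation action on the group itself, rather than an arbitrary nonsingular action. Had we set up criterion~$2'$ instead, the argument would be essentially the same with $t_{n_k}$ replaced by a subsequence.
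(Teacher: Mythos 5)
Your proof is correct and essentially identical to the paper's: both reduce to criterion 2/2' of Lemma~\ref{lem.criteria-strongly-ergodic} and exploit nonsingularity of translation to turn the a.e.\ cocycle condition into $F_n(h) - t_n \to 0$ for a.e.\ $h$. The only cosmetic difference is order of operations --- the paper first applies the nonsingular map $(g,h) \mapsto (gh,h)$ on $G \times G$ and then uses Fubini to fix $h$, while you fix $y_0$ by Fubini first and then translate by $y_0$ --- which is the same substitution.
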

\begin{proof}
Let $F_n \in L^\infty(G)$ be a bounded sequence such that $F_n(gh) - F_n(h) \to 0$ for a.e.\ $(g,h) \in G \times G$. Since the map $(g,h) \mapsto (gh,h)$ is nonsingular, also $F_n(g) - F_n(h) \to 0$ for a.e.\ $(g,h) \in G \times G$. By the Fubini theorem, we can choose $h \in G$ such that $t_n = F_n(h)$ is a bounded sequence and $F_n(g) - t_n \to 0$ for a.e.\ $g \in G$. By Lemma \ref{lem.criteria-strongly-ergodic}, the action $G \actson (G,\eta)$ is strongly ergodic.
\end{proof}

\begin{proposition}\label{prop.stability-of-strong-ergodicity-under-correspondence}
Let $H$ and $G$ be lcsc groups and $H \times G \actson (\Om,\gamma)$ a correspondence between $H$ and $G$ (see Definition \ref{def.ME-subgroup}). Then $H \actson \Om/G$ is strongly ergodic iff $G \actson H \backslash \Om$ is strongly ergodic.
\end{proposition}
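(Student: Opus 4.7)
My plan is to exploit the symmetry of Definition~\ref{def.ME-subgroup}: assuming $G \actson Y := H\backslash\Om$ is strongly ergodic, I will show $H \actson X := \Om/G$ is strongly ergodic by verifying criterion~$2'$ of Lemma~\ref{lem.criteria-strongly-ergodic}. Given a bounded sequence $\phi_n \in L^\infty(X)$ with $\phi_n(h\cdot x) - \phi_n(x) \to 0$ for a.e.\ $(h,x) \in H\times X$, the target is a subsequence $n_j$ and bounded scalars $t_j$ with $\phi_{n_j}(x) - t_j \to 0$ a.e.\ on $X$. First I would pull $\phi_n$ back along $\Om \to X$ to $\tilde\phi_n \in L^\infty(\Om)$; this sequence is exactly $G$-invariant and still satisfies $\tilde\phi_n(h \cdot \om) - \tilde\phi_n(\om) \to 0$ a.e.\ on $H \times \Om$.

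Next, using dissipativity of $H \actson \Om$, I identify $\Om$ in its measure class with $H \times Y$ so that $H$ acts by left translation on the first factor, turning the almost-invariance into $\tilde\phi_n(hh', y) - \tilde\phi_n(h', y) \to 0$ a.e.\ on $H \times H \times Y$. Since the translation action $H \actson (H,\nu)$ for a probability measure $\nu$ equivalent to Haar is strongly ergodic by Lemma~\ref{lem.strongly-ergodic-translation-action}, applying Lemma~\ref{lem.criteria-strongly-ergodic}(3$'$) with $Y$ playing the role of the parameter space produces a subsequence $n_k$ and functions $G_k \in L^\infty(Y)$ with $\tilde\phi_{n_k}(h', y) - G_k(y) \to 0$ a.e.\ on $H \times Y$.

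The crucial step is to convert the exact $G$-invariance of $\tilde\phi_n$ into approximate $G$-invariance of $G_k$ on $Y$. Since the $G$-action on $H \times Y$ commutes with left $H$-translation, it necessarily has the form $g \cdot (h', y) = (h' \be(g,y)^{-1}, g\cdot y)$ for a measurable cocycle $\be : G \times Y \to H$ and the induced nonsingular $G$-action on $Y$. The map $(g, h', y) \mapsto (g, h' \be(g,y)^{-1}, g \cdot y)$ is a nonsingular automorphism of $G \times H \times Y$, so pulling the convergence $\tilde\phi_{n_k}(h', y) - G_k(y) \to 0$ through this change of variables and using the exact $G$-invariance of $\tilde\phi_{n_k}$ yields $\tilde\phi_{n_k}(h', y) - G_k(g \cdot y) \to 0$ a.e.\ on $G \times H \times Y$. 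Subtracting the original convergence and using Fubini eliminates both $h'$ and $\tilde\phi_{n_k}$, leaving $G_k(g \cdot y) - G_k(y) \to 0$ a.e.\ on $G \times Y$. Strong ergodicity of $G \actson Y$ and Lemma~\ref{lem.criteria-strongly-ergodic}(2$'$) then extract a sub-subsequence $k_j$ and scalars $t_j$ with $G_{k_j}(y) - t_j \to 0$ a.e., and combining with the earlier approximation gives $\tilde\phi_{n_{k_j}} \to t_j$ a.e.\ on $\Om$. Expressing this in the alternative decomposition $\Om \cong X \times G$, where $\tilde\phi_n(x,k) = \phi_n(x)$, and applying Fubini produces $\phi_{n_{k_j}}(x) - t_j \to 0$ a.e.\ on $X$, as desired.

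I expect the main obstacle to be the careful bookkeeping in the change-of-variables step: verifying that the map on $G \times H \times Y$ is indeed a nonsingular automorphism and, more subtly, that the exact $G$-invariance of $\tilde\phi_n$ together with the approximate identification $\tilde\phi_{n_k} \approx G_k$ can be combined on a single full-measure subset so that the subsequent subsequence extraction remains valid. The remaining steps are standard Fubini manipulations and repeated invocation of the equivalences in Lemma~\ref{lem.criteria-strongly-ergodic}.
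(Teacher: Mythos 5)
Your proof is correct and is essentially the paper's own argument in mirror image: after the same reduction by symmetry, you decompose $\Om$ as a product trivializing one of the actions, collapse the translation direction via Lemma \ref{lem.strongly-ergodic-translation-action} and criterion 3$'$ of Lemma \ref{lem.criteria-strongly-ergodic}, transfer the exact invariance of the lifted function through the cocycle by a nonsingular change of variables, and conclude with the strong ergodicity hypothesis and criterion 2$'$. The only differences are that you prove the implication from $G \actson H\backslash\Om$ to $H \actson \Om/G$ while the paper proves the reverse (immaterial, since both invoke symmetry), and that you spell out the cocycle form $g\cdot(h',y)=(h'\,\be(g,y)^{-1},\,g\cdot y)$ and the change-of-variables bookkeeping that the paper compresses into the single line ``Since $F_n$ is $H$-invariant, it follows that $S_n(h\cdot x)-S_n(x)\to 0$''.
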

\begin{proof}
By symmetry, it suffices to prove that $G \actson H \backslash \Omega$ is strongly ergodic assuming that $H \actson \Om / G$ is strongly ergodic.

We view $H$ as acting on the left and $G$ as acting on the right. We identify $\Om = X \times G$ in such a way that the right $G$-action is given by right translation in the second variable. We then identify $\Om / G = X$. Let $F_n \in L^\infty(H \backslash \Omega)$ be a bounded sequence such that $F_n(y \cdot g) - F_n(y) \to 0$ for a.e.\ $(g,y) \in G \times H \backslash \Omega$. We view $F_n$ as a bounded $H$-invariant sequence in $L^\infty(X \times G)$. Then, $F_n(x,kg) - F_n(x,k) \to 0$ for a.e.\ $(g,x,k) \in G \times X \times G$. By Lemmas \ref{lem.criteria-strongly-ergodic} and \ref{lem.strongly-ergodic-translation-action}, after passage to a subsequence, we find a bounded sequence $S_n \in L^\infty(X)$ such that $F_n(x,k) - S_n(x) \to 0$ for a.e.\ $(x,k) \in X \times G$.

Since $F_n$ is $H$-invariant, it follows that $S_n(h \cdot x) - S_n(x) \to 0$ for a.e.\ $(h,x) \in H \times X$. By strong ergodicity of $H \actson X$, after a further passage to a subsequence, we find that $S_n(x) - t_n \to 0$ for a.e.\ $x \in X$, where $t_n \in \C$ is a bounded sequence. So we have proven that $F_n(y) - t_n \to 0$ for a.e.\ $y \in H \backslash \Omega$. By Lemma \ref{lem.criteria-strongly-ergodic}, the strong ergodicity of $G \actson H \backslash \Omega$ follows.
\end{proof}

\section{Measure equivalence embeddings of the free groups}

In this section, we prove Theorem \ref{thm.main-ME-embedding}, as well as the following variant.

\begin{theorem}\label{thm.ME-embedding-nonamenable}
Let $G$ be an lcsc group. Then the following statements are equivalent.
\begin{enumlist}
\item $G$ is nonamenable.
\item There exists a measure equivalence embedding of $\F_2$ into $G$.
\item There exists a measure equivalence embedding $\F_2 \times G \actson \Omega$ of $\F_2$ into $G$ such that the associated action $G \actson Y = \F_2 \backslash \Omega$ is essentially free, weakly mixing, strongly ergodic and stably strongly ergodic: for every strongly ergodic pmp action $G \actson (Z,\zeta)$, the diagonal action $G \actson Y \times Z$ remains strongly ergodic.
\end{enumlist}
\end{theorem}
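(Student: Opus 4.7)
The plan is to prove the chain $3 \Rightarrow 2 \Rightarrow 1 \Rightarrow 3$. The first implication is tautological, since an ME embedding satisfying the extra properties of statement 3 is in particular an ME embedding.

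For $2 \Rightarrow 1$, I would argue by contrapositive, relying on the standard ME-heredity of amenability for lcsc groups (the generalization of Gromov's classical observation for countable groups). Suppose $G$ is amenable and that an ME embedding $\F_2 \times G \actson (\Om,\gamma)$ exists. Since every action of an amenable lcsc group is amenable in Zimmer's sense, the action $G \actson \F_2\backslash\Om$ is amenable; transferring across the ME correspondence, the free pmp action $\F_2 \actson \Om/G$ is amenable too. But a free pmp action of a nonamenable group cannot be amenable, as the corresponding crossed product would otherwise be an amenable von Neumann algebra containing $L(\F_2)$ as a subalgebra, forcing $\F_2$ to be amenable. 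This contradiction yields the nonamenability of $G$.

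For the main implication $1 \Rightarrow 3$, I would invoke Theorem \ref{thm.main-ME-embedding} directly. The only preliminary step is to produce a probability measure $\nu$ on $G$, equivalent to the Haar measure, with $\|\lambda(\nu)\| < 1/3$. As already indicated in the paragraph following Theorem \ref{thm.main-ME-embedding}, this is done by fixing any full-support probability measure $\nu_0$ equivalent to Haar, applying Kesten's theorem in the form of \cite[Th\'{e}or\`{e}me 4]{BC73} to get $\|\lambda(\nu_0)\| < 1$, and then setting $\nu = \nu_0^{\ast k}$ for $k$ so large that $\|\lambda(\nu_0)\|^k < 1/3$. Plugging this $\nu$ into Theorem \ref{thm.main-ME-embedding} delivers an ME embedding $\F_2 \times G \actson \Om$ whose induced $G$-action on $Y = \F_2\backslash\Om$ is automatically essentially free and weakly mixing (part 2), as well as strongly ergodic and stably strongly ergodic (part 3); these are exactly the conditions of statement 3.

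In summary, the substantive mathematics sits entirely in Theorem \ref{thm.main-ME-embedding}, and the only potentially delicate point is the lcsc form of amenability-heredity used in $2 \Rightarrow 1$, which is classical. I do not expect any serious obstacle; the proof is really just a packaging of Theorem \ref{thm.main-ME-embedding} together with a standard Kesten-type argument.
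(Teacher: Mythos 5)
Your proposal is correct and follows essentially the same route as the paper: $3 \Rightarrow 2$ is trivial, and $1 \Rightarrow 3$ is obtained exactly as in the paper by applying \cite[Th\'{e}or\`{e}me 4]{BC73} to a full-support measure equivalent with Haar, taking a convolution power to force $\|\lambda(\nu)\| < 1/3$, and invoking Theorem \ref{thm.main-ME-embedding}. The only cosmetic difference is in $2 \Rightarrow 1$: the paper simply cites the remark below \cite[Theorem B]{GM15}, whereas you sketch the standard amenability-transfer argument across the correspondence (Zimmer-amenability of $G \actson \F_2 \backslash \Omega$, Morita invariance, and the impossibility of an amenable pmp action of $\F_2$), which is a sound substitute for that citation.
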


So nonamenability of an lcsc group $G$ is precisely characterized by the existence of a measure equivalence embedding of $\F_2$. Similarly, noncompactness of $G$ is precisely characterized by the existence of a measure equivalence embedding of $\Z$. Moreover, this ME embedding can be chosen to be weakly mixing. For countable groups $G$, this was proven in \cite[Theorem 6.1]{BN13}. For completeness, we include a proof in the general locally compact setting.

\begin{theorem}\label{thm.ME-embedding-noncompact}
Let $G$ be an lcsc group. Then the following statements are equivalent.
\begin{enumlist}
\item $G$ is noncompact.
\item There exists a measure equivalence embedding of $\Z$ into $G$.
\item There exists a measure equivalence embedding $\Z \times G \actson \Omega$ of $\Z$ into $G$ such that the associated action $G \actson Y = \Z \backslash \Omega$ is essentially free and weakly mixing.
\end{enumlist}
\end{theorem}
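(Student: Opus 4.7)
My plan is to follow the structure of Theorem \ref{thm.ME-embedding-nonamenable}: the implication $(3) \Rightarrow (2)$ is trivial, while $(2) \Rightarrow (1)$ is a short contrapositive and $(1) \Rightarrow (3)$ is the main content. For $(2) \Rightarrow (1)$, suppose $G$ is compact, so its Haar measure $\lambda_G$ is a bi-invariant probability measure. Given an ME embedding $\Z \times G \actson (\Om,\gamma)$, identify $\Om$ nonsingularly with $X \times G$ so that $G$ acts by right translation and $X = \Om/G$ carries the $\Z$-invariant probability measure $\mu$ supplied by the finite covolume condition. The $\Z$-action takes the skew form $n \cdot (x,g) = (n \cdot x, \al(n,x) g)$, and left-invariance of $\lambda_G$ makes $\mu \times \lambda_G$ a $\Z$-invariant probability measure on $\Om$ equivalent to $\gamma$. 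But an essentially free dissipative action of the infinite group $\Z$ on a probability space cannot preserve the measure, since the disjoint translates of a fundamental domain would each carry equal positive mass and together exceed the total mass $1$, contradicting dissipativity of $\Z \actson \Om$.

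For $(1) \Rightarrow (3)$, I adapt the Bernoulli construction of Theorem \ref{thm.main-ME-embedding}, with $\F_2$ replaced by $\Z$ and a single coordinate at each site. Choose a probability measure $\nu$ on $G$ equivalent to the Haar measure (with an additional transience property to be specified), form the Bernoulli space $(X,\mu) = (G,\nu)^{\Z}$ with shift $\sigma$, and on $(\Om,\gamma) = (X \times G, \mu \times \nu)$ define the commuting actions
\[
    1 \cdot (x,h) = (\sigma x, x_0 h), \qquad g \cdot (x,h) = (x, h g^{-1}).
\]
Then $G$ acts essentially freely and dissipatively with $\Om/G = X$, on which $\Z$ acts as the pmp Bernoulli shift, establishing the finite covolume condition with the Bernoulli measure $\mu$. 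Essential freeness of $\Z \actson \Om$ is immediate from essential freeness of the shift. The nontrivial point is dissipativity of $\Z \actson \Om$: passing to the equivalent $\sigma$-finite measure $\mu \times \lambda_G$ on $\Om$, the $\Z$-action becomes measure preserving, and its dissipativity is equivalent to transience of the $G$-valued random walk $c(n,x) = x_{n-1} x_{n-2} \cdots x_0$ with step distribution $\nu$.

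The key point — and the only place where noncompactness of $G$ is used — is therefore the existence of a probability measure $\nu$ on $G$, equivalent to Haar, whose convolution powers satisfy $\sum_{n \geq 0} \nu^{\ast n}(K) < \infty$ for every compact $K \subset G$. I expect this to be the main obstacle, since the spectral gap method of Theorem \ref{thm.main-ME-embedding} is unavailable when $G$ is amenable, and one must build a transient $\nu$ by hand; I would do this by exploiting a proper continuous length function on $G$ (produced from $\sigma$-compactness) to bias the walk to drift off to infinity. Once $\nu$ is in hand, essential freeness and weak mixing of $G \actson \Z \backslash \Om$ follow from the same Bernoulli-Fourier arguments as in the proof of Theorem \ref{thm.main-ME-embedding}(2), which rely only on independence of the Bernoulli coordinates and mixing of the shift on the index set — properties that hold for $\Z$ as well as for $\F_2$.
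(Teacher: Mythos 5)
Your overall architecture coincides with the paper's: $3\Rightarrow 2$ is trivial; $2\Rightarrow 1$ is the same contradiction (with the skew product form $n\cdot(x,g)=(n\cdot x,\al(n,x)g)$, compactness makes $\mu\times\lambda_G$ an invariant \emph{probability} measure, incompatible with an infinite group acting dissipatively with a fundamental domain); and $1\Rightarrow 3$ is the same Bernoulli construction $(X,\mu)=(G,\nu)^\Z$, $\Om=X\times G$, with the fundamental domain coming from transience exactly as in Proposition \ref{prop.ME-embedding} ($k=1$) and weak mixing from Kakutani's random ergodic theorem (Lemma \ref{lem.random-ergodic} / Proposition \ref{prop.skew-ergodicity}, which are stated for $k=1$ as well, so your appeal to them is legitimate). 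You also correctly isolate the crux: the existence of a probability measure $\nu\sim\lambda$ with $\sum_n\nu^{\ast n}(K)<\infty$ for all compact $K$.

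That crux is precisely where your proposal has a genuine gap, and your proposed fix would not work as stated. ``Biasing the walk to drift off to infinity via a proper length function'' is not a proof and runs into concrete obstructions: a length function is symmetric ($|g|=|g^{-1}|$), so it singles out no direction of escape; on $\Z$ or $\R$, any symmetric walk with finite variance is recurrent no matter how far outward you bias its support, so transience forces heavy tails, and proving transience of a heavy-tailed spread-out walk on a \emph{general} noncompact lcsc group (possibly not compactly generated, possibly admitting no nontrivial homomorphism to $\R$ along which to drift, e.g.\ infinite simple amenable groups) requires a real argument. The paper devotes a separate statement to this (Lemma \ref{lem.transient-random-walk}) and notes it could not locate a reference. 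Its proof splits into two cases: if $G$ is nonamenable, any $\nu\sim\lambda$ has $\|\lambda(\nu)\|<1$ by \cite{BC73} and the spectral estimate $\lambda(L)\,\nu^{\ast n}(K)\leq\|\lambda(\nu)\|^n\|1_L\|_2\|1_{KL}\|_2$ gives transience; if $G$ is amenable, it runs a Kaimanovich--Vershik-style construction \cite{KV83}: one inductively chooses symmetric compactly supported $\nu_n\sim\lambda|_{A_n}$ that are $\eps_n$-almost invariant under carefully chosen finite sets, sets $\nu=\sum_n t_n\nu_n$, and uses the almost invariance of $\nu^{\ast n}$ together with $|\cF_n|>\eps_n^{-1}$ disjoint translates $g^{-1}B_n$ to force $\nu^{\ast n}(B_n)\leq 5\eps_n$, which is summable. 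Note that amenability is used \emph{positively} here (to produce the almost invariant $\nu_n$), the opposite of your framing of amenability as the obstacle to be circumvented. Everything else in your proposal is sound, but without a correct proof of this lemma the amenable noncompact case of $1\Rightarrow 3$ is unproven.
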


All our measure equivalence embeddings arise from the following canonical construction of $1$-cocycles for the Bernoulli action of a free group. For the group of integers, this construction was introduced in \cite{BN13}. Let $(X_0,\mu_0)$ be a standard probability space and $\Gamma \cong \F_k$ the free group with $1 \leq k < \infty$ free generators $a_1,\ldots,a_k$. Let $\cG$ be a Polish group and, for every $i \in \{1,\ldots,k\}$, $\vphi_i : X_0 \to \cG$ a Borel map. Then we consider the unique $1$-cocycle for the Bernoulli action $\Gamma \actson (X,\mu) = (X_0,\mu_0)^\Gamma$ given by
\begin{equation}\label{eq.my-1-cocycle}
\al : \Gamma \times X \to \cG : \al(a_i,x) = \vphi_i(x_e) \quad\text{for all $i \in \{1,\ldots,k\}$ and $x \in X$.}
\end{equation}

\subsection{Random ergodic theorems}

The following lemma is Kakutani's random ergodic theorem \cite[Theorem 3]{Kak50}, for which we provide a simple proof.

\begin{lemma}\label{lem.random-ergodic}
Let $\cH$ be a separable Hilbert space, $(X_0,\mu_0)$ a standard probability space and $\vphi : X_0 \to \cU(\cH)$ a Borel map. Consider the Bernoulli shift $S$ on $(X,\mu) = (X_0,\mu_0)^\Z$ given by $S(x)_n = x_{n-1}$. Define the unitary $V \in \cU(L^2(X,\mu) \ot \cH)$ by $(V^* \xi)(x) = \vphi(x_0)^* \xi(S(x))$.

Let $G_0 \subset \cU(\cH)$ be the subgroup generated by the essential range of $\vphi$. Then, the $V$-invariant vectors are given by $(L^2(X,\mu) \ot \cH)^V = 1 \ot \cH^{G_0}$.
\end{lemma}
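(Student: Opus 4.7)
The easy inclusion $1\ot \cH^{G_0} \subset (L^2(X,\mu)\ot\cH)^V$ is immediate: for $v\in\cH^{G_0}$ we have $\vphi(y)^*v = v$ for $\mu_0$-a.e.\ $y$, so $(V^*(1\ot v))(x) = \vphi(x_0)^*v = v$. For the reverse inclusion, I would introduce the contraction $A := \int_{X_0}\vphi(y)^*\,d\mu_0(y) \in B(\cH)$ (with $\|A\|\le 1$), and first identify $\ker(I - A) = \cH^{G_0}$: if $Av = v$ then $\int \langle \vphi(y)^*v, v\rangle\,d\mu_0(y) = \|v\|^2$, and since $\Re\langle\vphi(y)^*v, v\rangle \le \|v\|^2$ pointwise with equality only when $\vphi(y)^*v = v$, this forces $\vphi(y)^*v = v$ for $\mu_0$-a.e.\ $y$, so $v\in\cH^{G_0}$. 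Writing $P_A$ for the orthogonal projection onto $\cH^{G_0}$, the fact that $\vphi(y)$ acts as the identity on $\cH^{G_0}$ for every $y$ yields the key identities $P_A\vphi(y)^* = P_A = \vphi(y)^*P_A$ for $\mu_0$-a.e.\ $y$.

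Next, define an operator $\Phi : L^2(X,\mu)\ot\cH \to L^2(X,\mu)\ot\cH$ by $\Phi(\xi) := 1\ot P_A\bar\xi$, where $\bar\xi := \int\xi\,d\mu \in \cH$. A direct check shows that $\Phi$ is an orthogonal projection onto $1\ot\cH^{G_0}$ and that its range lies in the $V$-invariants, so $\Phi \le P_V$, where $P_V$ is the orthogonal projection onto $(L^2(X,\mu)\ot\cH)^V$. Since $\Phi\le P_V$, the equality $P_V = \Phi$ is equivalent to the norm identity $\|P_V\xi\|^2 = \|P_A\bar\xi\|^2$ for every $\xi$. By the mean ergodic theorem for the unitary $V^*$, $\|P_V\xi\|^2 = \lim_N \tfrac{1}{N}\sum_{n=1}^N \langle V^{*n}\xi, \xi\rangle$, so the proof reduces to evaluating this Ces\`aro limit.

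I would verify the limit first on the building blocks $\xi = f(x_0)v$ with $f \in L^\infty(X_0)$ and $v \in \cH$. Expanding $(V^{*n}\xi)(x) = \vphi(x_0)^*\vphi(x_{-1})^*\cdots\vphi(x_{1-n})^*\,f(x_{-n})\,v$, the point is that for $n \ge 2$ the coordinate $x_{-n}$ is independent of the cocycle window $\{x_0, \ldots, x_{1-n}\}$, and the factorization $E[\vphi(x_{-1})^*\cdots\vphi(x_{1-n})^*] = A^{n-1}$ holds by independence of the coordinates. A direct computation then expresses $\langle V^{*n}\xi, \xi\rangle$ as an explicit bilinear expression in $A^{n-1}v$ and an auxiliary vector. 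The mean ergodic theorem for the contraction $A$ on $\cH$ gives $\tfrac{1}{N}\sum A^{n-1}v \to P_Av$, and applying the identity $\vphi(y)^*P_A = P_A$ to simplify the resulting inner product produces exactly $\|P_A\bar\xi\|^2$.

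The main obstacle is extending this computation to $\xi$ depending on a finite set $F \subset \Z$ of coordinates. When the shifted support $F - n$ overlaps the cocycle window $[1-n, 0]$, the same coordinate $x_j$ appears inside both $\xi(S^n x)$ and some $\vphi(x_j)^*$ factor, producing mixed integrals $\int g(y)\vphi(y)^*\,d\mu_0$ that do not factor as a scalar times $A$. Here the identity $P_A\vphi(y)^* = P_A$ is precisely what is needed to annihilate the correction terms in the Ces\`aro limit, leaving $\|P_A\bar\xi\|^2$. Both sides of the identity $\|P_V\xi\|^2 = \|P_A\bar\xi\|^2$ are bounded continuous quadratic forms in $\xi$ (the left via the projection $P_V$, the right via the continuous map $\xi\mapsto\bar\xi$), so density of cylindrical vectors in $L^2(X,\mu)\ot\cH$ extends it to all $\xi$, giving $P_V = \Phi$ and hence the lemma.
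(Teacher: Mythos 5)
Your proposal is correct, but it takes a genuinely different route from the paper's proof. You run the classical operator-averaging argument for Kakutani's theorem: introduce the barycenter contraction $A=\int_{X_0}\vphi(y)^*\,d\mu_0(y)$, identify $\ker(I-A)=\cH^{G_0}$ by the Cauchy--Schwarz equality case, and reduce the lemma, via the mean ergodic theorem for the unitary $V^*$, to the Ces\`aro identity $\lim_N \tfrac1N\sum_{n\leq N}\langle V^{*n}\xi,\xi\rangle=\|P_A\bar\xi\|^2$ on cylinder vectors. Your handling of the overlap between the shifted support $F-n$ and the cocycle window is the right one and does go through: for large $n$, independence (keeping the operator order) factorizes the expectation into a fixed left boundary block, a middle power $A^{n-r}$ with $r$ fixed, and a fixed right boundary block $w=\int U_R\,\xi(S^n\cdot)\,d\mu$ (independent of $n$ by shift-invariance of $\mu$); the absorption identities $P_A\vphi(y)^*=\vphi(y)^*P_A=P_A$ --- valid for a.e.\ $y$ since $\vphi(y)$ lies in the essential range almost everywhere and a unitary fixing $\cH^{G_0}$ pointwise commutes with $P_A$ --- then collapse both blocks, yielding $\langle P_A w,\bar\xi\rangle=\|P_A\bar\xi\|^2$, and continuity of both quadratic forms finishes by density. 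The paper instead avoids ergodic averages entirely: it views an invariant vector as an equivariant map $\xi\in L^2(X,\cH)$ with $\xi(n\cdot x)=\al(n,x)(\xi(x))$, duplicates one coordinate (the maps $\pi_k$, $\pi'_k$ on $X_0^{\Z\sqcup\{\ast\}}$), and uses that $\al(n,\cdot)$ depends only on coordinates $x_i$ with $i\leq 0$ when $n\geq 1$ (and $i\geq 1$ when $n\leq -1$), together with an isometry-plus-density argument, to conclude that $\xi$ is independent of every coordinate, hence constant. Your approach buys the quantitative content of the random ergodic theorem (it identifies the limit of the ergodic averages as $1\ot P_A\bar\xi$) at the cost of boundary-block bookkeeping; the paper's soft coordinate-duplication argument needs no spectral theory and is built to parallel the strong-ergodicity analogue in Lemma \ref{lem.random-strongly-ergodic}, where a similar word/coordinate combinatorics recurs. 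Two points you should make explicit in a full write-up: the mean ergodic theorem is applied to the non-normal contraction $A$ (valid for any Hilbert-space contraction, with limit the projection onto $\ker(I-A)=\ker(I-A^*)$), and the identification $P_A w=P_A\bar\xi$ uses both absorption and the measure-preservation identity $\int\xi(S^n\cdot)\,d\mu=\bar\xi$.
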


\begin{proof}
We identity $L^2(X,\mu) \ot \cH = L^2(X,\cH)$. Let $\xi \in L^2(X,\cH)$ be a $V$-invariant vector. It suffices to prove that the function $\xi : X \to \cH$ is essentially constant. Consider the Bernoulli action $\Z \actson (X,\mu)$ and the $1$-cocycle $\al : \Z \times X \to \cU(\cH)$ given by \eqref{eq.my-1-cocycle}. Then, $\xi(n \cdot x) = \al(n,x)(\xi(x))$ for all $n \in \Z$ and a.e.\ $x \in X$.

Define the index set $I = \Z \sqcup \{\ast\}$ by adding one element to $\Z$. Fix $k \in \Z$ and define the injective maps $\theta_k : \Z \to I$ and $\theta'_k : \Z \to I$ by $\theta_k(n) = \theta'_k(n) = n$ when $n \in \Z \setminus \{k\}$, while $\theta_k(k) = k$ and $\theta'_k(k) = \ast$. We write $(\Xtil,\mutil) = (X_0,\mu_0)^I$ with the measure preserving factor maps
$$\pi_k : \Xtil \to X : (\pi_k(x))_n = x_{\theta_k(n)} \quad\text{and}\quad \pi'_k : \Xtil \to X : (\pi'_k(x))_n = x_{\theta'_k(n)} \; .$$
We prove that $\xi(\pi_k(x)) = \xi(\pi'_k(x))$ for a.e.\ $x \in \Xtil$. This means that the map $\xi$ is essentially independent of the coordinate $x_k$. Since this then holds for all $k \in \Z$, we will have proven that $\xi$ is essentially constant.

For every $\zeta \in L^2(X,\cH)$, we define $\zeta_{n,k} \in L^2(\Xtil,\cH) : \zeta_{n,k}(x) = \zeta(n \cdot \pi_k(x))$ and we similarly define $\zeta'_{n,k}$. The maps $\zeta \mapsto \zeta_{n,k}$ and $\zeta \mapsto \zeta'_{n,k}$ are isometric. When $\zeta$ depends on only finitely many coordinates $x_i$, we have that $\zeta_{n,k} = \zeta'_{n,k}$ when $|n|$ is sufficiently large. Such functions $\zeta$ form a dense subspace of $L^2(X,\cH)$ and it follows that
\begin{equation}\label{eq.good-limit}
\lim_{|n| \to \infty} \|\zeta_{n,k} - \zeta'_{n,k}\|_2 = 0 \quad\text{for all $\zeta \in L^2(X,\cH)$ and $k \in \Z$.}
\end{equation}
First assume that $k \geq 1$. For all $n \geq 1$, the map $x \mapsto \al(n,x)$ only depends on the coordinates $x_i$ with $i \leq 0$. Therefore $\al(n,\pi_k(x)) = \al(n,\pi'_k(x))$ for all $n \geq 1$ and $x \in \Xtil$. Since
$$\xi_{n,k}(x) = \xi(n \cdot \pi_k(x)) = \al(n,\pi_k(x))(\xi(\pi_k(x)))$$
and similarly for $\xi'_{n,k}$, we get that $\|\xi_{n,k} - \xi'_{n,k}\|_2 = \|\xi \circ \pi_k - \xi \circ \pi'_k\|_2$ for all $n \geq 1$. Using \eqref{eq.good-limit}, it follows that $\xi \circ \pi_k = \xi \circ \pi'_k$ a.e.

When $k \leq 0$, we use that for all $n \leq -1$, the map $x \mapsto \al(n,x)$ only depends on the coordinates $x_i$ with $i \geq 1$. The same argument gives that $\xi \circ \pi_k = \xi \circ \pi'_k$ a.e.
\end{proof}

Applying Lemma \ref{lem.random-ergodic} to the Koopman representation of a pmp action, the following result immediately follows. It says that a $1$-cocycle as in \eqref{eq.my-1-cocycle} is \emph{weakly mixing} whenever the essential ranges of the maps $\vphi_i$ generate a dense subgroup. More precisely, we have the following.

\begin{proposition}\label{prop.skew-ergodicity}
Let $G$ be an lcsc group and $(X_0,\mu_0)$ a standard probability space. Let $\vphi_i : X_0 \to G$ be Borel maps, for $i = 1,\ldots,k$, and consider the $1$-cocycle $\al : \Gamma \times X \to G$ given by \eqref{eq.my-1-cocycle} for the Bernoulli action of the free group $\Gamma = \F_k$.

If $G \actson (Z,\zeta)$ is any pmp action and $\Gamma \actson X \times Z : g \cdot (x,z) = (g \cdot x, \al(g,x) \cdot z)$ is the skew product, then $L^\infty(X \times Z)^\Gamma = 1 \ot L^\infty(Z)^{G_0}$, where $G_0$ is the closed subgroup of $G$ generated by the essential ranges of the maps $\vphi_i$, $i = 1,\ldots,k$.
\end{proposition}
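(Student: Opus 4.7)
The plan is to apply Lemma \ref{lem.random-ergodic} to each free generator $a_i$ of $\Gamma = \F_k$ separately and then intersect the resulting spaces of invariants. Set $\cH = L^2(Z,\zeta)$ and let $\pi : G \to \cU(\cH)$ denote the Koopman representation of the pmp action $G \actson (Z,\zeta)$. A bounded function $F \in L^\infty(X \times Z) \subset L^2(X,\cH)$ is $\Gamma$-invariant if and only if it is fixed by the Koopman operator of the skew product action of each $a_i$, $i \in \{1,\ldots,k\}$.

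Fix $i$. The subgroup $\langle a_i \rangle$ acts freely on $\Gamma$ by left translation, with orbit space $I := \langle a_i \rangle \backslash \Gamma$. A choice of coset representatives yields a measure-preserving identification $(X,\mu) = (X_0,\mu_0)^{\Gamma} \cong (\widetilde{X}_0,\widetilde{\mu}_0)^{\Z}$ with $\widetilde{X}_0 = X_0^I$, under which $a_i$ acts as the Bernoulli shift $S(\widetilde{x})_n = \widetilde{x}_{n-1}$. The cocycle value $\al(a_i,x) = \vphi_i(x_e)$ rewrites as $\widetilde{\vphi}_i(\widetilde{x}_0)$, where $\widetilde{\vphi}_i : \widetilde{X}_0 \to G$ is defined by $\widetilde{\vphi}_i(\widetilde{y}) = \vphi_i(\widetilde{y}_{J_0})$ and $J_0 \in I$ is the coset of $e$. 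Since $\widetilde{y} \mapsto \widetilde{y}_{J_0}$ is a measure-preserving projection $\widetilde{X}_0 \to X_0$, the essential range of $\widetilde{\vphi}_i$ coincides with that of $\vphi_i$. A direct computation shows that the Koopman operator of the skew product satisfies
\[
(U_{a_i^{-1}} F)(\widetilde{x}) = \pi(\widetilde{\vphi}_i(\widetilde{x}_0))^* \, F(S \widetilde{x}) \quad\text{for } F \in L^2(X,\cH),
\]
which is precisely the operator $V^*$ of Lemma \ref{lem.random-ergodic} with $\vphi$ replaced by $\pi \circ \widetilde{\vphi}_i$. The lemma therefore identifies the $a_i$-invariant vectors in $L^2(X \times Z)$ as $1 \ot \cH^{G_i}$, where $G_i \subset \cU(\cH)$ is the subgroup generated by $\pi$ applied to the essential range of $\vphi_i$.

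Because $\pi$ is strongly continuous, $\cH^{G_i}$ coincides with $L^2(Z)^{H_i}$, where $H_i$ is the closed subgroup of $G$ generated by the essential range of $\vphi_i$. Intersecting over $i = 1,\ldots,k$ then yields $(L^2(X \times Z))^{\Gamma} = 1 \ot L^2(Z)^{G_0}$, and restricting to bounded functions gives $L^\infty(X \times Z)^{\Gamma} \subseteq 1 \ot L^\infty(Z)^{G_0}$; the reverse inclusion is immediate, since any $G_0$-invariant element of $L^\infty(Z)$ pulls back to a $\Gamma$-invariant element of $L^\infty(X \times Z)$. The main point requiring care is aligning the direction of the Bernoulli shift and the placement of the adjoint in the Koopman formula with the convention of Lemma \ref{lem.random-ergodic}; once this bookkeeping is done, the rest follows formally.
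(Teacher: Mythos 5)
Your proposal is correct and takes exactly the route the paper intends: the paper deduces Proposition \ref{prop.skew-ergodicity} in one line by ``applying Lemma \ref{lem.random-ergodic} to the Koopman representation'', and your generator-by-generator argument---re-blocking $(X_0,\mu_0)^{\F_k}$ as a $\Z$-Bernoulli shift with base $X_0^{\langle a_i\rangle \backslash \Gamma}$ so that Lemma \ref{lem.random-ergodic} applies to $\vphi = \pi\circ\widetilde{\vphi}_i$, then intersecting the fixed-point spaces $1 \ot L^2(Z)^{H_i}$---is precisely the routine verification behind that sentence. The details you flag as needing care (the shift direction and adjoint placement, the invariance of the essential range under the measure-preserving coordinate projection, and the passage from unitaries fixing $\cH^{G_i}$ to the closed subgroup $H_i$ via strong continuity of the Koopman representation) are all handled correctly.
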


Lemma \ref{lem.random-ergodic} is usually referred to as Kakutani's random ergodic theorem. The following is thus the corresponding ``random strong ergodicity theorem''.

\begin{lemma}\label{lem.random-strongly-ergodic}
Let $\cH$ be a separable Hilbert space, $(X_0,\mu_0)$ a standard probability space and $\vphi_i : X_0 \to \cU(\cH)$ Borel maps, for $i \in \{1,\ldots,k\}$ and $k \geq 2$. Define $\Gamma = \F_k$ and the $1$-cocycle $\al : \Gamma \times X \to \cU(\cH)$ for the Bernoulli action $\Gamma \actson (X,\mu) = (X_0,\mu_0)^\Gamma$ given by \eqref{eq.my-1-cocycle}. Consider the skew product unitary representation
$$\pi : \Gamma \to \cU(L^2(X,\mu) \ot \cH) : (\pi(g)^* \xi)(x) = \al(g,x)^*(\xi(g \cdot x)) \quad\text{for all $g \in \Gamma$, $\xi \in L^2(X,\mu) \ot \cH$.}$$
If $\xi_n \in L^2(X,\mu) \ot \cH$ is a sequence of approximately $\pi$-invariant unit vectors and $\eta_n \in \cH$ is defined by $\eta_n = \int_X \xi_n(x) \, d\mu(x)$, then $\|\xi_n - 1 \ot \eta_n\|_2 \to 0$.
\end{lemma}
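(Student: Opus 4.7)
The lemma is a spectral-gap statement for the twisted Bernoulli representation $\pi$ on $L^2(X,\mu)\ot\cH$: approximately $\pi$-invariant unit vectors must lie close to the constants $1\ot\cH$. The hypothesis $k\ge 2$ (i.e.\ $\F_k$ nonamenable) is essential, since for $k=1$ the analogous statement already fails. I introduce the self-adjoint contraction $T = \tfrac{1}{2k}\sum_{i=1}^k(\pi(a_i)+\pi(a_i)^*)$; the identity $(I-T) = \tfrac{1}{2k}\sum_i(I-\pi(a_i))^*(I-\pi(a_i))$ translates approximate $\pi$-invariance of $\xi_n$ into $\|T\xi_n-\xi_n\|_2\to 0$. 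Denoting by $P$ the projection onto $1\ot\cH$, the desired conclusion reads $\|(I-P)\xi_n\|_2\to 0$.

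The plan is to establish the spectral bound
\[ \|\pi(\mu^{*N})\xi\|_2 \le \|P\xi\|_2 + c^N\,\|(I-P)\xi\|_2 \qquad (\xi\in L^2(X,\mu)\ot\cH,\ N\ge 1), \]
where $\mu = \tfrac{1}{2k}\sum_i(\delta_{a_i}+\delta_{a_i^{-1}})$ is the symmetric uniform measure on the generators and $c<1$ depends only on $k$. Given this bound, applied to $\xi_n$ with $N=N_n\to\infty$ chosen so that $N_n\|T\xi_n-\xi_n\|_2\to 0$, the estimate $\|\pi(\mu^{*N_n})\xi_n-\xi_n\|_2\le N_n\|T\xi_n-\xi_n\|_2\to 0$ gives $\|\pi(\mu^{*N_n})\xi_n\|_2\to 1$, and the bound then forces $\|P\xi_n\|_2\to 1$, yielding $\|(I-P)\xi_n\|_2\to 0$ by an elementary computation.

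For the spectral bound on the \emph{untwisted} Koopman $\sigma\ot\id$, I rely on Popa's spectral gap for Bernoulli actions of nonamenable groups. Using the Fourier decomposition $L^2(X,\mu)\ot\cH = \bigoplus_{F\subset\F_k\text{ finite}}L^2_0(X_0)^{\ot F}\ot\cH$ (with $F=\emptyset$ corresponding to the constants $1\ot\cH$), each nonempty $F$ has trivial stabilizer in $\F_k$ by torsion-freeness, so the $\F_k$-orbit of $F$ carries a multiple of the regular representation $\lambda_{\F_k}$. Kesten's theorem then yields $\|\sigma(\mu^{*N})\upharpoonright L^2_0(X,\mu)\ot\cH\|\le c^N$ with $c = \sqrt{2k-1}/k<1$, which is the untwisted version of the spectral bound.

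The main obstacle is transferring this bound from $\sigma\ot\id$ to the twisted $\pi$. Writing $\pi(a_i) = M_i\cdot(\sigma(a_i)\ot\id)$ where $M_i$ is the unitary multiplication by $\vphi_i(y_{a_i})$, the $M_i$ do not preserve $1\ot\cH$, so $(I-P)$ is not $\pi$-invariant and a spectral bound for $\pi$ on $(I-P)(L^2(X,\mu)\ot\cH)$ does not drop out directly. I expect to resolve this by one of two routes: either (i) pass to the tensor product $\pi\ot\bar\pi$ on $L^2(X\times X, B_2(\cH))$, where the cocycle twist acts by conjugation in the Hilbert--Schmidt variable and cancels along the diagonal, reducing the question to the untwisted Koopman of $\F_k$ acting diagonally on $X\times X$; or (ii) adapt Popa's argument directly, exploiting that the multiplication $M_i$ couples only the Fourier levels $F$ and $F\triangle\{a_i\}$ (via creation/annihilation in the coordinate $y_{a_i}$), so that the level-by-level analysis underlying Kesten's bound carries over to the twisted representation with matching constants.
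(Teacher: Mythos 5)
There is a genuine gap here, and it sits exactly where the lemma's content lies. Your reductions are fine: passing to the Markov operator $T = \pi(\mu)$, and the elementary deduction of the conclusion from the claimed bound $\|\pi(\mu^{\ast N})\xi\|_2 \leq \|P\xi\|_2 + c^N \|(I-P)\xi\|_2$; so is the untwisted spectral gap (nonempty Fourier levels have trivial stabilizers by torsion-freeness, Kesten gives $c = \sqrt{2k-1}/k$). But the untwisted statement is standard; the whole point of the lemma is the cocycle twist, and that is precisely the step you leave at ``I expect to resolve by one of two routes'', neither of which works as stated. In route (i), $\pi \ot \bar{\pi}$ acts on $L^2(X \times X) \ot \cH \ot \overline{\cH}$ with twist $\al(g,x) \ot \overline{\al(g,x')}$ in two \emph{independent} Bernoulli coordinates; conjugation, hence cancellation, happens only on the diagonal $x = x'$, which is null, so no reduction to the untwisted diagonal Koopman representation results. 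Even the single-coordinate variant (the conjugation representation on $L^2(X)$ tensored with Hilbert--Schmidt operators, applied to $x \mapsto \xi_n(x) \ot \overline{\xi_n(x)}$) is lossy: when all $\vphi_i$ take values in $\T \cdot 1$, the conjugation is trivial, you only learn $\xi_n(x) \approx c_n(x)\,\eta_n$ with $|c_n| \approx 1$, and ruling out a nonconstant phase $c_n$ is again the full twisted problem in the scalar case --- which genuinely needs $k \geq 2$, since the statement fails for $k=1$. Route (ii) is mis-stated: by \eqref{eq.my-1-cocycle}, $\al(a_i,x) = \vphi_i(x_e)$ depends on the coordinate $x_e$, so $M_i$ couples the level $\cF$ with $\cF \cup \{e\}$ and $\cF \setminus \{e\}$, not with $\cF \triangle \{a_i\}$; and in any case a level-by-level Kesten analysis does not transfer, because the twisted $\pi(g)$ neither permutes the Fourier levels nor preserves $(1 \ot \cH)^\perp$ --- indeed $\pi(g)(1 \ot \eta)$ is the nonconstant function $x \mapsto \al(g,g^{-1}\cdot x)\eta$. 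So your key estimate is never established.

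For comparison, the paper does not perturb the untwisted spectral gap at all; it proves orthogonality of the \emph{twisted} matrix coefficients directly. After restricting to $\F_2 \subset \F_k$, it splits the nonempty Fourier levels into two classes $\cC_1, \cC_2$ according to whether some maximal-length word of $\cF$ begins with a negative generator power (with $\{e\} \in \cC_1$), and introduces the free subgroups $\Gamma_1 = \langle ab^{-1}, a^2b^{-2}\rangle$ and $\Gamma_2 = \langle a^{-1}b, a^{-2}b^2\rangle$. For $g \in \Gamma_1 \setminus \{e\}$ (which starts with a positive and ends with a negative letter) and a product vector $\xi$ at level $\cF \in \cC_1$, one picks a maximal $h_0 \in \cF$ starting with a negative letter: then $gh_0$ is reduced, $gh_0 \notin \cF$, $|gh_0| \geq |g|$, while $\al(g^{-1},\cdot)$ depends only on coordinates $x_k$ with $|k| < |g|$; integrating $\langle \pi(g)\xi, \xi\rangle$ in the variable $x_{gh_0}$, on which $\pi(g)\xi$ depends through a mean-zero factor and nothing else depends at all, gives $\langle \pi(g)\xi,\xi\rangle = 0$. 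Hence $\pi|_{\Gamma_i}$ on the closed span of $\pi(\Gamma_i)\cK_i$ is a multiple of the regular representation of $\F_2$, and nonamenability forces $\|P_{\cK_i}(\xi_n)\|_2 \to 0$ for $i = 1,2$, which proves the lemma since $L^2(X,\cH) = \cK_1 \oplus \cK_2 \oplus (1 \ot \cH)$. Some such mechanism exploiting which coordinates the cocycle can see is what any completion of your sketch would have to supply.
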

\begin{proof}
Restricting the action to $\F_2 \subset \F_k$, it suffices to prove the lemma for $k=2$. We denote by $a=a_1$ and $b=a_2$ the two free generators of $\F_2$. We again identify $L^2(X,\mu) \ot \cH = L^2(X,\cH)$.

Let $\cC$ be the set of all finite nonempty subsets of $\F_2$. We partition $\cC = \cC_1 \sqcup \cC_2$ where $\cC_1$ consists of $\{e\}$ and the finite nonempty subsets $\cF \subset \F_2$ with the following two properties: $\cF \neq \{e\}$ and one of the largest words in $\cF$ starts with a negative power of $a$ or $b$. Then $\cC_2$ consists of the finite nonempty subsets $\cF \subset \F_2$ with the following two properties: $\cF \neq \{e\}$ and all the largest words in $\cF$ start with a positive power of $a$ or $b$.

For every nonempty finite subset $\cF \subset \F_2$, we define the set of vectors $\cS(\cF) \subset L^2(X,\cH)$ consisting of all $\xi \in L^2(X,\cH)$ of the form
\begin{equation}\label{eq.my-xi}
\begin{split}
\xi(x) = \Bigl(\prod_{h \in \cF} \xi_h(x_h)\Bigr) \, \eta \quad &\text{where $\eta \in \cH$, $\|\eta\|=1$, $\xi_h \in L^2(X_0,\mu_0)$, $\|\xi_h\|_2 = 1$ and}\\ &\text{$\int_{X_0} \xi_h \, d\mu_0 = 0$ for all $h \in \cF$.}
\end{split}
\end{equation}
For $i \in \{1,2\}$, define $\cK_i$ as the closed linear span of all $\cS(\cF)$ with $\cF \in \cC_i$. Then, $L^2(X,\cH) = \cK_1 \oplus \cK_2 \oplus (1 \ot \cH)$.

Define the subgroup $\Gamma_1 \subset \F_2$ as $\Gamma_1 = \langle ab^{-1}, a^2 b^{-2} \rangle$. Also define $\Gamma_2 = \langle a^{-1} b , a^{-2} b^2 \rangle$. Note that $\Gamma_i \cong \F_2$. Also note that every $g \in \Gamma_1 \setminus \{e\}$ is a word that starts with a positive power of $a$ or $b$ and that ends with a negative power of $a$ or $b$. Similarly, every $g \in \Gamma_2 \setminus \{e\}$ is a word that starts with a negative power of $a$ or $b$ and that ends with a positive power of $a$ or $b$.

For every $i \in \{1,2\}$, denote by $\cL_i$ the closed linear span of $\pi(\Gamma_i) \cK_i$. We prove below that the unitary representation $\pi(\Gamma_i)$ of $\Gamma_i \cong \F_2$ on $\cL_i$ is a multiple of the regular representation. Since $\Gamma_i$ is nonamenable, it then follows that $\|P_{\cK_i}(\xi_n)\|_2 \to 0$ for $i \in \{1,2\}$. Since $L^2(X,\cH) = \cK_1 \oplus \cK_2 \oplus (1 \ot \cH)$, this implies that $\|\xi_n - 1 \ot \eta_n\|_2 \to 0$.

By symmetry, we only prove that the representation $\pi(\Gamma_1)$ on $\cL_1$ is a multiple of the regular representation. To prove this, it suffices to fix $\cF \in \cC_1$, $\xi \in \cS(\cF)$, $g \in \Gamma_1 \setminus \{e\}$ and prove that $\langle \pi(g) \xi,\xi\rangle = 0$. Write $\xi$ as in \eqref{eq.my-xi}. Then
$$(\pi(g)\xi)(x) = \Bigl(\prod_{h \in g \cF} \xi_{g^{-1}h}(x_h)\Bigr) \, \al(g^{-1},x)^*(\eta) \; .$$
If $\cF = \{e\}$, write $h_0 = e$. If $\cF \neq \{e\}$, choose $h_0 \in \cF$ such that the word $h_0$ starts with a negative power of $a$ or $b$ and $|h| \leq |h_0|$ for all $h \in \cF$.
Since $g^{-1}$ starts with a positive power of $a$ or $b$, the function $x \mapsto \al(g^{-1},x)$ only depends on coordinates $x_k$ with $|k| < |g|$. Since $g$ ends with a negative power of $a$ or $b$, we have that $g h_0$ is a reduced word, with $|gh_0| = |g| + |h_0|$. So $|gh_0| > |h|$ for all $h \in \cF$, implying that $gh_0 \not\in \cF$. Also $|gh_0| \geq |g|$, implying that $x \mapsto \al(g^{-1},x)$ does not depend on the coordinate $x_{g h_0}$. We conclude that the integral of the function $x \mapsto \langle (\pi(g)\xi)(x) ,\xi(x) \rangle$ over the variable $x_{gh_0}$ equals zero. In particular, $\langle \pi(g) \xi,\xi\rangle = 0$.
\end{proof}

In the same way as Proposition \ref{prop.skew-ergodicity} is an immediate consequence of Lemma \ref{lem.random-ergodic}, the following result is an immediate consequence of Lemma \ref{lem.random-strongly-ergodic}.

\begin{proposition}\label{prop.skew-strong-ergodicity}
Let $G$ be an lcsc group and $(X_0,\mu_0)$ a standard probability space. Let $\vphi_i : X_0 \to G$ be Borel maps, for $i = 1,\ldots,k$, and consider the $1$-cocycle $\al : \Gamma \times X \to G$ given by \eqref{eq.my-1-cocycle} for the Bernoulli action of the free group $\Gamma = \F_k$. Denote $\nu_i = (\vphi_i)_*(\mu_0)$ and define $G_0$ as the subgroup of $G$ generated by the supports of the measures $\nu_i$, $i \in \{1,\ldots,k\}$. Assume that $G_0$ is dense in $G$ and that at least one of the $\nu_i$ is not singular w.r.t.\ the Haar measure of $G$.

Then $G_0 = G$ and for any strongly ergodic pmp action $G \actson (Z,\zeta)$, the skew product action $\Gamma \actson X \times Z : g \cdot (x,z) = (g \cdot x, \al(g,x) \cdot z)$ remains strongly ergodic.
\end{proposition}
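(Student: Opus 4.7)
The plan is to deduce both conclusions from Lemma \ref{lem.random-strongly-ergodic} applied with $\cH = L^2(Z,\zeta)$ and the maps $\vphi_i$ composed with the Koopman representation $u: G \to \cU(L^2(Z))$. For the equality $G_0 = G$: the hypothesis that some $\nu_{i_0}$ is not singular with respect to Haar gives a nonzero compactly supported $f_0 \in L^1(G) \cap L^2(G)$ with $0 \le f_0 \le d\nu_{i_0}^{\mathrm{ac}}/d\lambda$. The continuous function $f_0 \ast \check{f}_0$ satisfies $(f_0 \ast \check{f}_0)(e) = \|f_0\|_2^2 > 0$, hence is strictly positive on an open neighborhood $U$ of $e$. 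Since $U \subset \supp(\nu_{i_0}) \cdot (\supp\nu_{i_0})^{-1} \subset G_0$, the subgroup $G_0$ is open, hence closed, so $G_0 = \overline{G_0} = G$.

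For strong ergodicity, let $F_n \in L^\infty(X \times Z)$ be a uniformly bounded sequence with $\|F_n \circ \gamma - F_n\|_1 \to 0$ for every $\gamma \in \Gamma$, which by boundedness also holds in $\|\cdot\|_2$. Lemma \ref{lem.random-strongly-ergodic} then yields $\|F_n - 1 \ot \eta_n\|_2 \to 0$, where $\eta_n := \int_X F_n(x,\cdot)\, d\mu(x) \in L^\infty(Z)$ is a bounded sequence. The resulting approximate $\Gamma$-invariance of $1 \ot \eta_n$, combined with
\[
\|\pi(a_i)(1\ot\eta_n) - 1\ot\eta_n\|_2^2 = \int_G \|u(h)\eta_n - \eta_n\|_2^2\, d\nu_i(h) = 2\|\eta_n\|_2^2 - 2\Re\langle u(\nu_i)\eta_n, \eta_n\rangle,
\]
gives $u(\nu_i)\eta_n - \eta_n \to 0$ in $L^2(Z)$ for every $i$. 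Decomposing $\eta_n = c_n\cdot 1 + \eta_n^0$ with $c_n = (\mu \times \zeta)(F_n)$ and $\eta_n^0 \in L^2_0(Z)$, and using $u(\nu_i)1 = 1$, this gives $u(\nu_i)\eta_n^0 - \eta_n^0 \to 0$.

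The main obstacle is to conclude $\|\eta_n^0\|_2 \to 0$ from strong ergodicity of $G \actson Z$: the approximate $\nu_i$-invariance must be upgraded to approximate $G$-invariance. Arguing by contradiction and normalizing, I may assume $\tilde\eta_n \in L^2_0(Z)$ are unit vectors with $u(\nu_i)\tilde\eta_n - \tilde\eta_n \to 0$ for all $i$. The continuous positive definite functions $f_n(g) = \langle u(g)\tilde\eta_n, \tilde\eta_n\rangle$ satisfy $f_n(e) = 1$ and $|f_n| \le 1$. By the standard compactness of normalized positive definite functions in the topology of uniform convergence on compact subsets of $G$, a subsequence converges to a continuous positive definite function $f$ with $f(e) = 1$. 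Dominated convergence gives $\int f\, d\nu_i = 1$ for each $i$, forcing $f = 1$ $\nu_i$-a.e. Realizing $f$ through its GNS triple $(\rho, \cK, \xi)$, the closed subgroup $H := \{g : f(g) = 1\} = \{g : \rho(g)\xi = \xi\}$ has positive Haar measure (since some $\nu_i$ has an AC part), hence is open, contains each $\supp \nu_j$, and therefore contains $G_0 = G$; so $f \equiv 1$. Then $\|u(g)\tilde\eta_n - \tilde\eta_n\|^2 = 2 - 2\Re f_n(g) \to 0$ uniformly on compact subsets of $G$, producing a non-trivial approximately $G$-invariant sequence in $L^2_0(Z)$ and contradicting strong ergodicity of $G \actson Z$.

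Combining $\|\eta_n^0\|_2 \to 0$ with $\|F_n - 1 \ot \eta_n\|_2 \to 0$ yields $\|F_n - c_n\|_1 \le \|F_n - c_n\|_2 \to 0$, which is the defining property of strong ergodicity for $\Gamma \actson X \times Z$.
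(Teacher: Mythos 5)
Your reduction to Lemma \ref{lem.random-strongly-ergodic}, the identity $\int_G \|u(h)\eta_n-\eta_n\|_2^2\,d\nu_i(h)=2\|\eta_n\|_2^2-2\Re\langle u(\nu_i)\eta_n,\eta_n\rangle$, and the Steinhaus argument for $G_0=G$ are all correct, and your route through positive definite functions genuinely differs from the paper's. But there is a real gap at the pivotal step: the invoked ``standard compactness of normalized positive definite functions in the topology of uniform convergence on compact subsets'' is \emph{false} for noncompact $G$. What is compact is the set $\{f \text{ continuous positive definite} : f(e)\le 1\}$ in the weak-* topology $\sigma(L^\infty(G),L^1(G))$; the normalized set $\{f(e)=1\}$ is not closed in it, because mass can escape to infinity. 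For example, on $G=\R$ the functions $f_n(x)=e^{-nx^2}$ are continuous positive definite with $f_n(0)=1$, converge pointwise to $\mathbf{1}_{\{0\}}$, and admit no subsequence converging uniformly on compacta to a continuous positive definite $f$ with $f(e)=1$. Ruling out this escape of mass is exactly the content of the step, so it cannot be assumed; and your subsequent ``dominated convergence gives $\int f\,d\nu_i=1$'' presupposes pointwise ($\nu_i$-a.e.) convergence $f_n\to f$, i.e., precisely the locally uniform convergence that may fail --- weak-* convergence in $L^\infty(G)$ does not pair with the singular parts of the $\nu_i$ at all.

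The gap is repairable inside your own framework, by using the absolute continuity hypothesis \emph{earlier} than you do. From $\int_G(2-2\Re f_n)\,d\nu_i\to 0$ and $|f_n|\le 1$ one has $|1-f_n|^2\le 2-2\Re f_n$, so $f_n\to 1$ in $L^2(\nu_i)$ for every $i$; pairing a weak-* limit point $f$ against the density of the absolutely continuous part of $\nu_{i_0}$ then gives $f=1$ a.e.\ on a set of positive Haar measure, whence (identifying $f$ a.e.\ with a continuous positive definite function) $f(e)=1$. Only at this point may one invoke Raikov's theorem (weak-* and locally uniform convergence coincide on the normalized set) to upgrade to uniform convergence on compacta, after which your argument with $H=\{g: f(g)=1\}$ goes through. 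Two further remarks. First, your closing sentence, contradicting ``strong ergodicity'' by an approximately invariant sequence in $L^2_0(Z)$, conflates strong ergodicity with spectral gap, which differ in general; your contradiction is valid only because $\tilde\eta_n=\eta^0_n/\|\eta^0_n\|_2$ is uniformly bounded in $L^\infty(Z)$ (the $\eta_n$ are averages of the $F_n$ and the normalizing constants are bounded below along the contradiction subsequence), so that $\|\tilde\eta_n\|_1\to 0$ would force $\|\tilde\eta_n\|_2\to 0$; this must be said explicitly. Second, the paper avoids this harmonic analysis altogether: expressing approximate invariance under each generator and applying Fubini, it finds co-null subsets $T_i\subset S_i$ such that $H_n(h\cdot z)-H_n(z)\to 0$ a.e.\ for every $h\in T_i$; the set of all such $h$ is a subgroup of $G$, which is dense and of positive Haar measure, hence equals $G$, and Lemma \ref{lem.criteria-strongly-ergodic} concludes. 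That pointwise route is shorter precisely because it never needs to control a limiting positive definite function.
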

\begin{proof}
Let $\lambda$ be a left Haar measure on $G$. Denote by $S_i \subset G$ the support of $\nu_i$. Assume that $\nu_{i_0}$ is not singular w.r.t.\ $\lambda$. Then $\lambda(S_{i_0}) > 0$. A fortiori, $\lambda(G_0) > 0$. Since $G_0$ is a dense subgroup of $G$, it follows that $G_0 = G$.

Let $G \actson (Z,\zeta)$ be a strongly ergodic pmp action. Let $F_n \in L^\infty(X \times Z)$ be a bounded sequence such that, w.r.t.\ the skew product action $\Gamma \actson X \times Z$, we have that $F_n(g \cdot (x,z)) - F_n(x,z) \to 0$ for all $g \in \Gamma$ and a.e.\ $(x,z) \in X \times Z$. Write $H_n = (\mu \ot \id)(F_n)$. By Lemma \ref{lem.random-strongly-ergodic} and after passage to a subsequence, we have that $F_n(x,z) - H_n(z) \to 0$ for a.e.\ $(x,z) \in X \times Z$.

Expressing the approximate invariance of $F_n$ under the generators $a_i \in \Gamma$ and using the Fubini theorem, we find Borel sets $T_i \subset S_i$ such that $\nu_i(S_i \setminus T_i) = 0$ and for all $h \in T_i$,
\begin{equation}\label{eq.we-have-this}
H_n(h \cdot z) - H_n(z) \to 0 \quad\text{for a.e.\ $z \in Z$.}
\end{equation}
Since $S_i$ is the support of $\nu_i$, we get that $T_i$ is dense in $S_i$. So, the subgroup $G_1 \subset G$ generated by $T_1 \cup \cdots \cup T_k$ is dense in $G$. Since $\nu_{i_0}$ is not singular w.r.t.\ $\lambda$, also $\lambda(T_{i_0}) > 0$. It follows that $\lambda(G_1) > 0$ and thus $G_1 = G$. It then follows that \eqref{eq.we-have-this} holds for all $h \in G$. By Lemma \ref{lem.criteria-strongly-ergodic} and after a further passage to a subsequence, we find a bounded sequence $t_n \in \C$ such that $H_n(z) - t_n \to 0$ for a.e.\ $z \in Z$. It follows that $F_n(x,z) - t_n \to 0$ for a.e.\ $(x,z) \in X \times Z$. By Lemma \ref{lem.criteria-strongly-ergodic}, the skew product action $\Gamma \actson X \times Z$ is strongly ergodic.
\end{proof}

\subsection{Construction of measure equivalence embeddings}

Given a standard probability space $(X_0,\mu_0)$ and a standard Borel space $Z$, recall that Borel maps $\vphi_i : X_0 \to Z$, $i \in \{1,\ldots,k\}$, are called \emph{independent} if
$$\mu_0(\vphi_1^{-1}(U) \cap \cdots \cap \vphi_k^{-1}(U)) = \mu_0(\vphi_1^{-1}(U)) \cdots \mu_0(\vphi_k^{-1}(U))$$
for every Borel set $U \subset Z$.

Given an lcsc group $G$, we denote by $L^2(G)$ the Hilbert space of square integrable functions w.r.t.\ the left Haar measure and denote, for every probability measure $\nu$ on $G$, by $\lambda(\nu)$ the convolution operator on $L^2(G)$ given by
$$(\lambda(\nu) \xi)(g) = \int_G \xi(k^{-1}g) \, d\nu(k) \quad\text{for all $\xi \in L^2(G)$ and $g \in G$.}$$

\begin{proposition}\label{prop.ME-embedding}
Let $G$ be an lcsc group and $k \in \N$. Let $(X_0,\mu_0)$ be a standard probability space and $\vphi_i : X_0 \to G$ Borel maps for $i \in \{1,\ldots,k\}$. Denote $\Gamma = \F_k$ and consider the $1$-cocycle $\al : \Gamma \times X \to G$ for the Bernoulli action $\Gamma \actson (X,\mu) = (X_0,\mu_0)^\Gamma$ given by \eqref{eq.my-1-cocycle}. Consider the skew product action $\Gamma \actson X \times G : g \cdot (x,h) = (g \cdot x, \al(g,x) h)$.
\begin{enumlist}
\item ($k=1$) If the random walk on $G$ defined by the probability measure $\nu = (\vphi_1)_*(\mu_0)$ is transient, meaning that $\sum_{k=1}^\infty \nu^{\ast k}(K) < +\infty$ for every compact subset $K \subset G$, then the skew product action $\Gamma \actson X \times G$ admits a fundamental domain.
\item ($k \geq 2$) If the maps $\vphi_i$ are independent and $\|\lambda((\vphi_i)_*(\mu_0))\| < (2k-1)^{-1}$ for all $i \in \{1,\ldots,k\}$, then the skew product action $\Gamma \actson X \times G$ admits a fundamental domain.
\end{enumlist}
\end{proposition}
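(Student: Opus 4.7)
The plan is to show, in each case, that the measure-preserving action $\Gamma \actson (X \times G, \mu \times \lambda)$ is dissipative, where $\lambda$ denotes a left Haar measure on $G$. A measure-preserving dissipative action of a countable group on a $\sigma$-finite standard measure space admits a measurable fundamental domain by the Hopf decomposition and standard measurable selection, so this suffices. I would establish dissipativity by showing $\sum_{n \in \Gamma} (\mu \times \lambda)(A \cap n \cdot A) < \infty$ for $A = X \times K$ with $K \subset G$ compact of positive Haar measure; this forces $A$ into the dissipative part via a Borel--Cantelli argument on orbits, and exhausting $G$ with compacts $K_m \uparrow G$ then propagates dissipativity to all of $X \times G$.

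The crux is a clean identity. Let $\rho_n \in \Prob(G)$ denote the law of $\al(n, x)$ under $\mu$. Using the cocycle relation $\al(n^{-1}, x) = \al(n, n^{-1} \cdot x)^{-1}$ together with the $\Gamma$-invariance of $\mu$, one checks $\check{\rho}_{n^{-1}} = \rho_n$, and a direct Fubini computation then yields
\[
(\mu \times \lambda)(A \cap n \cdot A) \;=\; \int_G \lambda(g K \cap K) \, d\rho_n(g) \;=\; \langle \lambda(\rho_n) \chi_K, \chi_K \rangle_{L^2(G)} \;\leq\; \lambda(K) \, \rho_n(K K^{-1}),
\]
using that $gK \cap K$ is empty unless $g \in K K^{-1}$.

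In case (1), $\rho_n = \nu^{\ast n}$ for $n \geq 1$ and $\rho_n = \check{\nu}^{\ast |n|}$ for $n \leq -1$, so $\sum_n \rho_n(K K^{-1}) < \infty$ follows from the transience hypothesis applied to the compact set $K K^{-1}$ (using $\check{\nu}^{\ast m}(L) = \nu^{\ast m}(L^{-1})$ and the symmetry of $K K^{-1}$ to handle negative $n$). In case (2), I would expand, for a reduced word $n = a_{i_\ell}^{\epsilon_\ell} \cdots a_{i_1}^{\epsilon_1}$ in $\F_k$, the product $\al(n, x) = \prod_{j = \ell}^{1} \vphi_{i_j}(x_{c_j})^{\epsilon_j}$ for explicit $c_j \in \F_k$. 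A combinatorial bookkeeping shows that the $c_j$ are pairwise distinct \emph{except} possibly for consecutive pairs with $(\epsilon_j, \epsilon_{j+1}) = (-1, +1)$, in which case the indices $i_j$ and $i_{j+1}$ are forced to differ since the word is reduced. The independence hypothesis on $\vphi_1, \ldots, \vphi_k$ is then exactly what is needed to conclude joint independence of the $\ell$ factors, giving the convolution factorization $\rho_n = \nu_{i_\ell}^{\epsilon_\ell} \ast \cdots \ast \nu_{i_1}^{\epsilon_1}$ with the convention $\nu_i^{-1} := \check{\nu}_i$. Since $\|\lambda(\check{\nu}_i)\| = \|\lambda(\nu_i)\|$, this gives $\|\lambda(\rho_n)\| \leq c^{|n|}$ for $c := \max_i \|\lambda(\nu_i)\|$; combining with the word growth $\#\{n : |n| = \ell\} \leq 2k(2k-1)^{\ell - 1}$ and the assumption $c(2k-1) < 1$ yields $\sum_n \|\lambda(\rho_n)\| < \infty$, and hence $\sum_n \langle \lambda(\rho_n) \chi_K, \chi_K\rangle \leq \lambda(K) \sum_n \|\lambda(\rho_n)\| < \infty$.

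The main obstacle is the combinatorial/independence step in case (2): one must carefully identify the coordinates $c_j$ on which each factor of $\al(n, x)$ depends and verify that the stated independence of $\vphi_1, \ldots, \vphi_k$ is precisely the hypothesis needed to handle the possible coincidences $c_j = c_{j+1}$ without spoiling the convolution factorization of $\rho_n$. Everything else (the identity for $(\mu \times \lambda)(A \cap n \cdot A)$, the transience computation in case (1), and the passage from summability to a fundamental domain) is essentially routine.
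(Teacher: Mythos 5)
Your proposal is correct and follows essentially the same route as the paper: the same identity $(\mu\times\lambda)(A\cap g\cdot A)=\langle\lambda(\rho_g)1_K,1_K\rangle$, the same transience bound via $\rho_g(KK^{-1})$ in case (1), and the same convolution factorization with $\|\lambda(\rho_g)\|\leq\rho^{|g|}$ against the word growth $2k(2k-1)^{\ell-1}$ in case (2), followed by exhaustion of $G$ by compacts to get dissipativity and a fundamental domain. The only difference is that you spell out the coordinate bookkeeping (coincidences $c_j=c_{j+1}$ only for $(\epsilon_j,\epsilon_{j+1})=(-1,+1)$, where reducedness forces $i_j\neq i_{j+1}$ and the independence hypothesis applies), which the paper compresses into the phrase ``the same computation, using the independence of the maps $\vphi_i$''; your account of that step is accurate.
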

\begin{proof}
Let $\lambda$ be a left Haar measure on $G$. For every $g \in \Gamma$, we have that
$$(\mu \times \lambda)\bigl( (X \times K) \cap g^{-1} \cdot (X \times K)\bigr) = \int_X \langle \lambda(\al(g,x)) 1_K , 1_K \rangle \, d\mu(x) \; .$$
When $k = 1$ and $\nu = (\vphi_1)_*(\mu_0)$, we write $\Gamma = \Z$ and get for all $n \geq 1$ that
$$(\mu \times \lambda)\bigl( (X \times K) \cap (-n) \cdot (X \times K)\bigr) = \langle \lambda(\nu^{\ast n}) 1_K , 1_K \rangle \leq \lambda(K) \, \nu^{\ast n}(K K^{-1}) \; .$$
When $n \leq -1$, a similar estimate holds. By transience,
$$\sum_{n \in \Z} (\mu \times \lambda)\bigl( (X \times K) \cap (-n) \cdot (X \times K)\bigr) < + \infty$$
for every compact subset $K \subset G$. So, $X \times K$ is contained in the dissipative part of $\Z \actson X \times G$ for every compact $K \subset G$. So, $\Z \actson X \times G$ admits a fundamental domain.

When $k \geq 2$, we write $\nu_i = (\vphi_i)_*(\mu_0)$ and $\rho = \max \bigl\{ \|\lambda(\nu_i)\| \bigm| i \in \{1,\ldots,k\} \bigr\}$, so that $\rho < (2k-1)^{-1}$. The same computation, using the independence of the maps $\vphi_i$, gives
$$(\mu \times \lambda)\bigl( (X \times K) \cap g^{-1} \cdot (X \times K)\bigr) = \langle T_1 \cdots T_{|g|} 1_K, 1_K \rangle \; ,$$
where each of the operators $T_j$ is of the form $\lambda(\nu_i)$ or $\lambda(\nu_i)^*$ for some $i \in \{1,\ldots,k\}$. It follows that
$$(\mu \times \lambda)\bigl( (X \times K) \cap g^{-1} \cdot (X \times K)\bigr) \leq \lambda(K) \, \rho^{|g|}$$
for all $g \in \Gamma$. Since for every $n \geq 1$, the number of elements in $\{g \in \F_k \mid |g| = n\}$ equals $2 k (2k-1)^{n-1}$, we again conclude that
$$\sum_{g \in \Gamma} (\mu \times \lambda)\bigl( (X \times K) \cap g^{-1} \cdot (X \times K)\bigr) < + \infty \; ,$$
so that $\Gamma \actson X \times G$ admits a fundamental domain.
\end{proof}

\subsection{Proof of Theorems \ref{thm.main-ME-embedding}, \ref{thm.ME-embedding-nonamenable} and \ref{thm.ME-embedding-noncompact}}\label{sec.proof-A}

\begin{proof}[{Proof of Theorem \ref{thm.main-ME-embedding}}]
By Proposition \ref{prop.ME-embedding}, the given action $\Gamma \times G \actson \Om$ defines a measure equivalence embedding of $\Gamma$ into $G$. Write $Y = \Gamma \backslash \Omega$ with the natural action $G \actson Y$. Let $G \actson (Z,\zeta)$ be an ergodic pmp action. By Proposition \ref{prop.skew-ergodicity}, the skew product action $\Gamma \actson X \times Z$ is ergodic. This means that the diagonal action $G \actson Y \times Z$ is ergodic. So, $G \actson Y$ is weakly mixing. If $G \actson (Z,\zeta)$ is a strongly ergodic pmp action, it follows from Proposition \ref{prop.skew-strong-ergodicity} that the skew product action $\Gamma \actson X \times Z$ is strongly ergodic. Applying Proposition \ref{prop.stability-of-strong-ergodicity-under-correspondence} to the correspondence $\Gamma \times G \actson \Omega \times Z$, this precisely means that the diagonal action $G \actson Y \times Z$ is strongly ergodic. So, $G \actson Y$ is stably strongly ergodic.
\end{proof}

\begin{proof}[{Proof of Theorem \ref{thm.ME-embedding-nonamenable}}]
The implication $3\Rightarrow 2$ is trivial. As explained below \cite[Theorem B]{GM15} the implication $2\Rightarrow 1$ also holds true. So we only need to prove that $1 \Rightarrow 3$. Let $\nu_0$ be any probability measure on $G$ that is equivalent with a left Haar measure $\lambda$ on $G$. By \cite[Th\'{e}or\`{e}me 4]{BC73}, we have that $\|\lambda(\nu_0)\| < 1$. Defining $\nu = \nu_0^{\ast k}$ for sufficiently large $k$, we get that $\|\lambda(\nu)\| < 1/3$. We can now apply Theorem \ref{thm.main-ME-embedding} to conclude that 3 holds.
\end{proof}

The proof of Theorem \ref{thm.ME-embedding-noncompact} follows the same lines as the proof of Theorem \ref{thm.ME-embedding-nonamenable}, once we have proven the following lemma, which is probably well known, although we could not find a reference for this result. Recall that a probability measure $\nu$ on an lcsc group defines a transient random walk if and only if $\sum_{n=1}^\infty \nu^{\ast n}(K) < +\infty$ for all compact subsets $K \subset G$.

\begin{lemma}\label{lem.transient-random-walk}
Every noncompact lcsc group $G$ admits a symmetric probability measure that is equivalent with the Haar measure and defines a transient random walk on $G$.
\end{lemma}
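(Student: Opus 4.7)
The plan is to explicitly construct a symmetric probability measure $\nu$ on $G$, equivalent to the Haar measure $\lambda$, satisfying $\sum_{n \geq 1} \nu^{\ast n}(K) < +\infty$ for every compact $K \subset G$. I would split the argument into cases according to the volume growth of $G$, after first reducing to the compactly generated setting.

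For the reduction, recall that every lcsc group is the increasing union of a sequence of open compactly generated subgroups $H_n$, and since $G$ is noncompact, at least one $H_n$ can be chosen to be noncompact as well. If I can produce a symmetric probability measure $\nu_0$ on $H_n$, equivalent to its Haar measure, with transient random walk, then convolving with a symmetric probability measure of density with respect to $\lambda$ supported on a compact symmetric neighborhood of $e$ (after an adjustment by $\Delta^{-1/2}$ to enforce $G$-symmetry) gives the desired measure $\nu$ on $G$. Transience is preserved because $H_n$ is open, so every compact subset of $G$ meets only finitely many left cosets of $H_n$ that the walk can reach in any prescribed number of steps.

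For $G$ compactly generated and noncompact, I would invoke the Varopoulos--Saloff-Coste dichotomy: either $G$ has polynomial volume growth of degree at most $2$, or every symmetric compactly supported probability measure equivalent to $\lambda$ already defines a transient random walk. In the latter, generic case, one just picks such a measure (a density supported on a compact neighborhood of $e$, adjusted by $\Delta^{-1/2}$ to be symmetric). In the polynomial-growth case, Losert's extension of Gromov's theorem identifies $G$, up to a compact normal subgroup $K$, with one of $\Z$, $\R$, $\Z^2$ or $\R^2$. On the abelian quotient $H = G/K$, I would take a symmetric $\al$-stable probability measure $\nu_0$ with $\al \in (0,1)$ when $\mathrm{rank}(H) = 1$ (resp.\ any $\al \in (0,2)$ when $\mathrm{rank}(H) = 2$); classical Fourier analysis on abelian groups shows that the associated random walk is transient. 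Pulling $\nu_0$ back to $G$ (convolving with the normalized Haar measure on $K$ so as to produce a density equivalent to $\lambda$) and symmetrizing against $\Delta$ yields the desired $\nu$; transience transfers from $H$ back to $G$ because the quotient map $G \to H$ is proper, so preimages of compacta are compact.

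The main obstacle is the polynomial-growth case: it leans on Losert's structure theorem and requires care to transfer the stable measure through the compact extension while preserving both symmetry and equivalence to $\lambda$. A more self-contained alternative, which I would fall back on if the structural route proves awkward, is to pick a compact symmetric generating neighborhood of $e$, consider the associated word-length function $\ell$, and directly write down a heavy-tailed density $\psi(g) = c(1 + \ell(g))^{-(1+\al)}$ with small $\al > 0$; one then estimates $\nu^{\ast n}(K)$ via a large-deviations argument based on the fact that a single step of such a heavy-tailed walk already has uniformly positive probability of exceeding any prescribed scale in $\ell$.
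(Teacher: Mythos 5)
The central gap is your very first reduction: it is \emph{not} true that every noncompact lcsc group contains a noncompact open compactly generated subgroup. In a locally elliptic group every compact subset generates a relatively compact subgroup, so all open compactly generated subgroups are compact; concrete examples are $G = \mathbb{Q}_p = \bigcup_n p^{-n}\Z_p$ and infinite locally finite discrete groups such as $\bigoplus_{n \in \N} \Z/2\Z$. For such groups your case analysis never gets started, and they are not covered by the Varopoulos--Saloff-Coste dichotomy either, which concerns compactly generated groups. The danger is not hypothetical: on $\mathbb{Q}_p$, taking $u_m$ the normalized Haar measure of $p^{-m}\Z_p$ and $\nu = c\sum_m p^{-m} u_m$, one has $u_m \ast u_{m'} = u_{\max(m,m')}$, so $\nu^{\ast n}(\Z_p) = E\bigl[p^{-M_n}\bigr] \asymp 1/n$ where $M_n$ is the maximum of $n$ i.i.d.\ levels; this $\nu$ is symmetric, adapted, \emph{equivalent to Haar} and recurrent. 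So on these groups transience really must be engineered by choosing tails heavy enough (e.g.\ weights with $\sum_{j \geq m} t_j \asymp p^{-\al m}$, $\al < 1$, gives $\nu^{\ast n}(\Z_p) \asymp n^{-1/\al}$, summable) rather than inherited from a subgroup, and your proposal contains no mechanism for this.

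Even where your dichotomy applies, the constructions repeatedly lose the requirement that $\nu$ be \emph{equivalent} to Haar. A compactly supported probability measure on a noncompact group is never equivalent to $\lambda$ (so ``symmetric compactly supported probability measure equivalent to $\lambda$'' is vacuous), and convolving a measure carried by an open subgroup $H$ with a compactly supported density yields a measure carried by finitely many cosets of $H$, not a measure equivalent to $\lambda$. Repairing this forces you to mix in a full-support symmetric component, and you must then justify that transience survives: for symmetric measures this is true but is itself a nontrivial comparison theorem (if $\nu' \geq c\,\nu$ with $\nu$ symmetric and transient, then $\nu'$ is transient; Baldi--Lohou\'{e}--Peyri\`{e}re), which you never invoke. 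Your fallback construction is also broken: the density $c(1+\ell(g))^{-(1+\al)}$ with ``small $\al > 0$'' is not integrable once the growth is at least quadratic (shell volumes $\asymp n$ already force $\al > 1$) and is never integrable on groups of exponential growth, while the claim that a single step ``has uniformly positive probability of exceeding any prescribed scale'' is false for any probability measure. Finally, the Losert-type list must include $\R \times \Z$ and finite extensions of all the model groups. For contrast, the paper's dichotomy is amenable versus nonamenable, not growth-based: in the nonamenable case \emph{every} $\nu \sim \lambda$ is transient, since $\lambda(L)\,\nu^{\ast n}(K) \leq \|\lambda(\nu)\|^n \, \|1_L\|_2 \|1_{KL}\|_2$ by \cite[Th\'{e}or\`{e}me 4]{BC73}; in the amenable case a Kaimanovich--Vershik construction $\nu = \sum_n t_n \nu_n$ uses almost invariance of high convolution powers plus noncompactness (many disjoint translates $g^{-1}B_n$) to force $\nu^{\ast n}(B_n) \leq 5\eps_n$. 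That argument needs no structure theory, no compact generation, and produces equivalence with Haar by construction, which is exactly why it covers the locally elliptic groups on which your approach founders.
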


\begin{proof}
Choose a left Haar measure $\lambda$ on $G$. Denote by $\Delta$ the modular function on $G$.

When $G$ is nonamenable, any probability measure $\nu$ on $G$ that is equivalent with the Haar measure satisfies $\|\lambda(\nu)\| < 1$ by \cite[Th\'{e}or\`{e}me 4]{BC73}. Any probability measure $\nu$ on $G$ with $\|\lambda(\nu)\| < 1$ defines a transient random walk. Indeed, when $K \subset G$ is a compact subset and $L$ is any compact neighborhood of $e$ in $G$, we get that $\lambda(g L \cap KL) = \lambda(L) > 0$ for all $g \in K$. Thus,
$$\lambda(L) \, \nu^{\ast n}(K) \leq \langle \lambda(\nu^{\ast n}) 1_L, 1_{KL} \rangle \leq \|\lambda(\nu)\|^n \, \|1_L\|_2 \, \|1_{KL}\|_2 \; ,$$
so that transience follows.

Next assume that $G$ is amenable. We follow very closely the proof of \cite[Theorem 4.3]{KV83}. Since for $n\geq 3$ the sequence $n^{-2/n}$ is increasing to $1$, we can define the sequence $(t_n)_{n\geq 1}$ of positive numbers by
\begin{align*}
t_n=\begin{cases}3^{-5/3}&\text{ if } n\in \{1,2,3\}\\
n^{-2/n}-(n-1)^{-2/(n-1)}&\text{ if } n\geq 4\end{cases} \; .
\end{align*}
By construction, $\sum_{k=1}^{\infty}t_k=1$ and $(\sum_{k=1}^{n}t_k)^{n}=n^{-2}$ for every $n\geq 3$. Set $\eps_n=1$ if $n=1,2$ and $\eps_n=n^{-2}$ if $n\geq 3$. Let $B_n$ be an increasing sequence of compact subsets of $G$ with interior $B_n^\circ$ such that $\bigcup_{n=1}^{\infty} B_n^\circ =G$. Because $G$ is not compact, for every $n\geq 1$ we can choose a finite subset $\mathcal{F}_n\subset G$ such that $|\mathcal{F}_n|>\eps_n^{-1}$ and such that $(g^{-1}\cdot B_n)_{g\in \mathcal{F}_n}$ is a family of disjoint sets.

Since $G$ is amenable, we can inductively choose symmetric probability measures $\nu_n$ on $G$ with compact support $A_n$ such that $B_n \subset A_n$, such that $\nu_n$ is equivalent with the restriction of $\lambda$ to $A_n$ and
\begin{align*}
\|g\nu_n-\nu_n\| \leq \eps_n \quad\text{for every}\;\; g\in \Bigl(\{e\}\cup\bigcup_{j=1}^{n}\mathcal{F}_j\Bigr)\;\Bigl(\{e\}\cup\bigcup_{k=1}^{n-1}A_k\Bigr)^{n-1} \; .
\end{align*}
Here $\|\cdot \|$ denotes the total variation norm. Set $\nu=\sum_{n=1}^{\infty} t_n\nu_n$. By construction $\nu$ is a symmetric probability measure on $G$ that is equivalent with $\lambda$. We claim that the random walk associated to $\nu$ is transient. Just as in the proof of \cite[Theorem 4.3]{KV83} one shows that for every $n\geq 4$ we have that
\begin{align}\label{eq:almost invariance convolution powers}
\|g\nu^{\ast n}-\nu^{\ast n}\|\leq 4\eps_n \quad\text{for every $g\in \mathcal{F}_n$.}
\end{align}
We also have that $\nu^{\ast n}(B_n)\leq 5\eps_n$ for every $n\geq 4$. Indeed, assume instead that $\nu^{\ast n}(B_n)> 5\eps_n$ for some $n\geq 4$. Then it follows from \eqref{eq:almost invariance convolution powers} that $\nu^{\ast n}(g^{-1}B_n)=(g\nu^{\ast n})(B_n)>\eps_n$ for every $g\in \mathcal{F}_n$. But the sets $g^{-1} B_n$ are pairwise disjoint and we get that
\begin{align*}
1\geq \sum_{g\in \mathcal{F}_n}\nu^{\ast n}(g^{-1} B_n)>|\mathcal{F}_n| \, \eps_n>1 \; ,
\end{align*}
which is a contradiction. Finally, if $K\subset G$ is an arbitrary compact set, then for $n$ large enough we have that $K\subset B_n$. Since $\sum_{n=1}^{\infty}\nu^{\ast n}(B_n)\leq 5\sum_{n=1}^{\infty}\eps_n<+\infty$, it follows that $\sum_{n=1}^{\infty}\nu^{\ast n}(K)<+\infty$.
\end{proof}

\begin{proof}[{Proof of Theorem \ref{thm.ME-embedding-noncompact}}]
The implication $3\Rightarrow 2$ is trivial.

$2\Rightarrow 1$. Suppose that $G$ is compact and that $G$ admits $\Z$ as an ME subgroup. Then we find a pmp action $\Z\actson (X,\mu)$ and a cocycle $\alpha : \Z\times X\rightarrow G$ such that the skew product action $\Z\actson X \times G : n\cdot(x,g)=(n\cdot x,\al(n,x)g)$ admits a fundamental domain. Since this action is also pmp and $\Z$ is infinite, this is absurd.

$1 \Rightarrow 3$. By Lemma \ref{lem.transient-random-walk}, we can take a transient probability measure $\nu$ on $G$ that is equivalent with the Haar measure. Define $(X,\mu) = (G,\nu)^\Z$ with the Bernoulli action $\Z \actson X$ and $\Om = X \times G$. Then define
$$\Z \times G \actson \Om : (n,g) \cdot (x,k) = (n \cdot x, \al(n,x) k g^{-1}) \; ,$$
where $\al : \Z \times X \to G$ is the unique $1$-cocycle satisfying $\al(1,x) = x_0$. By Proposition \ref{prop.ME-embedding}, the action $\Z \times G \actson \Om$ defines an ME embedding of $\Z$ into $G$. Weak mixing follows from Proposition \ref{prop.skew-ergodicity}.
\end{proof}

\section{Application: every nonamenable lcsc group admits strongly ergodic actions of any type}\label{sec.proof-C}

The first two statements of Theorem \ref{thm.main-actions-nonamenable} will be deduced as an immediate consequence of the following more general result. The last statement will similarly follow from Theorem \ref{thm.very-general-actions-associated-flow} below.

\begin{theorem}\label{thm.very-general-actions}
Let $G$ be any nonamenable lcsc group, $k \geq 1$, $\Gamma = \F_{k+2}$ and $\pi : \Gamma \to \F_k$ the natural surjective homomorphism using the first $k$ generators. There exists a $1$-cocycle $\al : \Gamma \times X \to G$ for the Bernoulli action $\Gamma \actson (X,\mu) = (X_0,\mu_0)^\Gamma$ of the form \eqref{eq.my-1-cocycle} such that the following holds.
\begin{enumlist}
\item For any nonsingular ergodic action $\F_k \actson (U,\phi)$, the action
$$\Gamma \actson U \times X \times G : g \cdot (u,x,h) = (\pi(g) \cdot u, g \cdot x, \al(g,x)h)$$
admits a fundamental domain.
\item The resulting action $G \actson Y = \Gamma \backslash (U \times X \times G)$ by right translation in the last variable is essentially free, weakly mixing and nonamenable.
\item For every ergodic pmp action $G \actson (Z,\zeta)$, the flow of weights of $L^\infty(Y \times Z) \rtimes G$ is isomorphic with the associated flow of $\F_k \actson U$.
\item If $\F_k \actson U$ is strongly ergodic, then $G \actson Y$ is stably strongly ergodic: for every strongly ergodic pmp action $G \actson (Z,\zeta)$, the diagonal action $G \actson Y \times Z$ is strongly ergodic.
\end{enumlist}
\end{theorem}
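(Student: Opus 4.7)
The plan is to define the cocycle $\al$ via \eqref{eq.my-1-cocycle} by taking $X_0=G^{k+2}$ with product measure $\mu_0=\nu^{\otimes(k+2)}$, where $\nu$ is a symmetric probability measure equivalent with the Haar measure and satisfying $\|\lambda(\nu)\|<(2(k+2)-1)^{-1}$ (obtained as a sufficiently high convolution power of any Haar-equivalent probability measure, by \cite[Th\'{e}or\`{e}me 4]{BC73}), and by letting $\vphi_i:X_0\to G$ be the $i$-th coordinate projection. The maps $\vphi_i$ are then independent, and Proposition \ref{prop.ME-embedding}(2) applied to $\Gamma=\F_{k+2}$ yields a fundamental domain $\cD\subset X\times G$ for the skew-product action $\Gamma\actson X\times G$. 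Essential freeness of the Bernoulli action $\Gamma\actson X$ carries over to $\Gamma\actson U\times X\times G$, and $U\times\cD$ is a fundamental domain for the latter, giving item~1.

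For items~2 and~4, the guiding principle is that Proposition \ref{prop.stability-of-strong-ergodicity-under-correspondence} (and its direct analog for ordinary ergodicity) transforms (stable) strong ergodicity of $G\actson Y\times Z$ into (strong) ergodicity of the skew action $\Gamma\actson U\times X\times Z:\gamma\cdot(u,x,z)=(\pi(\gamma)u,\gamma x,\al(\gamma,x)z)$, for any (strongly) ergodic pmp action $G\actson(Z,\zeta)$. The crucial step is to isolate the subgroup $H=\langle a_{k+1},a_{k+2}\rangle\cong\F_2\subset\Gamma$, which acts trivially on $U$ and whose restricted cocycle $\al|_H$ only depends on the coordinates $x_h$ with $h$ in the $H$-orbit of~$e$. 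Grouping the $\Gamma$-coordinates by $H$-orbits yields a decomposition $X\cong X_0^H\times\tilde X$ of $H$-spaces, with $H\actson X_0^H$ the ordinary Bernoulli action and $H\actson\tilde X$ a product of infinitely many Bernoulli actions. Proposition \ref{prop.skew-ergodicity} (resp.\ Proposition \ref{prop.skew-strong-ergodicity}) applied to $H\actson X_0^H\times Z$, combined with the weakly mixing (resp.\ stably strongly ergodic) Bernoulli $H$-action on $\tilde X$, yields (strong) ergodicity of $H\actson X\times Z$. Lemma \ref{lem.criteria-strongly-ergodic} then reduces any (approximately) $\Gamma$-invariant function on $U\times X\times Z$ to one (approximately) depending only on $u$; since the $\Gamma$-action on $U$ factors through $\pi$, (strong) ergodicity of $\F_k\actson U$ concludes the argument. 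Essential freeness of $G\actson Y$ is immediate from that of $\Gamma\actson X$, and nonamenability follows from the $G$-equivariant factor map $Y\twoheadrightarrow Y_0:=\Gamma\backslash(X\times G)$: the action $G\actson Y_0$ is the ME-induced action from the essentially free pmp action of the nonamenable group $\F_{k+2}$, hence is Zimmer-nonamenable by standard ME theory, and amenability of $G$-actions is inherited by factors.

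For item~3, the correspondence $\Gamma\times G\actson U\times X\times G\times Z$ (with $\Gamma$ acting trivially on $Z$ and $G$ acting diagonally on $G\times Z$) yields a Morita equivalence between $L^\infty(Y\times Z)\rtimes G$ and $L^\infty(U\times X\times Z)\rtimes\Gamma$, so these von Neumann algebras share the same flow of weights. Since $\mu\times\zeta$ is $\Gamma$-invariant, only the $U$-component contributes to the Radon-Nikodym cocycle, and by Section~\ref{sec.flows} the flow of weights equals the ergodic decomposition of the $\Gamma$-action on $U\times X\times Z\times\R$ defined by $\gamma\cdot(u,x,z,s)=(\pi(\gamma)u,\gamma x,\al(\gamma,x)z,\om_U(\pi(\gamma),u)+s)$. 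Using the ergodicity of $H\actson X\times Z$ from the previous paragraph with $(u,s)$ treated as parameters, $\Gamma$-invariant functions depend only on $(u,s)$, so the invariant algebra coincides with $L^\infty(U\times\R)^{\F_k}$ --- precisely the algebra describing the associated flow of $\F_k\actson U$. The main technical point, threading through items~2--4, is the adaptation of Propositions \ref{prop.skew-ergodicity}--\ref{prop.skew-strong-ergodicity} from the full free group $\F_k$ acting on $X_0^{\F_k}$ to the subgroup $H\subset\Gamma$ acting on $X_0^\Gamma$; this is handled by the coset decomposition above, using that $\al|_H$ is supported on the $H$-orbit of~$e$.
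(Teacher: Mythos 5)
Your construction and the arguments for items 1, 3, 4 and for ergodicity/weak mixing in item 2 are essentially the paper's own proof: the same choice of $\nu$ with $\|\lambda(\nu)\|<(2k+3)^{-1}$, the same base $X_0=G^{k+2}$ with coordinate maps, the same splitting of $\Gamma$ into $\F_k$ (acting on $U$ through $\pi$) and $H=\langle a_{k+1},a_{k+2}\rangle\cong\F_2$ (carrying the cocycle into $G$), and the same chain Proposition~\ref{prop.ME-embedding} $\rightarrow$ Propositions~\ref{prop.skew-ergodicity}/\ref{prop.skew-strong-ergodicity} $\rightarrow$ Lemma~\ref{lem.criteria-strongly-ergodic} $\rightarrow$ Proposition~\ref{prop.stability-of-strong-ergodicity-under-correspondence}, and for item 3 the transfer to the Maharam extension of the $\Gamma$-action, where only $\om_U$ contributes. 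One point of caution in your coset decomposition: writing $X\cong X_0^H\times\tilde X$ and ``combining'' the strongly ergodic skew product on $X_0^H\times Z$ with the Bernoulli action on $\tilde X$ is not covered by any of the cited results (strong ergodicity of a diagonal product needs an argument, e.g.\ spectral gap of the Bernoulli factor). The clean route, which you yourself gesture at in your last sentence and which is the paper's ``second key observation'', is that the restriction of the $\Gamma$-Bernoulli action and of $\al$ to $H$ is \emph{itself} of the form \eqref{eq.my-1-cocycle} for the $H$-Bernoulli action with enlarged base $X_0^R$, $R$ a transversal of $H\backslash\Gamma$ containing $e$ (the cocycle only reads the $X_0$-component at the coset of $e$), so Propositions~\ref{prop.skew-ergodicity} and \ref{prop.skew-strong-ergodicity} apply verbatim, with no combination step needed.

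There is, however, one genuine error: your nonamenability argument. The claim ``amenability of $G$-actions is inherited by factors'' is false. The translation action $G\actson (G,\nu)$ is amenable in Zimmer's sense for every lcsc group $G$, yet it factors onto the trivial action on a point, which is nonamenable whenever $G$ is nonamenable. The true permanence property goes the other way (an extension of an amenable action is amenable), which yields ``$G\actson Y$ nonamenable $\Rightarrow$ $G\actson Y_0$ nonamenable'' --- the converse of what you need; and since the fibers of $Y\to Y_0$ are copies of $(U,\phi)$ carrying only a quasi-invariant (in the interesting applications, type III) measure, no relatively measure-preserving variant of the factor argument is available either. The paper's argument avoids this entirely: the restriction of the skew product action $\Gamma\actson U\times X\times Z$ to $H\cong\F_2$ is pmp (it acts trivially on $U$ and preserves $\mu\times\zeta$), the restriction of an amenable action to a closed subgroup is amenable, and a pmp action of a nonamenable group is never amenable; hence $\Gamma\actson U\times X\times Z$ is nonamenable, and this transfers to $G\actson Y\times Z$ through the correspondence. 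You should replace your factor-map argument by this restriction argument; the rest of your proposal then goes through.
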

\begin{proof}
As in the proof of Theorem \ref{thm.ME-embedding-nonamenable}, choose a probability measure $\nu$ on $G$ with full support such that $\nu$ is absolutely continuous w.r.t.\ the Haar measure of $G$ and $\|\lambda(\nu)\| \leq (2k+3)^{-1}$. Define $(X_0,\mu_0) = (G,\nu)^{k+2}$ and define the maps $\vphi_i : X_0 \to G : \vphi_i(g) = g_i$ for all $i \in \{1,\ldots,k+2\}$. Then consider the Bernoulli action $\Gamma \actson (X,\mu) = (X_0,\mu_0)^\Gamma$ and the $1$-cocycle $\al : \Gamma \times X \to G$ given by \eqref{eq.my-1-cocycle}.

We view $\Gamma = \Lambda_1 \ast \Lambda_2$ where $\Lambda_1 \cong \F_k$ is generated by the first $k$ generators of $\F_{k+2}$ and $\Lambda_2$ by the last two generators. Note that $\pi(g) = g$ for all $g \in \Lambda_1$ and $\pi(g) = e$ for all $g \in \Lambda_2$.

1.\ By Proposition \ref{prop.ME-embedding}, the skew product action $\Gamma \actson X \times G$ admits a fundamental domain. A fortiori, the given action $\Gamma \actson U \times X \times G$ admits a fundamental domain.

2.\ Let $G \actson (Z,\zeta)$ be any ergodic pmp action. We have to prove that
\begin{equation}\label{eq.my-good-action}
\Gamma \actson U \times X \times Z : g \cdot (u,x,z) = (\pi(g) \cdot u, g \cdot x, \al(g,x) \cdot z)
\end{equation}
is ergodic and nonamenable. The restriction of this action to $\Lambda_2$ is pmp and $\Lambda_2$ is a nonamenable group. So the action $\Lambda_2 \actson U \times X \times Z$ is nonamenable. A fortiori, $\Gamma \actson U \times X \times Z$ is nonamenable. By Proposition \ref{prop.skew-ergodicity}, $L^\infty(U \times X \times Z)^{\Lambda_2} = L^\infty(U) \ot 1 \ot 1$. Then ergodicity follows because $\Lambda_1 \actson U$ is ergodic.

3.\ The flow of weights of $L^\infty(Y \times Z) \rtimes G$ is given by the natural action of $\R$ on the ergodic decomposition of the Maharam extension of $\Gamma \actson U \times X \times Z$. Since the skew product action on $X \times Z$ is measure preserving, this Maharam extension is given by $\Gamma \actson \Util \times X \times Z$, where $\Lambda_1 \actson \Util$ is the Maharam extension of $\Lambda_1 \actson U$. But then the same argument as in 2 identifies $L^\infty(\Util \times X \times Z)^\Gamma$ with $L^\infty(\Util)^{\Lambda_1}$.

4.\ As in the proof of Theorem \ref{thm.ME-embedding-nonamenable}, assuming that $\Lambda_1 \actson U$ and $G \actson Z$ are strongly ergodic, we have to prove that the action in \eqref{eq.my-good-action} is strongly ergodic. Take a bounded sequence $F_n \in L^\infty(U \times X \times Z)$ such that $F_n(g \cdot (u,x,z)) - F_n(u,x,z) \to 0$ for all $g \in \Gamma$ and a.e.\ $(u,x,z) \in U \times X \times Z$. By Proposition \ref{prop.skew-strong-ergodicity}, the skew product action $\Lambda_2 \actson X \times Z$ is strongly ergodic. By Lemma \ref{lem.criteria-strongly-ergodic} and after passage to a subsequence, we find a bounded sequence $H_n \in L^\infty(U)$ such that $F_n(u,x,z) - H_n(u) \to 0$ for a.e.\ $(u,x,z) \in U \times X \times Z$. Expressing approximate invariance of $F_n$ under $\Lambda_1$, using that $\Lambda_1 \actson U$ is strongly ergodic and applying Lemma \ref{lem.criteria-strongly-ergodic}, we can again pass to a subsequence such that $H_n(u) - t_n \to 0$ for a.e.\ $u \in U$, where $(t_n)_n$ is a bounded sequence of complex numbers. So, $F_n(u,x,z) - t_n \to 0$ for a.e.\ $(u,x,z) \in U \times X \times Z$, proving strong ergodicity of the action \eqref{eq.my-good-action}.
\end{proof}

Recall from Section \ref{sec.flows} that when $G$ is a nonunimodular lcsc group, the flow of weights of a crossed product $L^\infty(Y) \rtimes G$ is typically different from the associated flow of $G \actson Y$. In particular, recall from Section \ref{sec.flows} that the associated flow governs the existence of an equivalent $G$-invariant measure and that for nonunimodular groups $G$, a pmp action may give rise to a type III crossed product.

The following variant of Theorem \ref{thm.very-general-actions} shows that also this associated flow can be arbitrary.

\begin{theorem}\label{thm.very-general-actions-associated-flow}
Let $G$ be any nonamenable lcsc group, $k \geq 1$, $\Gamma = \F_{k+3}$ and $\pi : \Gamma \to \F_k$ the natural surjective homomorphism using the first $k$ generators. There exists a $1$-cocycle $\al : \Gamma \times X \to G$ for a \emph{nonsingular} Bernoulli action $\Gamma \actson (X,\mu)$, still of the form \eqref{eq.my-1-cocycle}, such that the following holds.
\begin{enumlist}
\item For any nonsingular ergodic action $\F_k \actson (U,\phi)$, the action $\Gamma \actson U \times X \times G : g \cdot (u,x,h) = (\pi(g) \cdot u, g \cdot x, \al(g,x)h)$ admits a fundamental domain.
\item The resulting action $G \actson Y = \Gamma \backslash (U \times X \times G)$ by right translation in the last variable is essentially free, weakly mixing and nonamenable.
\item For every ergodic pmp action $G \actson (Z,\zeta)$, the associated flow of $G \actson Y \times Z$ is isomorphic with the associated flow of $\F_k \actson U$.
\item If $\F_k \actson U$ is strongly ergodic, then $G \actson Y$ is stably strongly ergodic: for every strongly ergodic pmp action $G \actson (Z,\zeta)$, the diagonal action $G \actson Y \times Z$ is strongly ergodic.
\end{enumlist}
\end{theorem}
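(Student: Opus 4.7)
The plan is to adapt the proof of Theorem~\ref{thm.very-general-actions}, replacing the pmp Bernoulli action of $\F_{k+2}$ with a carefully chosen \emph{nonsingular} Bernoulli action of $\Gamma = \F_{k+3}$ whose Radon--Nikodym cocycle cancels the modular-function contribution appearing in the associated flow formula from Section~\ref{sec.flows}. Write $\Gamma = \Lambda_1 \ast \Lambda_2$ with $\Lambda_1 = \F_k$ (the image of $\pi$) and $\Lambda_2 = \F_3$ (killed by $\pi$). By the formula of Section~\ref{sec.flows} applied with $H = \Gamma$ (discrete, so $\Delta_\Gamma \equiv 1$) and $\Om/G = U \times X$, the associated flow of $G \actson Y \times Z$ is the $\R$-ergodic decomposition of
\begin{equation*}
\Gamma \actson (U \times X \times Z) \times \R : g \cdot (u,x,z,s) = \bigl(\pi(g) u,\, g x,\, \al(g,x) z,\, \om_U(\pi(g),u) + \sigma(g,x) + s\bigr),
\end{equation*}
where $\sigma(g,x) := \om_X(g,x) - \log\Delta_G(\al(g,x))$. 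The key reduction is to construct the nonsingular Bernoulli action so that $\sigma(g,x) = b(g \cdot x) - b(x)$ for an explicit function $b : X \to \R$; a change of variable $s \mapsto s - b(x)$ then absorbs $\sigma$, and using the ergodicity of $\Lambda_2 \actson X \times Z$ (as in Theorem~\ref{thm.very-general-actions}), the ergodic decomposition reduces to that of the Maharam extension $\F_k \actson U \times \R$, which by definition is the associated flow of $\F_k \actson U$.

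For the construction, choose $\nu$ a probability measure on $G$ equivalent to Haar with $\|\lambda(\nu)\| < (2k+5)^{-1}$ (achievable by Kesten and convolution) and $\int \Delta_G \, d\nu < \infty$. Set $\nu' = c\Delta_G \nu$ with $c^{-1} = \int \Delta_G \, d\nu$, so that $\nu'$ is a probability equivalent to $\nu$. Take $X_0 = G^{k+3}$ and $\vphi_i(g_1,\ldots,g_{k+3}) = g_i$; on $X = X_0^\Gamma$ define the product measure $\mu = \bigotimes_{g \in \Gamma} \bigotimes_{i=1}^{k+3} \rho_{g,i}$ with $\rho_{g,i} \in \{\nu,\nu'\}$ chosen by a combinatorial rule on the Cayley graph of $\Gamma$. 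The rule ensures that (i)~each shift by a generator changes $\rho_{g,i}$ at only finitely many $(g,i)$, so that Kakutani's theorem yields nonsingularity, and (ii)~the resulting Radon--Nikodym cocycle satisfies $\om_X(a_i,x) = \log\Delta_G((x_e)_i) + T_i(x)$ with $T_i(x)$ supported on finitely many coordinates and the collection $(T_i)_i$ being the coboundary of an explicit $b$. The extra generator in $\F_{k+3}$ relative to $\F_{k+2}$ provides the combinatorial slack needed to arrange a consistent $b$; alternatively, one can derive the weights $\rho_{g,i}$ from the Green's function of the simple random walk on $\F_{k+3}$, whose exponential decay (a consequence of nonamenability) ensures Kakutani $\ell^2$-summability and yields the prescribed cohomology class.

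Items~1, 2 and~4 follow essentially as in the proof of Theorem~\ref{thm.very-general-actions}. Item~1 (fundamental domain) follows from Proposition~\ref{prop.ME-embedding}: since $\|\lambda(\rho_{g,i})\| \leq \max(\|\lambda(\nu)\|,\|\lambda(\nu')\|) < (2(k+3)-1)^{-1}$ uniformly in $(g,i)$, the same convergent series estimate applies. Items~2 and~4 require adaptations of Propositions~\ref{prop.skew-ergodicity} and~\ref{prop.skew-strong-ergodicity} to the nonsingular setting; since the nonsingular perturbation of $\mu$ is concentrated near the identity of $\Gamma$ (sites $g$ far from $e$ still carry the pmp base $\nu^{\otimes(k+3)}$), the arguments of Lemmas~\ref{lem.random-ergodic} and~\ref{lem.random-strongly-ergodic} go through with only notational changes, yielding the essential freeness, weak mixing, nonamenability, and stable strong ergodicity of $G \actson Y$.

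The main technical obstacle is the construction of paragraph~2: producing a nonsingular Bernoulli action whose Radon--Nikodym cocycle lies in the cohomology class of $\log\Delta_G \circ \al$ while admitting only finite-support Radon--Nikodym contributions under each generator shift. The triple tension between nonsingularity (Kakutani $\ell^2$-summability), the cohomological prescription, and preservation of the random-ergodic arguments of Section~3 is resolved by exploiting the tree structure of $\F_{k+3}$ together with the exponential decay of random-walk Green's functions on a nonamenable free group; the extra generator over $\F_{k+2}$ provides precisely the combinatorial room needed to make these requirements simultaneously satisfiable.
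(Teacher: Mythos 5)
Your overall skeleton (the associated-flow formula of Section \ref{sec.flows}, cancellation of $\log\Delta_G\circ\al$ against the Radon--Nikodym cocycle, then reduction of the ergodic decomposition through the kernel of $\pi$) matches the paper, but there are two genuine gaps. First, the heart of the matter --- the construction of the nonsingular Bernoulli action --- is asserted rather than carried out: your ``combinatorial rule on the Cayley graph'' and the alternative ``Green's function'' weights are never specified, and properties (i) and (ii) are exactly what needs proving. There is also a concrete normalization problem you overlook: with $\nu' = c\,\Delta_G\nu$ and $c\neq 1$, the cocycle identity acquires a constant offset $\log c$, which is a nontrivial homomorphism $\Gamma\to\R$ and \emph{not} a coboundary, so it would shift the flow. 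The paper sidesteps all of this with an explicit construction achieving the \emph{exact} identity $\om_X(g,x) = \log\Delta_G(\al(g,x))$ (no coboundary $b$ at all): the first $k+2$ coordinate maps take values in $G_0 = \Ker\Delta$ via a measure $\nu_0$ on $G_0$, so those generators act pmp with vanishing $\Delta$-contribution, while the single extra generator $a$ uses two measures $\nu_1\sim\nu_2$ with $d\nu_1/d\nu_2 = \Delta$ exactly (built from a symmetric density $H$ with $d\eta_1/d\lambda = H$, $d\eta_2/d\lambda = H\Delta^{-1}$, which kills the stray constant), the site measure $\mu_h$ being $\mu_1$ or $\mu_2$ according to whether the word $h$ ends in a positive power of $a$.

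Second, your claim that Lemmas \ref{lem.random-ergodic} and \ref{lem.random-strongly-ergodic} and Propositions \ref{prop.skew-ergodicity} and \ref{prop.skew-strong-ergodicity} extend to the nonsingular setting ``with only notational changes'' is unjustified and is precisely the difficulty the paper's architecture is designed to avoid. Those proofs use the pmp structure essentially: in Lemma \ref{lem.random-ergodic} the coordinate-insertion maps $\zeta\mapsto\zeta_{n,k}$ are isometric because $\pi_k$, $\pi'_k$ are measure preserving, and in Lemma \ref{lem.random-strongly-ergodic} the orthogonality $\langle\pi(g)\xi,\xi\rangle = 0$ comes from integrating out a mean-zero coordinate under the product measure; a nonsingular base introduces Radon--Nikodym factors into $\pi(g)$ that destroy both mechanisms. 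The paper's second key observation is that no extension is needed: the restriction of the action and cocycle to $\Lambda_2 = \langle a_{k+1},a_{k+2}\rangle$ is an honest \emph{pmp} cocycle of the form \eqref{eq.my-1-cocycle} (left translation by $\Lambda_2$ preserves the site-measure assignment, since it cannot alter whether a word ends in a positive power of $a$), so the existing results apply verbatim --- but they only yield $L^\infty(U)\ovt 1\ovt L^\infty(Z)^{G_0}$ because the $\Lambda_2$-cocycle lands in $G_0$; the nonsingular generator $a$, whose cocycle values have full support in $G$, is then handled by a direct invariance argument. This also corrects your explanation of the extra generator: $\F_{k+3}$ is needed not for ``combinatorial slack'' in arranging a coboundary, but because the pmp free subgroup sees only $\Ker\Delta$ and one more full-support generator must kill $L^\infty(Z)^{G_0}$. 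Indeed, in your own construction (all coordinates of full support) the extra generator would be superfluous if your claimed extensions held, which signals that the tension you acknowledge in your final paragraph is real and unresolved.
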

\begin{proof}
Denote by $\Delta$ the modular function of $G$ and let $\lambda$ be a left Haar measure on $G$.
Write $\delta = (2k+5)^{-1}$. We first prove there exist probability measures $\nu_1$ and $\nu_2$ on $G$ such that $\nu_1 \sim \nu_2 \sim \lambda$, $d\nu_1 / d\nu_2 = \Delta$ and $\|\lambda(\nu_i)\| < \delta$ for all $i \in \{1,2\}$.

Choose any strictly positive Borel function $H : G \to (0,+\infty)$ such that $H(g^{-1}) = H(g)$ for all $g \in G$ and $\int_G H \, d\lambda = 1$. Since $H$ is symmetric, also $\int_G H \Delta^{-1} \, d\lambda = 1$. We can then define the probability measures $\eta_1$ and $\eta_2$ on $G$ such that $\eta_1 \sim \eta_2 \sim \lambda$, $d\eta_1 / d \lambda = H$ and $d\eta_2 / d\lambda = H \Delta^{-1}$. By construction $d\eta_1 / d\eta_2 = \Delta$. Since $\eta_1$ and $\eta_2$ have full support, it follows from \cite[Th\'{e}or\`{e}me 4]{BC73} that $\|\lambda(\eta_i)\| < 1$ for all $i \in \{1,2\}$. Taking $k$ large enough, the convolution powers $\nu_i = \eta_i^{\ast k}$ are still equivalent to the Haar measure and satisfy $\|\lambda(\nu_i)\| < \delta$ for all $i \in \{1,2\}$. Since $\Delta$ is multiplicative, we still have that $d \nu_1 / d \nu_2 = \Delta$.

We denote the first $k+2$ free generators of $\F_{k+3}$ as $a_i$ with $i \in \{1,\ldots,k+2\}$. We denote the last ``extra'' generator as $a$. We denote by $\Lambda_1$ the subgroup generated by $a_1,\ldots,a_k$ and by $\Lambda_2$ the subgroup generated by $a_{k+1}, a_{k+2}$, so that $\Gamma = \Lambda_1 \ast \Lambda_2 \ast a^\Z$.

Denote by $G_0$ the kernel of $\Delta$. Since $G$ is nonamenable, $G_0$ is still nonamenable. As in the proof of Theorem \ref{thm.ME-embedding-nonamenable}, we can choose a probability measure $\nu_0$ on $G_0$ such that $\nu_0$ is equivalent with the Haar measure of $G_0$ and $\|\lambda(\nu_0)\|_{B(L^2(G_0))} < \delta$. Viewing $\nu_0$ as a probability measure on $G$, we also have that
$$\|\lambda(\nu_0)\|_{B(L^2(G))} = \|\lambda(\nu_0)\|_{B(L^2(G_0))} < \delta \; .$$
Define $X_0 = G^{k+3}$ and consider on $X_0$ the equivalent probability measures $\mu_1 = \nu_0^{k+2} \times \nu_1$ and $\mu_2 = \nu_0^{k+2} \times \nu_2$. Then define for every $g \in \Gamma$ the probability measure $\mu_g$ on $X_0$ by writing $\mu_g = \mu_1$ if the word $g$ ends with a positive power of the extra generator $a$ and $\mu_g = \mu_2$ otherwise. Consider the nonsingular Bernoulli action
$$\Gamma \actson (X,\mu) = \prod_{h \in \Gamma} (X_0,\mu_h) : (g \cdot x)_h = x_{g^{-1}h} \; .$$
For every $i \in \{1,\ldots,k+3\}$, define $\vphi_i : X_0 \to G : \vphi_i(x) = x_i$ for all $x \in X_0 = G^{k+3}$. Denote by $\al : \Gamma \times X \to G$ the unique $1$-cocycle satisfying $\al(a_i,x) = \vphi_i(x_e)$ for all $i \in \{1,\ldots,k+2\}$ and $\al(a,x) = \vphi_{k+3}(x_e)$.

Denote by $\om_X : \Gamma \times X \to \R$ the logarithm of the Radon-Nikodym $1$-cocycle. The first key observation is that, by construction, $\om_X(g,x) = \log \Delta(\al(g,x))$ for all $g \in \Gamma$ and a.e.\ $x \in X$. The second key observation is that the restriction of the action $\Gamma \actson X$ and the cocycle $\al$ to $\Lambda_2$ is a cocycle of the form \eqref{eq.my-1-cocycle} for the pmp Bernoulli action $\Lambda_2 \actson X$, to which our earlier results can be applied.

From now on, we can almost verbatim repeat the proof of Theorem \ref{thm.very-general-actions}. For clarity, we repeat the steps of the proof.

1.\ The proof of Proposition \ref{prop.ME-embedding} remains valid for nonsingular Bernoulli actions. So the skew product action $\Gamma \actson X \times G$ admits a fundamental domain. A fortiori, the given action $\Gamma \actson U \times X \times G$ admits a fundamental domain.

2.\ Let $G \actson (Z,\zeta)$ be any ergodic pmp action. We have to prove that
\begin{equation}\label{eq.my-good-action-bis}
\Gamma \actson U \times X \times Z : g \cdot (u,x,z) = (\pi(g) \cdot u, g \cdot x, \al(g,x) \cdot z)
\end{equation}
is ergodic and nonamenable. The restriction of this action to $\Lambda_2$ is pmp, so that nonamenability follows. By Proposition \ref{prop.skew-ergodicity}, $L^\infty(U \times X \times Z)^{\Lambda_2} = L^\infty(U) \ovt 1 \ovt L^\infty(Z)^{G_0}$. Expressing invariance under $a$ then gives that $L^\infty(U \times X \times Z)^{\Lambda_2 \ast a^\Z} = L^\infty(U) \ot 1 \ot 1$. Because the action $\F_k \actson U$ is ergodic, expressing invariance under $\Lambda_1$, we conclude that the action in \eqref{eq.my-good-action-bis} is ergodic.

3.\ Let $G \actson (Z,\zeta)$ be an ergodic pmp action. As explained in Section \ref{sec.flows}, the associated flow of $G \actson Y \times Z$ is isomorphic with the action of $\R$ on the ergodic decomposition of
\begin{equation}\label{eq.again-skew}
\Gamma \actson U \times \R \times X \times Z : g \cdot (u,s,x,z) = (\pi(g) \cdot u , \Om(g,(u,x,z)) + s , g \cdot x, \al(g,x) \cdot z) \; ,
\end{equation}
where $\Om(g,(u,x,z)) = \om(g,(u,x,z)) - \log \Delta(\al(g,x))$ and $\om$ is the logarithm of the Radon-Nikodym $1$-cocycle. Denoting by $\om_U$ and $\om_X$ the logarithm of the Radon-Nikodym $1$-cocycle of $\F_k \actson U$, resp.\ $\Gamma \actson X$, we have $\om(g,(u,x,z)) = \om_U(\pi(g),u) + \om_X(g,x)$. Since $\om_X(g,x) = \log \Delta(\al(g,x))$, we conclude that the action in \eqref{eq.again-skew} is given by
$$g \cdot (u,s,x,z) = (\pi(g) \cdot u , \om_U(\pi(g),u) + s , g \cdot x, \al(g,x) \cdot z) \; .$$
It then follows from Proposition \ref{prop.skew-ergodicity} that $L^\infty(U \times \R \times X \times Z)^{\Lambda_2} = L^\infty(U \times \R) \ovt 1 \ovt L^\infty(Z)^{G_0}$. Expressing the invariance under $a$, we get that
$$L^\infty(U \times \R \times X \times Z)^{\Lambda_2 \ast a^\Z} = L^\infty(U \times \R) \ot 1 \ot 1 \; ,$$
so that finally $L^\infty(U \times \R \times X \times Z)^\Gamma = L^\infty(U \times \R)^{\F_k} \ot 1 \ot 1$. This precisely means that the associated flow of $G \actson Y \times Z$ is isomorphic with the associated flow of $\F_k \actson U$.

4.\ Let $G \actson (Z,\zeta)$ be a pmp action. Assume that $\F_k \actson U$ and $G \actson Z$ are strongly ergodic. We have to prove that $G \actson Y \times Z$ is strongly ergodic. By Proposition \ref{prop.stability-of-strong-ergodicity-under-correspondence}, it suffices to prove that the action in \eqref{eq.my-good-action-bis} is strongly ergodic. We again apply the criteria of Lemma \ref{lem.criteria-strongly-ergodic}. Let $F_n \in L^\infty(U \times X \times Z)$ be a bounded sequence such that $F_n(g \cdot (u,x,z)) - F_n(u,x,z) \to 0$ for all $g \in \Gamma$ and a.e.\ $(u,x,z) \in U \times X \times Z$.

Applying Proposition \ref{prop.skew-strong-ergodicity} to the action of $\Lambda_2$, we can pass to a subsequence and take a bounded sequence $H_n \in L^\infty(U \times Z)$ such that $F_n(u,x,z) - H_n(u,z) \to 0$ for a.e.\ $(u,x,z) \in U \times X \times Z$. Expressing the approximate invariance of $F_n$ under the generator $a$, it follows that $H_n(u,g \cdot z) - H_n(u,z) \to 0$ for a.e.\ $(g,u,z) \in G \times U \times Z$. By Lemma \ref{lem.criteria-strongly-ergodic} and a further passage to a subsequence, we find a bounded sequence $K_n \in L^\infty(U)$ such that $H_n(u,z) - K_n(u) \to 0$ for a.e.\ $(u,z) \in U \times Z$. Using strong ergodicity of $\F_k \actson U$, we can now conclude the proof.
\end{proof}

\begin{proof}[{Proof of Theorem \ref{thm.main-actions-nonamenable}}]
By e.g.\ \cite[Proposition 7.1]{VW17}, the free group $\F_3$ admits strongly ergodic actions of type III$_\lambda$ for any $\lambda \in (0,1]$. So the second and third statement of Theorem \ref{thm.main-actions-nonamenable} follow from respectively Theorem \ref{thm.very-general-actions} and Theorem \ref{thm.very-general-actions-associated-flow}. The first statement follows similarly from Theorem \ref{thm.very-general-actions} because $\F_k$ admits nonsingular actions with any prescribed associated flow, for instance by starting from a type III$_1$ action and then applying the functorial construction of \cite[Proposition 3.4]{VV22}.
\end{proof}

Using Theorem \ref{thm.ME-embedding-noncompact}, we also get the following variant of Theorem \ref{thm.main-actions-nonamenable} for arbitrary noncompact lcsc groups, in which the first point is of course well known.

\begin{theorem}
Let $G$ be a noncompact lcsc group and let $\R \actson (Z,\zeta)$ be an arbitrary ergodic flow. Then $G$ admits nonsingular, essentially free, weakly mixing actions $G \actson (X,\mu)$ of the following kinds:
\begin{enumlist}
\item admitting an equivalent $G$-invariant probability measure $\mu' \sim \mu$,
\item admitting an equivalent $G$-invariant infinite $\sigma$-finite measure $\mu' \sim \mu$,
\item with $L^\infty(X \times Y) \rtimes G$ having flow of weights $\R \actson (Z,\zeta)$ for all ergodic pmp actions $G \actson (Y,\eta)$ and the diagonal action $G \actson X \times Y$,
\item with $G \actson X \times Y$ having associated flow $\R \actson (Z,\zeta)$ for all ergodic pmp actions $G \actson (Y,\eta)$ and the diagonal action $G \actson X \times Y$.
\end{enumlist}
\end{theorem}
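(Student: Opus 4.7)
The plan is to prove this theorem by adapting the strategy of Theorem \ref{thm.main-actions-nonamenable} to arbitrary noncompact lcsc groups, replacing the $\F_2$-ME embedding of Theorem \ref{thm.main-ME-embedding} by the $\Z$-ME embedding of Theorem \ref{thm.ME-embedding-noncompact}. Concretely, I would establish analogues of Theorems \ref{thm.very-general-actions} and \ref{thm.very-general-actions-associated-flow} in which $\Z$ plays the role of the free group, and then specialize to $\Z$-actions with prescribed invariant measure or prescribed associated flow. Point 1 is well known, as the authors note; one can for instance use the Gaussian action of $G$ associated to its regular representation.

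For the pmp-Bernoulli analogue of Theorem \ref{thm.very-general-actions}, I would take a symmetric, Haar-equivalent, transient probability measure $\nu$ on $G$ from Lemma \ref{lem.transient-random-walk}, form the pmp Bernoulli shift $\Z \actson (X,\mu) = (G,\nu)^\Z$ with cocycle $\al(1,x)=x_0$, and for any nonsingular ergodic action $\Z \actson U$ set $Y = \Z \backslash (U \times X \times G)$ with $G$ acting by right translation on the $G$-coordinate. Essential freeness and weak mixing of $G \actson Y$ would follow from Theorem \ref{thm.ME-embedding-noncompact} combined with Proposition \ref{prop.skew-ergodicity} applied to $\F_1 = \Z$, and the Maharam-extension argument in the proof of point~3 of Theorem \ref{thm.very-general-actions} transfers (with $\Lambda_1 = \Z$ and without a separate $\Lambda_2$ factor) to show that the flow of weights of $L^\infty(Y \times V) \rtimes G$ equals the associated flow of $\Z \actson U$ for every ergodic pmp action $G \actson V$. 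Choosing $\Z \actson U$ with associated flow $\R \actson Z$ then yields point 3.

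For the nonsingular-Bernoulli analogue of Theorem \ref{thm.very-general-actions-associated-flow}, I would select probability measures $\nu_A, \nu_B$ on $G$ equivalent to Haar with $d\nu_B/d\nu_A = \Delta_G$ and both defining transient random walks, and form the nonsingular Bernoulli shift $\Z \actson X = \prod_{n\in\Z}(G,\mu_n)$ with $\mu_n=\nu_A$ for $n \leq 0$ and $\mu_n=\nu_B$ for $n \geq 1$. Then the Radon-Nikodym cocycle of $\Z \actson X$ equals $\log \Delta_G \circ \al$, so the skew product $\Z \actson X \times G$ preserves $\mu \times \lambda^r$ (right Haar) and admits a fundamental domain by the argument of Proposition \ref{prop.ME-embedding}. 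Repeating the computation of point~3 of Theorem \ref{thm.very-general-actions-associated-flow} with $\Lambda_1 = \Z$ gives that the associated flow of $G \actson Y \times V$ equals the associated flow of $\Z \actson U$, yielding point 4. Point 2 follows from this analogue by choosing $\Z \actson U$ to be infinite measure preserving (e.g.\ the Maharam extension of a type III$_1$ $\Z$-action), making the induced $G$-invariant measure on $Y$ infinite and $\sigma$-finite.

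The main obstacle is producing the pair $\nu_A, \nu_B$: in Theorem \ref{thm.very-general-actions-associated-flow} their transience was ensured by the Kesten-type operator-norm bound $\|\lambda(\nu_i)\| < 1$, which is only available when $G$ is nonamenable. For an amenable noncompact nonunimodular $G$ such as $\mathrm{Aff}(\R)$, one has to refine the construction in the proof of Lemma \ref{lem.transient-random-walk} to produce a symmetric positive function $H$ on $G$ with $\int H\,d\lambda = 1$ such that both $H\,d\lambda$ and $H\Delta_G^{-1}\,d\lambda$ define transient random walks. A second technical point is to verify the relevant ergodic and random ergodic theorems for the nonsingular Bernoulli $\Z$-action appearing in the previous paragraph, since Theorem \ref{thm.very-general-actions-associated-flow} circumvented this by restricting to the pmp free subgroup $\Lambda_2$, which has no analogue for $\Gamma = \Z$. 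Once these two points are addressed, the remaining verifications transfer from the nonamenable arguments without essential change.
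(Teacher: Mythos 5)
Your route has a genuine gap, and it is not confined to the nonsingular-Bernoulli case you flag at the end. Your plan rests on proving $\Z$-analogues of Theorems \ref{thm.very-general-actions} and \ref{thm.very-general-actions-associated-flow} for an \emph{arbitrary} ergodic nonsingular action $\Z \actson U$, with ``$\Lambda_1 = \Z$ and without a separate $\Lambda_2$ factor''. But in the paper's proofs of those theorems, the auxiliary free factor $\Lambda_2$ (killed by $\pi$, hence acting trivially on $U$ and on the $\R$-coordinate of the Maharam extension) is precisely what makes the invariance computation possible: one first applies Propositions \ref{prop.skew-ergodicity} and \ref{prop.skew-strong-ergodicity} to $\Lambda_2$ to reduce invariant functions to $L^\infty(U)$, resp.\ $L^\infty(U \times \R)$, and only then uses ergodicity of $\F_k \actson U$. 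With $\Gamma = \Z$ and $\pi = \id$, the same integer moves $U$, $X$ and $Z$ simultaneously; Proposition \ref{prop.skew-ergodicity} computes $L^\infty(X \times Z)^\Z$ but says nothing about $L^\infty(U \times X \times Z)^\Z$, and the statement you need is simply false for general ergodic $U$ (take $U$ a finite rotation: the diagonal product is not ergodic). At minimum you must assume $\Z \actson U$ weakly mixing, and for the flow-of-weights identification in your point 3 you need the corresponding relative statement for the infinite measure preserving Maharam extension $\Util$, where the $L^2$-Koopman argument of Lemma \ref{lem.random-ergodic} does not control $L^\infty$-invariants. Your second acknowledged obstacle --- a pair $\nu_A \sim \nu_B \sim \lambda$ with $d\nu_B/d\nu_A = \Delta$, \emph{both} transient, on an amenable noncompact nonunimodular group --- is likewise left unresolved, and it is an essential unproved step of your route, since the Kesten bound used in Theorem \ref{thm.very-general-actions-associated-flow} is unavailable.

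The paper avoids both problems and proceeds quite differently. It never proves a $\Z$-analogue of Theorems \ref{thm.very-general-actions} or \ref{thm.very-general-actions-associated-flow}; instead it imports ready-made $\Z$-systems: $U_1$, a weakly mixing infinite measure preserving Maharam extension from \cite[Corollary 6.2]{VW17}, and $U_2$ from the functorial construction of \cite[Proposition 3.4]{VV22}, which comes equipped with exactly the property you would otherwise have to prove, namely that for \emph{every} ergodic pmp $\Z$-action $Y$ the associated flow of the diagonal action $\Z \actson U_2 \times Y$ is the prescribed flow $\R \actson (Z,\zeta)$. Inducing through the ME embedding $\Z \times G \actson \Omega$ of Theorem \ref{thm.ME-embedding-noncompact}, weak mixing of $G \actson \Z \backslash (U_i \times \Omega)$ then follows because the skew product $\Z \actson X \times Y$ is ergodic and pmp (Proposition \ref{prop.skew-ergodicity}) and $U_i$ is weakly mixing, so no relative ergodic theorem over a nonsingular base is ever needed. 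For nonunimodular $G$, rather than cancelling the modular correction by a nonsingular Bernoulli cocycle with $\om_X = \log \Delta \circ \al$ (the source of your transience obstacle), the paper factors through $G_1 = G/\Ker \Delta$, a noncompact abelian --- hence unimodular --- group, applies the unimodular case there, pulls the resulting $G_1$-action back along the quotient homomorphism, and tensors with a mixing pmp Gaussian $G$-action to restore essential freeness and weak mixing. To salvage your approach you would need both a mild-mixing-type relative ergodic theorem and the transient pair; the paper's detour renders both unnecessary.
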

\begin{proof}
The Gaussian action $G \actson (X,\mu)$ associated with an infinite multiple of the regular representation of $G$ is essentially free, pmp and mixing; see e.g.\ \cite[Remark 1.1]{KPV13}.

When $G = \Z$, then for instance the nonsingular Bernoulli action of \cite[Corollary 6.2]{VW17} is one of the many essentially free, ergodic actions $\Z \actson (U_0,\phi_0)$ whose Maharam extension $\Z \actson (U_1,\phi_1) = (U_0 \times \R,\phi_0 \times \lambda)$ is weakly mixing and infinite measure preserving. Using the functorial construction of \cite[Proposition 3.4]{VV22}, we obtain an essentially free, weakly mixing action $\Z \actson (U_2,\phi_2)$ with the property that for every ergodic pmp action $\Z \actson (Y,\eta)$, the associated flow of the diagonal action $\Z \actson U_2 \times Y$ is given by $\R \actson (Z,\zeta)$.

Let $G$ be any noncompact lcsc group. Fix a weakly mixing ME embedding of $\Z$ into $G$ as given by Theorem \ref{thm.ME-embedding-noncompact}, encoded by $\Z \times G \actson \Omega$. Using the notation of the previous paragraph, it then follows that for $i \in \{1,2\}$, the natural action $G \actson \Z \backslash (U_i \times \Omega)$ is essentially free and weakly mixing. When $i=1$ and $G$ is unimodular, this action admits an equivalent infinite $G$-invariant measure. When $i=2$, the diagonal product with any ergodic pmp action $G \actson (Y,\eta)$ has a crossed product with flow of weights $\R \actson (Z,\zeta)$. When moreover $G$ is unimodular, $\R \actson (Z,\zeta)$ is also the associated flow.

So, it only remains to prove 2 and 4 when $G$ is nonunimodular. Denote by $\Delta : G \to (0,+\infty)$ the modular function. Define $G_0 = \Ker \Delta$ and $G_1 = G/G_0$. Then, $G_0$ and $G_1$ are both noncompact lcsc groups and $G_1$ is abelian. Denote by $\pi : G \to G_1$ the quotient homomorphism. Let $G \actson (X_0,\mu_0)$ be an essentially free, pmp, mixing action, as in the first paragraph of the proof. Since $G_1$ is unimodular, by the previous paragraph, we may choose a weakly mixing, nonsingular action $G_1 \actson (X_1,\mu_1)$ with property 2, resp.\ property 4. Consider the action
$$G \actson (X,\mu) = (X_0 \times X_1,\mu_0 \times \mu_1) : g \cdot (x_0,x_1) = (g \cdot x_0 , \pi(g) \cdot x_1) \; .$$
Since $G \actson (X_0,\mu_0)$ is mixing and $G_0$ is noncompact, also $G_0 \actson (X_0,\mu_0)$ is mixing. It then follows that $G \actson (X,\mu)$ is essentially free, weakly mixing and that the action satisfies property 2, resp.\ property 4.
\end{proof}

\section{\boldmath Measure equivalence embeddings for II$_1$ factors}\label{sec.ME-embeddings-factors}

Recall that a Hilbert bimodule $\bim{A}{\cK}{Q}$ is said to be \emph{coarse} if it is isomorphic with a closed $A$-$Q$-subbimodule of a multiple of $\bim{A \ot 1}{L^2(A \ovt Q)}{1 \ot Q}$. This is equivalent to saying that the $*$-homomorphism $A \otalg Q\op \to B(\cK)$ extends to a normal $*$-homomorphism $A \ovt Q\op \to B(\cK)$.

\begin{definition}\label{def.ME-II1}
Let $A$ and $B$ be finite von Neumann algebras. A \emph{measure equivalence embedding} of $A$ into $B$ is a Hilbert $A$-$B$-bimodule $\bim{A}{\cK}{B}$ together with a finite von Neumann algebra $Q \subset \End_{A\text{-}B}(\cK)$ such that
\begin{itemlist}
\item the resulting bimodules $\bim{A}{\cK}{Q\op}$ and $\bim{Q}{\cK}{B}$ are coarse,
\item $\cK$ is finitely generated as a Hilbert $Q$-$B$-bimodule,
\item the left $A$-action on $\cK$ is faithful.
\end{itemlist}

We say that $(\bim{A}{\cK}{B},Q)$ is a \emph{measure equivalence} of $A$ and $B$ if moreover
\begin{itemlist}
\item $\cK$ is finitely generated as a Hilbert $A$-$Q\op$-bimodule,
\item the right $B$-action on $\cK$ is faithful.
\end{itemlist}
\end{definition}

Note that by taking $Q = \C$, we get the following: if $A$ embeds into a finite amplification of $B$, then $A$ also admits a measure equivalence embedding into $B$; and similarly, if $A$ and $B$ are virtually isomorphic, meaning that $A$ embeds with finite index into an amplification of $B$, they are measure equivalent. The converse does not hold, as we will see in Proposition \ref{prop.counterexample}.

Although this is not really essential for proving the results in this section, for simplicity, we assume in the rest of this section that all von Neumann algebras have a separable predual and that all Hilbert spaces are separable.

\subsection{Basic results on measure equivalence embeddings}

\begin{lemma}\label{lem.transitive}
Let $A$, $B$ and $C$ be finite von Neumann algebras.
\begin{enumlist}
\item If $(\bim{A}{\cH}{B},Q_1)$, resp.\ $(\bim{B}{\cK}{C},Q_2)$, are measure equivalence embeddings of $A$ into $B$, resp.\ $B$ into $C$, then $Q := (Q_1 \ot_B Q_2)\dpr \cong Q_1 \ovt Q_2$ and $(\bim{A}{(\cH \ot_B \cK)}{C},Q)$ is a measure equivalence embedding of $A$ into $C$.
\item If $(\bim{A}{\cH}{B},Q)$ is a measure equivalence, then also $(\bim{B}{\overline{\cH}}{A},Q\op)$ is a measure equivalence.
\end{enumlist}
In particular ``admitting a measure equivalence embedding'' is a transitive relation and ``being measure equivalent'' is an equivalence relation on the class of finite von Neumann algebras.
\end{lemma}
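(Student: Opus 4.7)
The basic construction in part 1 is essentially forced. Since $Q_1 \subset \End_{A-B}(\cH)$ commutes with right multiplication by $B$, it acts on $\cH \ot_B \cK$ through the first tensor factor, and symmetrically $Q_2 \subset \End_{B-C}(\cK)$ acts through the second. These two actions commute with each other and with the $A$-left and $C$-right actions, giving a $*$-homomorphism $Q_1 \otalg Q_2 \to \End_{A-C}(\cH \ot_B \cK)$.

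The crucial step is to upgrade this algebraic $*$-homomorphism to a normal representation of the spatial tensor product $Q_1 \ovt Q_2$, while simultaneously verifying the two required coarseness conditions on $\cH \ot_B \cK$. I would do both at once via explicit coarse embeddings. First, using coarseness of $\bim{B}{\cK}{Q_2\op}$, embed $\cK$ as a $B$-$Q_2\op$-subbimodule of a multiple of $L^2(B \ovt Q_2) = L^2(B) \ovt L^2(Q_2)$, so that $\cH \ot_B \cK$ embeds as an $A$-$Q_1$-$Q_2\op$-bimodule into a multiple of $\cH \ovt L^2(Q_2)$. Next, using coarseness of $\bim{A}{\cH}{Q_1\op}$ to embed $\cH$ into a multiple of $L^2(A) \ovt L^2(Q_1)$, obtain an embedding of $\cH \ot_B \cK$ as an $A$-$(Q_1 \ovt Q_2)$-bimodule into a multiple of $L^2(A \ovt Q_1 \ovt Q_2)$. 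This simultaneously shows that the representation of $Q_1 \otalg Q_2$ extends to a faithful normal representation of $Q_1 \ovt Q_2$ on $\cH \ot_B \cK$ (so that $(Q_1 \ot_B Q_2)\dpr \cong Q_1 \ovt Q_2$) and that $\bim{A}{\cH \ot_B \cK}{Q\op}$ is coarse for $Q = Q_1 \ovt Q_2$. By the symmetric argument, using coarseness of $\bim{Q_1}{\cH}{B}$ and $\bim{Q_2}{\cK}{C}$, one obtains an embedding of $\cH \ot_B \cK$ into a multiple of $L^2(Q \ovt C)$ as a $Q$-$C$-bimodule, giving coarseness of $\bim{Q}{\cH \ot_B \cK}{C}$. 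Since both input bimodules are finitely generated, the multiplicities in this last embedding are finite, yielding finite generation of $\bim{Q}{\cH \ot_B \cK}{C}$. Faithfulness of the left $A$-action on $\cH \ot_B \cK$ is inherited from its faithfulness on $\cH$ via the constructed embedding.

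For part 2, I would take the contragredient Hilbert bimodule $\overline{\cH}$, which is naturally a $B$-$A$-bimodule, with $Q\op$ acting via the antihomomorphism $q \mapsto \overline{q}$ defined by $\overline{q}(\overline{\xi}) = \overline{q^* \xi}$; this realizes $Q\op \subset \End_{B-A}(\overline{\cH})$. Each of the four coarseness and finite-generation conditions of Definition \ref{def.ME-II1} for $(\cH, Q)$ translates directly, via the canonical anti-isomorphism $B(\cH)\op \cong B(\overline{\cH})$, into the corresponding condition for $(\overline{\cH}, Q\op)$, with left and right (and each algebra with its opposite) swapped. Faithfulness of the left $A$- and right $B$-actions on $\cH$ correspond to faithfulness of the right $A$- and left $B$-actions on $\overline{\cH}$. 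The main technical obstacle of the lemma lies in part 1: commuting normal representations of two von Neumann algebras do not in general extend to a normal representation of their spatial tensor product, and the coarseness hypotheses on both $\cH$ and $\cK$ are exactly what make the extension possible here.
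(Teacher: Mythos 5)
Your part~1 is correct in its core and uses the coarseness hypotheses in exactly the same pattern as the paper, but by a different mechanism. Writing $P_i = Q_i\op$, the paper obtains normality of the two combined actions directly from the fusion identifications $\cH \ot_B \cK = \cH \ot_{(P_1 \ovt B)} (L^2(P_1) \ot \cK) = (L^2(Q_2) \ot \cH) \ot_{(Q_2 \ovt B)} \cK$, whereas you chain subbimodule embeddings into multiples of coarse $L^2$-bimodules using functoriality of Connes fusion; both routes are valid and rest on the same two pairs of coarseness assumptions. For finite generation the paper simply exhibits the explicit generating set $\cF_1 \ot_B \cF_2$ with $\cF_1$ consisting of right $B$-bounded vectors, while your multiplicity count additionally needs two standard facts you leave implicit: a finitely generated coarse bimodule embeds into a \emph{finite} multiple of the standard coarse bimodule (split off cyclic submodules inductively and use that the GNS space of a normal state embeds into $L^2$), and a closed subbimodule of $L^2(Q \ovt C\op)^{\oplus n}$ is generated by $n$ vectors. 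Both are easy, so this is a fine alternative; your part~2 matches the paper, which indeed treats it as immediate from the definition.

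The one step that does not work as written is your treatment of faithfulness: restricting a faithful normal representation to an invariant subspace does not preserve faithfulness, so neither the left $A$-faithfulness of $\cH \ot_B \cK$ nor the faithfulness of the $Q_1 \ovt Q_2$-representation can be ``inherited via the constructed embedding''. For the left $A$-action the correct argument uses the hypothesis that the left $B$-action $\pi_\cK$ on $\cK$ is faithful: if $a \in A^+$ and $a \ot_B 1 = 0$, then $\pi_\cK(L_\xi^* \, a \, L_\xi) = 0$ for every right $B$-bounded vector $\xi \in \cH$, so $L_\xi^* \, a \, L_\xi = 0$ and $a^{1/2} L_\xi = 0$; since bounded vectors are dense, $a = 0$ in $B(\cH)$ and hence in $A$. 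Faithfulness of the $Q_1 \ovt Q_2$-representation is genuinely more delicate and can in fact fail when the right $B$-action on $\cH$ is not faithful, since the action of $1 \ot Q_2$ factors through $\pi_\cK(z)\cK$ with $z \in \cZ(B)$ the central support of the right $B$-action on $\cH$: for instance $A = Q_1 = C = \C$, $B = Q_2 = \cK = \C^2$ and $\cH = \C$ with $B$ acting through the first coordinate satisfy all the hypotheses, yet $(Q_1 \ot_B Q_2)\dpr = \C \neq \C^2$. The paper's proof glosses over this point as well; what your argument (and the paper's) really yields, and all that transitivity requires, is that $Q = (Q_1 \ot_B Q_2)\dpr$ is the image of $Q_1 \ovt Q_2$ under a normal representation, hence a finite von Neumann algebra, with the stated isomorphism holding e.g.\ when the right $B$-action on $\cH$ is faithful.
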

\begin{proof}
1.\ Write $P_i = Q_i\op$. Since we can view $\cH$ as a Hilbert $A$-$(P_1 \ovt B)$-bimodule and $\cK$ as a $B$-$(P_2 \ovt C)$-bimodule, the identification
$$\cH \ot_B \cK = \cH \ot_{(P_1 \ovt B)} (L^2(P_1) \ot \cK)$$
shows that the natural right action of $P_1 \otalg P_2 \otalg C$ is a normal representation of $P_1 \ovt P_2 \ovt C$. Similarly identifying
$$\cH \ot_B \cK = (L^2(Q_2) \ot \cH) \ot_{(Q_2 \ovt B)} \cK$$
shows that the natural left action of $Q_2 \otalg Q_1 \otalg A$ is a normal representation of $Q_2 \ovt Q_1 \ovt A$.

Take finite sets $\cF_1 \subset \cH$ and $\cF_2 \subset \cK$ such that $\cF_1 \cdot (P_1 \otalg B)$ and $\cF_2 \cdot (P_2 \otalg C)$ have a dense linear span in $\cH$, resp.\ $\cK$. We may assume that $\cF_1$ consists of left $B$-bounded vectors. It is then immediate that $\cF = \cF_1 \ot_B \cF_2$ generates $\cH \ot_B \cK$ as right Hilbert $P_1 \ovt P_2 \ovt C$-module.

2.\ This follows directly from the definition.
\end{proof}

For later usage, we also record the following obvious reformulation of the definition of a measure equivalence (embedding).

\begin{lemma}\label{lem.charac}
Let $A$ and $B$ be finite von Neumann algebras and $\bim{A}{\cH}{B}$ a Hilbert $A$-$B$-bimodule.

Then $(\bim{A}{\cH}{B},Q)$ is a measure equivalence embedding of $A$ into $B$ if and only if for $n \in \N$ large enough and $P = M_n(\C) \ot Q\op$, the bimodule $\bim{A}{\cH^{\oplus n}}{B}$ is isomorphic with $\bim{\al(A)}{p L^2(P \ovt B)}{1 \ot B}$ with the natural right $P$-action, where $p \in P \ovt B$ is a projection and $\al : A \to p(P \ovt B)p$ is a faithful unital normal $*$-homomorphism such that $\bim{\al(A)}{p L^2(P \ovt B)}{P \ot 1}$ is coarse.

We have that $(\bim{A}{\cH}{B},Q)$ is a measure equivalence of $A$ and $B$ if and only if for $n \in \N$ large enough and $P = M_n(\C) \ot Q\op$, the bimodule $\bim{A}{\cH^{\oplus n}}{B}$ is isomorphic with $\bim{\al(A)}{p L^2(P \ovt B)}{1 \ot B}$ with the natural right $P$-action and also isomorphic with $\bim{1 \ot A}{L^2(Q \ovt B) q}{\be(B)}$ with the natural left $Q$-action, where $p \in P \ovt B$ and $q \in Q \ovt A$ are projections and $\al : A \to p(P \ovt B)p$ and $\be : B \to q(Q \ovt A)q$ are faithful unital normal $*$-homomorphisms.
\end{lemma}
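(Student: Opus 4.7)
The plan is to unpack Definition \ref{def.ME-II1} using the Murray--von Neumann structure theorem for finitely generated right Hilbert modules over a finite von Neumann algebra, and then verify the two directions of the equivalence.

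For the forward direction, assume $(\bim{A}{\cH}{B}, Q)$ is a measure equivalence embedding, and set $N = Q\op \ovt B$. Coarseness of $\bim{Q}{\cH}{B}$ together with finite generation as a $Q$-$B$-bimodule means that $\cH$ is a finitely generated right Hilbert $N$-module. By the structure theorem, there exist $n \in \N$ and a projection $p \in M_n(N) = P \ovt B$ (with $P = M_n(\C) \ot Q\op$) such that $\cH^{\oplus n} \cong p L^2(P \ovt B)$ as right $(P \ovt B)$-modules; under the identification $L^2(N)^{\oplus n} \ot \C^n \cong L^2(P \ovt B)$ via $\xi \ot e_i \ot e_j \mapsto \xi \ot E_{ij}$, the natural right $(P \ovt B)$-action on $\cH^{\oplus n}$ is the combination of the given right $N$-action on each copy with the right $M_n(\C)$-action permuting the $n$ copies, which gives in particular the desired right $P$-action. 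The left $A$-action on $\cH^{\oplus n}$ is diagonal, so it commutes with the $M_n(\C)$-amplification; combined with the commutation with the right $N$-action (built into $Q \subset \End_{A-B}(\cH)$), it commutes with the full right $(P \ovt B)$-action. Hence $A$ maps into the commutant $p (P \ovt B) p$ acting by left multiplication, giving a unital normal $*$-homomorphism $\al : A \to p(P \ovt B) p$, which is faithful by the faithfulness of the left $A$-action. Finally, the coarseness of $\bim{\al(A)}{pL^2(P \ovt B)}{P \ot 1}$ is automatic: left and right multiplication by $P \ovt B$ on $L^2(P \ovt B)$ commute and jointly give a normal representation of $(P \ovt B) \ovt (P \ovt B)\op$, which restricts to a normal $\al(A) \ovt P\op$-action on the invariant subspace $pL^2(P \ovt B)$.

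For the backward direction, assume the realization $\cH^{\oplus n} \cong p L^2(P \ovt B)$ with $\al$ as in the statement. Right multiplication on $pL^2(P \ovt B)$ by $1 \ot Q\op \ot 1 \subset P \ovt B$ commutes with right multiplication by $M_n(\C) \ot 1 \ot 1$, so it acts diagonally with respect to the direct sum decomposition $\cH^{\oplus n}$ and descends to an action of $Q$ on $\cH$ commuting with left $A$ and right $B$, giving $Q \subset \End_{A-B}(\cH)$. Coarseness of $\bim{A}{\cH}{Q\op}$ and $\bim{Q}{\cH}{B}$ follows from the same coarse bimodule structure of $L^2(P \ovt B)$ used in the forward direction, after restriction to the relevant subalgebras. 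Finite generation of $\cH$ as a $Q$-$B$-bimodule follows because $p L^2(P \ovt B)$ is cyclic as a right $(P \ovt B)$-module and hence finitely generated as a right $N$-module by the $n^2$ elements $p (E_{ij} \ot 1)$ where $E_{ij}$ are the matrix units of $M_n(\C)$; therefore $\cH$, being a direct summand of $\cH^{\oplus n}$, is also finitely generated as a $Q$-$B$-bimodule. Faithfulness of the left $A$-action on $\cH$ is immediate from faithfulness of $\al$.

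For the measure equivalence characterization, I would apply the equivalence for ME embeddings in both directions, using Lemma \ref{lem.transitive}.2: $(\bim{A}{\cH}{B}, Q)$ is a measure equivalence iff both it and its conjugate $(\bim{B}{\overline{\cH}}{A}, Q\op)$ are ME embeddings. Applying the forward direction to the conjugate bimodule and then taking conjugates back (using that the conjugate of $pL^2(M)$ is naturally isomorphic to $L^2(M) p^*$ with swapped left-right structure) yields the second isomorphism $\cH^{\oplus n} \cong L^2(Q \ovt A) q$ of the statement, with $\be : B \to q (Q \ovt A) q$ faithful unital normal and the natural left $Q$-action in place of the right $P$-action, after possibly enlarging $n$ so that a single amplification serves both directions. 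The main technical point throughout is the careful bookkeeping of the identification $\cH^{\oplus n} \cong pL^2(P \ovt B)$: verifying that the direct-sum $M_n(\C)$-action combines with the right $N$-action into the canonical right $(P \ovt B)$-action, and in the backward direction extracting $Q$ from the $1 \ot Q\op \ot 1$ corner. Everything else is a routine unwinding of definitions.
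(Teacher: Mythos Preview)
Your argument is correct and is exactly the unpacking the paper has in mind: the paper's own proof is the single sentence ``This is just a translation of the definition,'' and what you wrote is that translation carried out in detail via the structure theorem for finitely generated Hilbert modules. Your side observation that coarseness of $\bim{\al(A)}{pL^2(P\ovt B)}{P\ot 1}$ is automatic once $\al$ lands in $p(P\ovt B)p$ is correct and in fact shows that the hypothesis ``$\bim{A}{\cH}{Q\op}$ coarse'' in Definition~\ref{def.ME-II1} is implied by the other conditions.
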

\begin{proof}
This is just a translation of the definition.
\end{proof}

The following construction provides a rich source of measure equivalence embeddings of group von Neumann algebras. We will use this in the proof of Proposition \ref{prop.ME-groups} to show that when a countable group $\Gamma$ admits a measure equivalence embedding into countable group $\Lambda$, then the same holds for the finite von Neumann algebras $L(\Gamma)$ and $L(\Lambda)$. We will also use this in the proof of Theorem \ref{thm.free-group-factor-ME} to show that $L(\F_2)$ admits a measure equivalence embedding into any nonamenable II$_1$ factor.

\begin{proposition}\label{prop.source-ME}
Let $\Gamma$ be a countable group and $\Gamma \actson (X,\mu)$ a pmp action. Write $P = L^\infty(X) \rtimes \Gamma$. Let $(B,\tau)$ be a tracial von Neumann algebra and $\om : \Gamma \times X \to \cU(B)$ a $1$-cocycle.

There is a unique normal $*$-homomorphism $\al : L(\Gamma) \to P \ovt B : \al(u_g) = (u_g \ot 1) \, (x \mapsto \om(g,x))$ for all $g \in \Gamma$, where we view $x \mapsto \om(g,x)$ as a unitary element in $L^\infty(X) \ovt B \subset P \ovt B$.

If for a.e.\ $x \in X$, we have that $\sum_{g \in \Gamma} |\tau(\om(g,x))|^2 < +\infty$, then $\cH = \bim{\al(L(\Gamma))}{L^2(P \ovt B)}{1 \ot B}$ together with the natural right $P$-action is a measure equivalence embedding of $L(\Gamma)$ into $B$.

If there moreover exists a finite subset $\cF \subset L^2(X,L^2(B))$ such that for a.e.\ $x \in X$, the set
\begin{equation}\label{eq.condition-ME}
\bigl\{ \om(g,g^{-1} \cdot x) \, \vphi(g^{-1} \cdot x) \bigm| g \in \Gamma, \vphi \in \cF \bigr\} \quad\text{is total in $L^2(B)$,}
\end{equation}
then we have a measure equivalence between $L(\Gamma)$ and $B$.
\end{proposition}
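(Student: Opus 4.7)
First, I would construct $\alpha$. Define the unitaries $V_g := (x \mapsto \omega(g,x)) \in \cU(L^\infty(X) \ovt B)$ and set $\alpha(u_g) := (u_g \ot 1) V_g \in \cU(P \ovt B)$. A direct computation using the relation $(u_g \ot 1) V_h = \sigma_g(V_h) (u_g \ot 1)$ together with the cocycle identity $\omega(gh,x) = \omega(g,hx)\omega(h,x)$ shows that $\alpha(u_g)\alpha(u_h) = \alpha(u_{gh})$, so $\alpha$ defines a group homomorphism $\Gamma \to \cU(P \ovt B)$. Since $(\tau_P \ot \tau_B)(\alpha(u_g)) = \tau_P(u_g) \tau_B(\omega(g,\cdot)) = \delta_{g,e}$, the extension $\C[\Gamma] \to P \ovt B$ is trace-preserving, and therefore extends uniquely to a normal $*$-homomorphism $\alpha : L(\Gamma) \to P \ovt B$.

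For the ME embedding, I take $Q\op := P$ acting on the right by multiplication with $P \ot 1$, viewing $Q = (P \ot 1)\op \subset \End_{A-B}(\cK)$. Three of the four conditions of Definition \ref{def.ME-II1} are easy. Faithfulness of $\alpha$ is immediate from trace-preservation. The bimodule $\bim{Q}{\cK}{B}$ is coarse because the commuting right multiplications by $P \ot 1$ and $1 \ot B$ together assemble into the right regular representation of $P \ovt B$, extending to a normal representation of $P\op \ovt B\op = (P \ovt B)\op$. Since $1 \in L^2(P \ovt B)$ is cyclic for right multiplication by $P \ot_\text{alg} B$, the bimodule $\cK$ is singly generated, hence finitely generated, as a $Q$-$B$-bimodule.

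The main technical step is the coarseness of $\bim{A}{\cK}{Q\op} = \bim{L(\Gamma)}{L^2(P \ovt B)}{P}$, which is where the $L^2$-summability hypothesis enters. I would use the crossed product decomposition $P \ovt B = (L^\infty(X) \ovt B) \rtimes_\beta \Gamma$, where $\beta_g := \Ad \alpha(u_g)$ restricts to an automorphism of $L^\infty(X) \ovt B$ (indeed, $\beta_g(F) = \sigma_g(V_g F V_g^*)$). Under the induced identification $L^2(P \ovt B) \cong \ell^2(\Gamma) \ot L^2(L^\infty(X) \ovt B)$, the left $\alpha$-action becomes $\lambda_\Gamma \ot 1$, while the right action of $u_g \ot 1 = \alpha(u_g) V_g^{-1}$ takes the form $\rho_\Gamma(g^{-1}) \ot \tilde T_g$ with $\tilde T_g(F) = \beta_{g^{-1}}(F) V_g^{-1}$ defining an anti-representation of $\Gamma$. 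Fell's absorption principle then provides a unitary that ``straightens'' the right action to $\rho_\Gamma(g^{-1}) \ot 1$ (at the cost of twisting the left action by $\lambda_\Gamma(h) \ot \tilde T_h^{-1}$), and the pointwise $L^2$-summability $\sum_g |\tau_B(\omega(g,x))|^2 < +\infty$ controls the matrix coefficients at the cyclic vector $1$: indeed, one computes $\langle \alpha(u_g) (u_{g^{-1}} \ot 1) \cdot 1, 1\rangle = \int_X \overline{\tau_B(\omega(g^{-1},x))}\, d\mu(x)$, which by the hypothesis provides the summability required to show that the commuting normal representations $\pi_1$ and $\pi_2$ extend to a normal representation of the spatial tensor product $L(\Gamma) \ovt P\op$ on $\cK$.

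For the measure equivalence conclusion under the additional hypothesis \eqref{eq.condition-ME}, I would verify that $\cK$ is finitely generated as an $A$-$Q\op$-bimodule (faithfulness of the right $B$-action being immediate from the standard representation of $P \ovt B$). For each $\vphi \in \cF \subset L^2(X, L^2(B))$, view $\vphi$ as a vector in the $e$-fiber of the decomposition $L^2(P \ovt B) = \bigoplus_k u_k L^2(X,L^2(B))$. A direct computation shows that $\alpha(u_g) \cdot \vphi \cdot (u_h f \ot 1)$ lies in the $u_{gh}$-fiber with content $x \mapsto f(x)\, \omega(g,hx)\, \vphi(hx)$. Setting $k = gh$ and using the cocycle to rewrite $\omega(kh^{-1}, hx) = \omega(k,x) \omega(h,x)^{-1}$, the span of such contents in each fixed fiber equals $\omega(k,x)$ times the linear span of $\{x \mapsto \omega(h,x)^{-1} \vphi(hx) f(x)\}_{h,\vphi,f}$. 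After the substitution $y = hx$ and using the identity $\omega(h,x)^{-1} = \omega(h^{-1}, hx)$, this becomes $\{y \mapsto \omega(h^{-1}, y) \vphi(y) f(h^{-1}y)\}$, and hypothesis \eqref{eq.condition-ME} together with the freedom in $f \in L^\infty(X)$ yields density in $L^2(X, L^2(B))$. The most delicate point of the whole proof is the coarseness established in the previous paragraph; the remaining steps are essentially formal once the ``right'' bimodule-theoretic framework and Fell's absorption trick have been set up.
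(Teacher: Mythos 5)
Your construction of $\al$, your verification of the easy conditions of Definition \ref{def.ME-II1} (faithfulness of $\al$, coarseness of $\bim{Q}{\cK}{B}$ via the right regular representation, single generation over $Q$-$B$), and your generation computation for the measure equivalence part all match the paper in substance. But the central coarseness step for $\bim{L(\Gamma)}{L^2(P \ovt B)}{P \ot 1}$ — which you correctly identify as the only genuinely analytic step — has a real gap, in fact two. First, the vector $1 \in L^2(P \ovt B)$ is cyclic for the right $(P \ovt B)$-action, but it does \emph{not} in general generate $L^2(P \ovt B)$ as an $L(\Gamma)$-$P$-bimodule: the closed span of $\al(L(\Gamma)) \cdot 1 \cdot (P \ot 1)$ only reaches the part of $L^2(B)$ touched by the cocycle (e.g.\ if $\om$ takes values in a proper subalgebra $B_1 \ot 1 \subset B$, this span sits inside $L^2(P \ovt B_1) \ot \C 1$). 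So even a complete control of the coefficient functional at $1$ cannot yield coarseness; one must handle a genuinely generating family of vectors. The paper uses $1_W \ot v$ with $v \in \cU(B)$, exploiting $\tau(v^* \om(g,x) v) = \tau(\om(g,x))$ so that the $v$'s cost nothing in the estimates but restore generation. Second, and more seriously, your summability claim does not follow from the hypothesis: the assumption is only the \emph{pointwise} a.e.\ finiteness of $\sum_{g} |\tau(\om(g,x))|^2$, not its integrability over $X$. The natural estimate $\sum_g \bigl| \int_X \tau(\om(g,x)) \, d\mu(x) \bigr|^2 \leq \int_X \sum_g |\tau(\om(g,x))|^2 \, d\mu(x)$ has a possibly infinite right-hand side, so your matrix coefficients at $1$ need not be square-summable over $\Gamma$. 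This is exactly why the paper performs an Egorov-type truncation: it introduces the class $\cC$ of Borel sets $W \subset X$ with $\int_W \sum_g |\tau(\om(g,x))|^2 \, d\mu(x) < +\infty$, notes that such $W$ exhaust $X$ up to arbitrarily small measure (so the vectors $1_W \ot v$ generate), and then implements each coefficient functional by the explicit vector $\zeta = \sum_g u_g \ot 1_{W \cap g^{-1} \cdot W} \, \Om_g^* \, u_g^*$, which is square-summable precisely because $W \in \cC$. Without this truncation your argument cannot close, and with it the Fell-absorption apparatus becomes unnecessary.

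Two further, smaller points. Your final appeal — that the two commuting normal representations ``extend to a normal representation of the spatial tensor product'' once certain coefficients are summable — is precisely the statement to be proven, not a formal consequence; your crossed-product decomposition $P \ovt B = (L^\infty(X) \ovt B) \rtimes_\be \Gamma$ and the straightening of the right action (one checks $\widetilde{T}_g \widetilde{T}_h = \widetilde{T}_{hg}$ using $V_{hg} = \si_{g^{-1}}(V_h) V_g$) are correct bookkeeping but do not reduce this analytic content. In the measure equivalence part, your closing assertion that \eqref{eq.condition-ME} ``together with the freedom in $f \in L^\infty(X)$ yields density'' is where the paper inserts its fiberwise argument: the projection onto the candidate subspace is decomposable over $L^\infty(X)$, $p = (x \mapsto p_x)$, and \eqref{eq.condition-ME} forces $p_x = 1$ for a.e.\ $x$. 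That step of yours is easily repaired; the coarseness step is not repairable without the truncation by the sets $W \in \cC$.
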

\begin{proof}
Since $\al$ is trace preserving, it is a well defined normal $*$-homomorphism. Assuming that $\sum_{g \in \Gamma} |\tau(\om(g,x))|^2 < +\infty$ for a.e.\ $x \in X$, we prove that the bimodule $\bim{\al(L(\Gamma))}{L^2(P \ovt B)}{P \ot 1}$ is coarse.

Denote by $\cC$ the set of Borel subsets $W \subset X$ with the property that
$$\int_W \sum_{g \in \Gamma} |\tau(\om(g,x))|^2 \; d\mu(x) < + \infty \; .$$
By assumption, $\cC$ contains Borel sets $W$ with $\mu(X \setminus W)$ being arbitrarily small. Therefore, the vectors $1_W \ot v$ with $W \in \cC$ and $v \in \cU(B)$ generate $L^2(P \ovt B)$ as an $L(\Gamma)$-$P$-bimodule.

It thus suffices to fix $W \in \cC$ and $v \in \cU(B)$ and prove that
$$\psi : L(\Gamma) \otalg P\op \to \C : \psi(a \ot d\op) = \langle \al(a) \cdot (1_W \ot v) \cdot (d \ot 1) , 1_W \ot v \rangle$$
extends to a normal functional on $L(\Gamma) \ovt P\op$. By construction, for all $g,h \in \Gamma$ and $F \in L^\infty(X)$, we have
\begin{align*}
\psi(u_g \ot (Fu_h)\op) &= \langle (u_g \ot 1) \, (W \ni x \mapsto F(x) \, \om(g,x) \, v) \, (u_h \ot 1) , 1_W \ot v \rangle \\
&= \delta_{g,h^{-1}} \, \int_{W \cap h \cdot W} F(x) \, \tau(\om(g,x)) \, d\mu(x) = \langle (u_g \ot 1) \, (1 \ot 1) \, (1 \ot F u_h) , \zeta \rangle \; ,
\end{align*}
where $\zeta \in L^2(L(\Gamma) \ovt P)$ is defined by
$$\zeta = \sum_{g \in \Gamma} u_g \ot 1_{W \cap g^{-1} \cdot W} \, \Om_g^* \, u_g^* \quad\text{with $\Om_g \in L^\infty(X)$ given by $\Om_g(x) = \tau(\om(g,x))$.}$$
Note that $\zeta$ is well defined because
$$\|\zeta\|_2^2 \leq \sum_{g \in \Gamma} \int_W |\Om_g|^2 \, d\mu < +\infty \quad\text{because $W \in \cC$.}$$
We have thus proven that $\psi(a \ot d\op) = \langle (a \ot 1) (1 \ot 1) (1 \ot d), \zeta \rangle$, so that $\psi$ indeed extends to a normal functional on $L(\Gamma) \ovt P\op$. This means that $\cH$ defines a measure equivalence embedding of $L(\Gamma)$ into $B$.

Now assume that we moreover have a finite subset $\cF \subset L^2(X,L^2(B))$ satisfying \eqref{eq.condition-ME}. Viewing $\cF \subset L^2(X,L^2(B)) \subset L^2(P \ovt B)$, it remains to prove that $\cF$ generates $\cH$ as a Hilbert $L(\Gamma)$-$P$-bimodule. Denote by $\cK$ the closed linear span of $\al(L(\Gamma)) \cdot \cF \cdot (P \ot 1)$. Since
$$\al(u_g) \cdot \vphi \cdot (u_g^* \ot 1) = \bigl( x \mapsto \om(g,g^{-1} \cdot x) \vphi(g^{-1} \cdot x) \bigr) \; ,$$
we conclude that $\cK_0 \subset \cK$, where $\cK_0 \subset L^2(X) \ovt L^2(B)$ is defined as the closed linear span of
$$\{ x \mapsto \om(g,g^{-1} \cdot x) \vphi(g^{-1} \cdot x) F(x) \mid g \in \Gamma , \vphi \in \cF , F \in L^\infty(X) \} \; .$$
It suffices to prove that $\cK_0 = L^2(X) \ovt L^2(B)$. Denote by $p$ the orthogonal projection of $L^2(X) \ovt L^2(B)$ onto $\cK_0$. Since $\cK_0$ is an $L^\infty(X)$-module, we have that $p = (x \mapsto p_x)$, where $p_x \in B(L^2(B))$ are orthogonal projections. Write $q_x = 1-p_x$. Then for all $g \in \Gamma$, $\vphi \in \cF$ and a.e.\ $x \in X$, we conclude that
$$q_x \bigl( \om(g,g^{-1} \cdot x) \vphi(g^{-1} \cdot x) \bigr) = 0 \; .$$
By our assumption, $q_x = 0$ for a.e.\ $x \in X$. So, $p=1$ and $\cK_0 = L^2(X) \ovt L^2(B)$.
\end{proof}

\subsection{Relation to measure equivalence of groups}

The following proposition provides further motivation for Definition \ref{def.ME-II1}. We prove that measure equivalence of discrete groups implies measure equivalence of the associated group von Neumann algebras. We do not expect that the converse implication holds, but this remains an open question. We actually expect that there should exist countable groups $\Gamma$ and $\Lambda$ with infinite conjugacy classes such that $L(\Gamma) \cong L(\Lambda)$ and such that $\Gamma$ and $\Lambda$ are not measure equivalent.

\begin{proposition}\label{prop.ME-groups}
Let $\Gamma$ and $\Lambda$ be countable groups. If $\Gamma$ admits a measure equivalence embedding into $\Lambda$, then the same holds for $L(\Gamma)$ and $L(\Lambda)$. If $\Gamma$ and $\Lambda$ are measure equivalent, then the same holds for $L(\Gamma)$ and $L(\Lambda)$.
\end{proposition}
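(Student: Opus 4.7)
The plan is to derive both implications from Proposition \ref{prop.source-ME} applied with $B = L(\Lambda)$. First unfold the ME embedding (resp.\ measure equivalence) of the countable groups $\Gamma$ and $\Lambda$ into its standard measure-theoretic form: a $\sigma$-finite measure space $(\Om, m)$ carrying commuting free measure-preserving actions of $\Gamma$ and $\Lambda$, a $\Lambda$-fundamental domain $X_\Lambda$ of $m$-measure normalized to $1$, and, in the ME case, also a $\Gamma$-fundamental domain $X_\Gamma$ of finite $m$-measure. Identifying $\Om = \Lambda \times X_\Lambda$ via the free $\Lambda$-action, the $\Gamma$-action reads $g \cdot (\lambda, y) = (\lambda \beta(g, y),\, g \cdot y)$ for a cocycle $\beta : \Gamma \times X_\Lambda \to \Lambda$, and $\Gamma \actson (X_\Lambda, \mu)$ is pmp. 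I would set $\om(g, y) = u_{\beta(g, y)} \in \cU(L(\Lambda))$ and feed this into Proposition \ref{prop.source-ME}.

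For the ME embedding statement, only the summability hypothesis must be verified. Since $\tau(u_\lambda) = \delta_{\lambda, e}$, $\sum_g |\tau(u_{\beta(g, y)})|^2 = |\{g \in \Gamma : \beta(g, y) = e\}|$, which counts the returns of the $\Gamma$-orbit of $(e, y) \in \Om$ to the section $\{e\} \times X_\Lambda$. By dissipativity, $\Gamma$ admits a (possibly $\sigma$-finite) fundamental domain $X_\Gamma \subset \Om$; integrating the return count over $X_\Gamma$ and unfolding yields $m(\{e\} \times X_\Lambda) = 1 < \infty$, so the count is a.e.\ finite. Proposition \ref{prop.source-ME} then produces the ME embedding $L(\Gamma) \hookrightarrow L(\Lambda)$.

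For the measure equivalence statement the same bimodule is used, and the extra hypothesis $m(X_\Gamma) < \infty$ is leveraged to exhibit a finite set $\cF \subset L^2(X_\Lambda, L^2(L(\Lambda)))$ with the totality property of Proposition \ref{prop.source-ME}. The proposal is to choose a $\Gamma$-fundamental domain $X_\Gamma$ concentrated on finitely many $\Lambda$-levels $\lambda_1, \ldots, \lambda_n$, decompose $X_\Gamma = \bigsqcup_{i=1}^{n} \{\lambda_i\} \times E_{\lambda_i}$, and set $\cF = \{\vphi_i\}$ with $\vphi_i(y) = \mathbf{1}_{E_{\lambda_i}}(y)\, u_{\lambda_i^{-1}}$. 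Totality then reduces to the observation that, as $(g, i)$ ranges over pairs with $(\lambda_i, g^{-1} y) \in X_\Gamma$, the $\Gamma$-translate $g \cdot (\lambda_i, g^{-1} y) = (\lambda_i \beta(g, g^{-1} y), y)$ sweeps $\Lambda \times \{y\}$ exactly once---a direct consequence of the $\Gamma$-fundamental-domain property.

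The delicate step, and the main obstacle, is the assertion that $X_\Gamma$ can be chosen supported on finitely many $\Lambda$-levels: a priori it may split across countably many non-null slices $E_\lambda$. Consolidation to finite support relies on $m(X_\Gamma) < \infty$ together with the pmp ergodicity of the natural action of $\Lambda$ on $X_\Gamma$ (covering $X_\Gamma$ by finitely many $\Lambda$-translates of the projection of $X_\Lambda$); this is where the measure equivalence hypothesis is essential, as opposed to a mere embedding.
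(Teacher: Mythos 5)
Your first half (the ME embedding statement) is correct and is essentially the paper's argument: both reduce, via Proposition \ref{prop.source-ME} with $\om(g,y)=u_{\beta(g,y)}$ and $\tau(u_\lambda)=\delta_{\lambda,e}$, to the a.e.\ finiteness of $\#\{g\in\Gamma : \beta(g,y)=e\}$, and your identity $\int_{X_\Gamma} N\, dm = m(\{e\}\times X_\Lambda)=1$ for the $\Gamma$-invariant return-count $N$ is a clean equivalent of the paper's Borel--Cantelli computation over the translates $h\cdot Y$. One convention slip to fix: with $g\cdot(\lambda,y)=(\lambda\beta(g,y),g\cdot y)$ your $\beta$ satisfies $\beta(gh,y)=\beta(h,y)\,\beta(g,h\cdot y)$, an anti-cocycle, so $u_{\beta(g,y)}$ is not a $1$-cocycle as required by Proposition \ref{prop.source-ME}; you should put the cocycle on the left, $g\cdot(\lambda,y)=(\beta(g,y)\lambda, g\cdot y)$, and accordingly take $\vphi_i(y)=\mathbf{1}_{E_{\lambda_i}}(y)\,u_{\lambda_i}$, so that the produced unitaries $u_{\beta(g,g^{-1}\cdot y)\lambda_i}$ match the fundamental-domain sweep; with your stated conventions the sweep gives $\{\lambda_i\beta(g,g^{-1}\cdot y)\}$ while your $\vphi_i$ produce $u_{\beta(g,g^{-1}\cdot y)\lambda_i^{-1}}$, which do not coincide.

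The second half has a genuine gap, exactly at the step you flagged. If a $\Gamma$-fundamental domain $X_\Gamma\subset\{\lambda_1,\ldots,\lambda_n\}\times X_\Lambda$ exists, then projecting to $V=\Gamma\backslash\Om$ gives $V=\bigcup_{i=1}^n W\cdot\lambda_i$ up to null sets, where $W=\pi(\{e\}\times X_\Lambda)$. Your proposed justification is that ergodicity of the finite measure preserving action $\Lambda\actson V$ yields such a finite cover by translates of the positive-measure set $W$. This implication is false in general: for the Bernoulli shift $\Z\actson(\{0,1\}^\Z,\mathrm{Ber}(1/2))$ and $W=\{x : x_0=1\}$, the translates of $W$ cover a.e.\ (the action is even mixing), yet the complement of any finite union $\bigcup_{n\in F}\si^n W$ has measure $2^{-|F|}>0$. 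Moreover, demanding $X_\Gamma\subset F\times X_\Lambda$ with $F$ finite is a boundedness condition on the coupling, in the spirit of bounded or uniform measure equivalence, and nothing in the ME hypothesis forces it.

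The paper avoids this entirely by requiring less: not finitely many \emph{constant} levels, but finitely many \emph{Borel maps} $\vphi : X\to\Lambda$ with $\{\Gamma\cdot x\cdot\vphi(x)\mid x\in X,\,\vphi\in\cF\}=Z$ up to measure zero; a single such map may use infinitely many levels, and Proposition \ref{prop.source-ME} (condition \eqref{eq.condition-ME}) is formulated precisely to accommodate this. Such a finite $\cF$ does follow from ergodicity plus finite measure: any measurable lift of $W$ into $X\times\{e\}$ is automatically injective in the $X$-coordinate (distinct $\Gamma$-orbits cannot share a point of $X\times\{e\}$), so one partial map covers $W$; then one partitions $V\setminus W$ into finitely many pieces $A_i$ of measure at most $\rho(W)$ (possible since $\rho(V)<\infty$) and uses ergodicity to move each $A_i$ injectively into $W$ along $\Lambda$-orbits by a Borel map $c_i : A_i\to\Lambda$, which yields one map $\vphi_i$ covering $A_i$ by the same injectivity argument. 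If you replace your finite-level consolidation by this orbit-injection construction (and make the convention fix above), your totality verification goes through and the proof is complete.
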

\begin{proof}
First assume that $\Gamma$ admits a measure equivalence embedding into $\Lambda$. We can then take a $\si$-finite standard measure space $(Z,\mu)$ with free measure preserving commuting actions of $\Gamma$ (on the left) and of $\Lambda$ (on the right) that both admit a fundamental domain, say $Y \subset Z$ for the $\Gamma$-action and $X \subset Z$ for the $\Lambda$-action, such that $\mu(X) < +\infty$.

Note that we get a unique measure preserving action $\Gamma \actson (X,\mu)$ denoted by $g \ast x$ and $1$-cocycle $\om : \Gamma \times X \to \Lambda$ such that
$$g \cdot x = (g \ast x) \cdot \om(g,x) \quad\text{for all $g \in \Gamma$, $x \in X$.}$$
Note that $\om(g,x) = e$ iff $g \cdot x \in X$.

We view $\om$ as a $1$-cocycle taking values in the unitary group of $L(\Lambda)$, equipped with its canonical trace $\tau$. Then, $\tau(\om(g,x))$ only takes the values $0$ or $1$, depending on whether $\om(g,x) = e$ or $\om(g,x) \neq e$. By Proposition \ref{prop.source-ME}, it thus suffices to prove that for a.e.\ $x \in Z$,
$$\# \{g \in \Gamma \mid g \cdot x \in X \} <+\infty \; .$$
For every $g,h \in \Gamma$, we have that $h \cdot Y \cap g^{-1} \cdot X = g^{-1} \cdot (X \cap (gh) \cdot Y)$. So for every $h \in \Gamma$, we have that
$$\sum_{g \in \Gamma} \mu(h \cdot Y \cap g^{-1} \cdot X) = \sum_{g \in \Gamma} \mu(X \cap (gh) \cdot Y) = \mu(X) < +\infty \; .$$
By the Borel-Cantelli lemma, for every $h \in \Gamma$, the set $\{g \in \Gamma \mid g \cdot x \in X \}$ is finite for a.e.\ $x \in h \cdot Y$. Since this holds for all $h \in \Gamma$, we have proven that $\{g \in \Gamma \mid g \cdot x \in X \}$ is finite for a.e.\ $x \in Z$.

If $\Gamma$ and $\Lambda$ are measure equivalent, we may assume above that also $\mu(Y) < +\infty$ and that the action $\Gamma \times \Lambda \actson Z$ is ergodic. Since the action $\Lambda \actson \Gamma \backslash Z$ is ergodic and finite measure preserving, we can choose a finite set $\cF$ of Borel maps $\vphi : X \to \Lambda$ such that
\begin{equation}\label{eq.we-have-this-now}
\{ \Gamma \cdot x \cdot \vphi(x) \mid x \in X, \vphi \in \cF\} = Z \quad\text{up to measure zero.}
\end{equation}
By Proposition \ref{prop.source-ME}, it suffices to prove that for a.e.\ $x \in X$,
\begin{equation}\label{eq.nice-goal}
\Lambda = \{ \om(g,g^{-1} \ast x) \vphi(g^{-1} \ast x) \mid g \in \Gamma\} \; .
\end{equation}
But for all $\lambda \in \Lambda$ and a.e.\ $x \in X$, we find by \eqref{eq.we-have-this-now} an element $x' \in X$ and $\vphi \in \cF$ such that
$$x \cdot \lambda = g \cdot x' \cdot \vphi(x') \; .$$
This precisely means that $x' = g^{-1} \ast x$ and $\om(g,x') = \lambda \vphi(x')^{-1}$. Thus, $\om(g,g^{-1} \ast x) \vphi(g^{-1} \ast x) = \lambda$ and \eqref{eq.nice-goal} is proven.
\end{proof}

\subsection{Amenability, embeddings of free group factors, Haagerup property}

Theorem \ref{thm.main-ME-embedding-II-1-factors} stated in the introduction follows from the following result and Proposition \ref{prop.inherited} below.

\begin{theorem}\label{thm.free-group-factor-ME}
The free group factor $L(\F_2)$ admits a measure equivalence embedding into any nonamenable II$_1$ factor.
\end{theorem}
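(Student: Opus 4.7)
The plan is to apply Proposition \ref{prop.source-ME} with $B = M$, $\Gamma = \F_2$, and a Bernoulli $1$-cocycle of the form \eqref{eq.my-1-cocycle} built from a carefully chosen symmetric probability measure $\nu$ on $\cU(M)$. The parallel with Theorem \ref{thm.main-ME-embedding} is direct: the role played there by a convolution operator on $L^2(G)$ of norm $< 1/3$ will be played here by a ``convolution operator'' on the coarse $M$-$M$-bimodule $L^2(M) \ot L^2(M)$.

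The first step is to reduce the summability hypothesis of Proposition \ref{prop.source-ME} to a single operator norm estimate. I would take $(X_0,\mu_0) = (Y \times Y,\eta \times \eta)$ with a Borel map $\psi : Y \to \cU(M)$ whose pushforward $\nu := \psi_* \eta$ is symmetric, and set $\vphi_a(y_1,y_2) = \psi(y_1)$, $\vphi_b(y_1,y_2) = \psi(y_2)$. For a reduced word $g = s_1 \cdots s_n \in \F_2$, the Bernoulli cocycle factors as $\om(g,x) = V_1 \cdots V_n$ with $V_1,\ldots,V_n$ independent unitaries of common distribution $\nu$ (using two independent Bernoulli coordinates so the factors for $a$ and $b$ do not collide). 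Using the unitary operators $T_u \in B(L^2(M) \ot L^2(M))$ defined by $T_u(\xi \ot \zeta) = u\xi \ot \zeta u^*$, one checks $|\tau(u)|^2 = \langle T_u \eta_0, \eta_0 \rangle$ with $\eta_0 := \hat{1} \ot \hat{1}$ and $T_{u_1 u_2} = T_{u_1} T_{u_2}$, so by independence
\[
\mathbb{E}\bigl[\,|\tau(\om(g,x))|^2\,\bigr] = \langle S^{|g|} \eta_0, \eta_0 \rangle \leq \|S\|^{|g|}, \qquad S := \int T_u \, d\nu(u).
\]
Since there are $4 \cdot 3^{n-1}$ reduced words of length $n$, Fubini and Proposition \ref{prop.source-ME} will apply as soon as $\|S\| < 1/3$.

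The second and crucial step is to produce such $\nu$ from nonamenability of $M$. Identifying $L^2(M) \ot L^2(M)$ with the standard module of $M \ovt M\op$, the map $u \mapsto T_u$ is the standard representation applied to $u \ot (u^*)\op$, so $\|S\| = \|\Theta(\nu)\|$, where $\Theta(\nu) := \int u \ot (u^*)\op \, d\nu(u) \in M \ovt M\op$. Nonamenability of $M$ is equivalent to the trivial bimodule $\bim{M}{L^2(M)}{M}$ not being weakly contained in $\bim{M}{L^2(M) \ot L^2(M)}{M}$; this provides finitely many unitaries $u_1,\ldots,u_m \in \cU(M)$ and $\delta > 0$ with $\sum_i \|u_i \zeta u_i^* - \zeta\|^2 \geq \delta \|\zeta\|^2$ for every $\zeta \in L^2(M) \ot L^2(M)$. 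Setting $F' := \{u_1, u_1^*, \ldots, u_m, u_m^*\}$ and $\nu_0 := \tfrac{1}{2} \delta_e + \tfrac{1}{2|F'|} \sum_{u \in F'} \delta_u$ makes $\Theta(\nu_0)$ a positive element of $M \ovt M\op$ (average of the identity with a self-adjoint contraction), and the identity $\Re \langle u \zeta u^*, \zeta \rangle = 1 - \tfrac{1}{2} \|u \zeta u^* - \zeta\|^2$ gives $\langle \Theta(\nu_0) \zeta, \zeta \rangle \leq 1 - \delta/(4|F'|)$ for unit $\zeta$, hence $\|\Theta(\nu_0)\| < 1$. Since $u \mapsto u \ot (u^*)\op$ is a group homomorphism, $\Theta(\nu_0^{\ast k}) = \Theta(\nu_0)^k$, so $\|\Theta(\nu_0^{\ast k})\| \leq \|\Theta(\nu_0)\|^k$, and $\nu := \nu_0^{\ast k}$ satisfies $\|\Theta(\nu)\| < 1/3$ for $k$ large.

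The main obstacle is this second step: translating the qualitative ``no weak containment'' characterisation of nonamenability into a quantitative bound $\|\Theta(\nu)\| < 1/3$. The positivity trick (adding $\tfrac{1}{2}\delta_e$ to the symmetric average in order to force $\Theta(\nu_0) \geq 0$) is what allows the ``real part'' estimate on $\langle \Theta(\nu_0) \zeta, \zeta \rangle$ to control the full operator norm, after which convolution powers gain any prescribed factor below $1/3$, mirroring the passage from $\|\lambda(\nu_0)\| < 1$ to $\|\lambda(\nu_0^{\ast k})\| < 1/3$ in Section \ref{sec.proof-A}. Once $\nu$ is chosen and $\psi$ is picked with $\psi_* \eta = \nu$, Proposition \ref{prop.source-ME} yields the desired ME embedding $\bim{L(\F_2)}{L^2(P \ovt M)}{M}$ with $P = L^\infty(X) \rtimes \F_2$; faithfulness of the left $L(\F_2)$-action is automatic because $L(\F_2)$ is a factor and the homomorphism $\al$ of Proposition \ref{prop.source-ME} is trace-preserving, hence injective.
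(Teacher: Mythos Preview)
Your proof is correct and follows essentially the same approach as the paper: both apply Proposition~\ref{prop.source-ME} to a Bernoulli $1$-cocycle with values in $\cU(M)$, reducing the summability condition to a norm bound $<1/3$ on an element $T \in M \ovt M\op$ of the form $\int u \ot \overline{u}\, d\nu(u)$ extracted from nonamenability via Connes' spectral gap. The only difference is cosmetic: the paper cites \cite[Remark 5.29]{Con76} directly and passes to a large power of $(T+T^*)/2$ to force $T=T^*$ with $\|T\|<1/3$, whereas you re-derive the gap from the bimodule formulation of nonamenability and take convolution powers of a symmetric measure (with an added $\tfrac{1}{2}\delta_e$ for positivity) to the same effect.
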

\begin{proof}
Let $M$ be a nonamenable II$_1$ factor. By \cite[Remark 5.29]{Con76}, we can choose $n \in \N$ and unitaries $u_1,\ldots,u_n \in \cU(M)$ such that
$$T := \frac{1}{n} \sum_{i=1}^n u_i \ot \overline{u_i} \in M \ovt M\op$$
satisfies $\|T\| < 1$. Taking a sufficiently large power of $(T+T^*)/2$, we may assume that $T = T^*$ and that $\|T\| < 1/3$.

Write $X_0 = \{1,\ldots,n\}$ and denote by $\mu_0$ the normalized counting measure on $X_0$. Write $\Gamma = \F_2$ with free generators $a,b \in \Gamma$ and consider the Bernoulli action $\Gamma \actson (X,\mu) = (X_0 \times X_0,\mu_0 \times \mu_0)^\Gamma$. Denote by $\om : \Gamma \times X \to \cU(M)$ the unique $1$-cocycle defined by
$$\om(a,x) = u_i \;\;\text{and}\;\;\om(b,x) = u_j \;\;\text{if}\;\; x_e = (i,j) \; .$$
By Proposition \ref{prop.source-ME}, it suffices to prove that
\begin{equation}\label{eq.we-need-this}
\sum_{g \in \Gamma} |\tau(\om(g,x))|^2 < + \infty \quad\text{for a.e.\ $x \in X$.}
\end{equation}
One computes that
$$\int_X (\om(g,x) \ot \overline{\om(g,x)}) \, d\mu(x) = T^{|g|} \quad\text{for all $g \in \Gamma$,}$$
where $g \mapsto |g|$ denotes the word length. Thus,
$$
\sum_{g \in \Gamma} \int_X |\tau(\om(g,x))|^2 \, d\mu(x) = \sum_{g \in \Gamma} (\tau \ot \tau)(T^{|g|})
\leq \sum_{g \in \Gamma} \|T\|^{|g|} < +\infty \; ,
$$
because $\|T\| < 1/3$ and because there are $4 \cdot 3^{n-1}$ elements of length $n$ in $\Gamma$. In particular, \eqref{eq.we-need-this} holds.
\end{proof}

\begin{proposition}\label{prop.inherited}
Let $A$ and $B$ be finite von Neumann algebras and assume that $A$ admits a measure equivalence embedding into $B$.
\begin{enumlist}
\item If $B$ is amenable, also $A$ is amenable.
\item If $B$ has the Haagerup approximation property, also $A$ has the Haagerup approximation property.
\end{enumlist}
\end{proposition}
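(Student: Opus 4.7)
Both statements are naturally proved by transferring the relevant property from $B$ to $A$ via the bimodule ``functor'' $\cK \otimes_B (-) \otimes_B \bar\cK$, which sends Hilbert $B$-$B$-bimodules to Hilbert $A$-$A$-bimodules. First I would reduce to the standard form supplied by Lemma \ref{lem.charac}: after replacing $\cK$ by $\cK^{\oplus n}$, there is a faithful unital normal trace-preserving $*$-homomorphism $\al : A \to p(P \ovt B) p$ with $P = M_n(\C) \ot Q\op$ and $\cK \cong p L^2(P\ovt B)$, such that the $A$-$P$-bimodule $\bim{\al(A)}{pL^2(P\ovt B)}{P \ot 1}$ is coarse. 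Denote by $\eta = \hat p \in \cK$ the natural ``trace vector'' of $\al$.

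\textbf{Key consequence of coarseness.} The coarseness of $\bim{A}{\cK}{Q\op}$ means that, as a left $A$-module, $\cK$ is a submodule of a multiple of $L^2(A)$; symmetrically for $\bar\cK$ as a right $A$-module. Consequently, the $A$-$A$-bimodule $\cK \otimes \bar\cK$, in which the left $A$ acts only on the first tensor factor and the right $A$ only on the second, embeds into a multiple of the coarse $A$-bimodule $L^2(A) \otimes L^2(A)$. Furthermore, for any $B$-$B$-bimodule $\cH$ that is weakly contained in the coarse $B$-bimodule $L^2(B) \otimes L^2(B)$, the relative tensor product $\cK \otimes_B \cH \otimes_B \bar\cK$ is weakly contained in $\cK \otimes_B (L^2(B) \otimes L^2(B)) \otimes_B \bar\cK$, which simplifies to $\cK \otimes \bar\cK$, hence is weakly contained in the coarse $A$-bimodule.

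\textbf{Amenability.} If $B$ is amenable, the identity $B$-bimodule $L^2(B)$ is weakly contained in the coarse $B$-bimodule, producing a net of unit vectors $\zeta_\alpha \in L^2(B) \otimes L^2(B)$ that are asymptotically $B$-central and asymptotically implement the trace. Form $\Xi_\alpha := \eta \otimes_B \zeta_\alpha \otimes_B \bar\eta \in \cK \otimes \bar\cK$. Using that $\al$ is trace-preserving, that $\eta$ is a trace vector for $\al$, and the $B$-$B$-bimodular properties of $\zeta_\alpha$, one verifies that $\Xi_\alpha$ is asymptotically $A$-central and asymptotically trace-implementing. Combined with the previous paragraph this shows $L^2(A) \prec \cK \otimes \bar\cK \prec $ coarse $A$-bimodule, hence $A$ is amenable.

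\textbf{Haagerup.} For each normal unital trace-preserving cp map $\phi : B \to B$, associate the pointed $B$-$B$-bimodule $(\cH_\phi, \xi_\phi)$ with $\phi(b) = \langle b \xi_\phi , \xi_\phi\rangle$; compactness of $\phi$ on $L^2(B)$ is equivalent to $\cH_\phi$ being weakly contained in the coarse $B$-bimodule. From a Haagerup approximation $\phi_n \to \id_B$, build the pointed $A$-$A$-bimodules $(\cM_n, \Xi_n)$ with $\cM_n := \cK \otimes_B \cH_{\phi_n} \otimes_B \bar\cK$ and $\Xi_n := \eta \otimes_B \xi_{\phi_n} \otimes_B \bar\eta$, and extract the corresponding cp maps $\psi_n : A \to A$ (with normalization involving $\tau_{P\ovt B}(p)$ so that $\phi=\id_B$ yields $\psi = \id_A$). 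The approximation $\phi_n \to \id_B$ forces $\psi_n \to \id_A$ pointwise in $\|\cdot\|_2$, while the key observation above forces $\cM_n$ to be weakly contained in the coarse $A$-bimodule, hence each $\psi_n$ is compact on $L^2(A)$. Thus $A$ inherits the Haagerup property.

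\textbf{Main obstacle.} The routine but delicate point is verifying that the transferred vectors $\Xi_\alpha$, respectively the transferred cp maps $\psi_n$, really satisfy the asymptotic invariance and the unitality/trace-preservation required for $A$. This hinges on (i) the trace-preserving character of $\al$, (ii) the finite generation of $\cK$ as a $Q$-$B$-bimodule (which is what ensures that the sandwich construction produces honest normal cp maps back into $A$, rather than merely into some amplification), and (iii) the correct bookkeeping of the $B$-valued inner products when computing in $\cK \otimes_B \cH \otimes_B \bar\cK$, including the normalization factor $\tau_{P\ovt B}(p)$.
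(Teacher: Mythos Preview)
Your argument for amenability is essentially the paper's: both reduce to the observation that $\bim{\al(A)}{\bigl(pL^2(P\ovt B)\ot_{P\ot 1}L^2(P\ovt B)p\bigr)}{\al(A)}$ is a coarse $A$-$A$-bimodule weakly containing the trivial one. (Your version needs a small fix: the $\zeta_\alpha$ are only approximately $B$-central, whereas $\al(a)$ lives in $p(P\ovt B)p$; you must tensor with $L^2(P)$ to get approximately $(P\ovt B)$-central vectors, which is exactly what the paper does.)

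For the Haagerup property, however, there is a genuine gap. Your key claim---that compactness of $\phi$ on $L^2(B)$ is \emph{equivalent} to $\cH_\phi$ being weakly contained in the coarse $B$-bimodule---is false in both directions you need. For the direction you actually use at the end: take $B$ amenable and $\phi=\id_B$; then $\cH_\phi=L^2(B)$ is weakly contained in the coarse bimodule by amenability, yet $\phi$ is certainly not compact. So knowing that your transferred bimodules $\cM_n$ are weakly contained in the coarse $A$-bimodule does \emph{not} let you conclude that the $\psi_n$ extend to compact operators on $L^2(A)$. Even replacing ``weakly contained in coarse'' by the stronger ``is a coarse bimodule'' does not give compactness; it only gives \emph{almost} compactness in the sense of the paper's Lemma~\ref{lem.coarse-almost-compact}.

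The paper's proof of part~2 avoids bimodule weak containment altogether. It works directly with the compact operators $T_n$ on $L^2(B)$ witnessing the Haagerup property and shows, using the coarseness of $\bim{\al(A)}{pL^2(P\ovt B)}{P\ot 1}$ together with Lemma~\ref{lem.coarse-almost-compact}, that $a\mapsto (1\ot T_n)(\al(a))$ is an almost compact operator $L^2(A)\to L^2(P\ovt B)$. One then cuts down by projections $p_n\in A$, $q_n\in P\ovt B$ with small trace complement to obtain genuinely compact operators, and finally composes with $\al^{-1}\circ E_{\al(A)}$. The passage through ``almost compact'' and the subsequent perturbation step are precisely what your sandwich argument is missing.
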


\begin{proof}
Choose a finite von Neumann algebra $P$, a projection $p \in P \ovt B$ and a faithful normal embedding $\al : A \to p(P \ovt B)p$ such that the bimodule $\bim{\al(A)}{p L^2(P \ovt B)}{P \ot 1}$ is coarse.

1.\ Assume that $B$ is amenable. Then, the trivial $(P \ovt B)$-bimodule is weakly contained in $L^2(P \ovt B) \ot_{P \ot 1} L^2(P \ovt B)$. A fortiori, the trivial $A$-bimodule is weakly contained in
$$\bim{\al(A)}{\bigl(p L^2(P \ovt B) \ot_{P \ot 1} L^2(P \ovt B) p\bigr)}{\al(A)} \; .$$
But this bimodule is coarse. So, $A$ is amenable.

2.\ Assume that $B$ has the Haagerup approximation property. Fix faithful normal tracial states $\tau$ on $P$ and $B$. Equip $A$ with the (non-normalized) faithful trace $\tau(a) = (\tau \ot \tau)\al(a)$ for all $a \in A$. Since $B$ has the Haagerup approximation property, by \cite[Propositions 2.2 and 2.4]{Jol00}, we can choose a sequence of normal, completely positive maps $\vphi_n : B \to B$ such that $\vphi_n(1) \leq 1$ and $\tau \circ \vphi_n \leq \tau$ for all $n$, such that $\|\vphi_n(b) - b\|_2 \to 0$ for every $b \in B$ and such that the induced bounded operators $T_n : L^2(B) \to L^2(B) : T_n(b) = \vphi_n(b)$ are compact.

We now use the method and ideas of \cite[Theorem 5.9]{DKP22}. Given a tracial von Neumann algebra $(M,\tau)$, we say that a bounded operator $S : L^2(A) \to L^2(M)$ is \emph{almost compact} if for every $\eps > 0$, there exist projections $p \in A$ and $q \in M$ with $\tau(1-p) < \eps$, $\tau(1-q) < \eps$ such that $a \mapsto q S(pap) q$ is compact as an operator from $L^2(A)$ to $L^2(M)$. One then checks that the almost compact operators from $L^2(A)$ to $L^2(M)$ form a norm closed linear subspace of $B(L^2(A),L^2(M))$.

We claim that for every compact operator $T : L^2(B) \to L^2(B)$, the operator
$$L^2(A) \to L^2(P \ovt B) : a \mapsto (1 \ot T)(\al(a))$$
is almost compact. We prove below that for every $b \in B$, the operator
$$T_b : L^2(A) \to L^2(P) : T_b(a) = (\id \ot \tau)(\al(a) (1 \ot b))$$
is almost compact. It then follows that for every rank one operator $T : L^2(B) \to L^2(B)$, the operator $(1 \ot T) \circ \al$ is almost compact, even with only using projections from $A$ and $P$. So, then also the claim follows.

Note that $\langle T_b(a) , d \rangle = \langle \al(a) \cdot (p (1 \ot b)) \cdot (d^* \ot 1) , p \rangle$ for all $a \in A$, $d \in P$. Since $\eta = p (1 \ot b)$ and $\xi = p$ are bounded vectors in the coarse $A$-$P$-bimodule $\bim{\al(A)}{p L^2(P \ot B)}{P \ot 1}$, it follows from Lemma \ref{lem.coarse-almost-compact} below that $T_b$ is almost compact.

Having proven the claim, we can choose projections $p_n \in A$ and $q_n \in P \ovt B$ such that $R_n : L^2(A) \to L^2(P \ovt B) : R_n(a) = q_n (\id \ot \vphi_n)(\al(p_n a p_n)) q_n$ is compact for every $n$ and $\|R_n(a) - \al(a)\|_2 \to 0$ for all $a \in A$. We define $\psi_n : A \to A : \psi_n = \al^{-1} \circ E_{\al(A)} \circ R_n$. Then $\psi_n$ is a sequence of subunital, subtracial completely positive maps, converging pointwise to the identity and inducing compact operators on $L^2(A)$. Thus, $A$ has the Haagerup property.
\end{proof}

Recall from the proof of Proposition \ref{prop.inherited} the concept of an almost compact operator $L^2(A) \to L^2(P)$, given tracial von Neumann algebras $A$ and $P$. Let $\bim{A}{\cH}{P}$ be a Hilbert $A$-$P$-bimodule. Recall that a vector $\xi \in \cH$ is said to be left bounded if the operator $L_\xi : L^2(P) \to \cH : b \mapsto \xi \cdot b$ is well defined and bounded. We similarly consider right bounded vectors $\eta \in \cH$ and their operators $R_\eta : L^2(A) \to \cH : a \mapsto a \cdot \eta$.

The following lemma is part of \cite[Theorem 5.10]{DKP22}. For completeness, we give a proof in our more specific setting.

\begin{lemma}\label{lem.coarse-almost-compact}
Let $A$ and $P$ be tracial von Neumann algebras and let $\bim{A}{\cH}{P}$ be a Hilbert $A$-$P$-bimodule. Let $\cH_0 \subset \cH$ be a set of left bounded vectors and let $\cH_1 \subset \cH$ be a set of right bounded vectors such that $L_\xi^* R_\eta$ is compact for all $\xi \in \cH_0$, $\eta \in \cH_1$. Assume that for $i \in \{0,1\}$, the linear span of $A \cdot \cH_i \cdot P$ is dense in $\cH$.

Then for every left bounded vector $\xi \in \cH$ and right bounded vector $\eta \in \cH$, the operator $L_\xi^* R_\eta : L^2(A) \to L^2(P)$ is almost compact.

In particular, this conclusion holds if $\bim{A}{\cH}{P}$ is a coarse $A$-$P$-bimodule.
\end{lemma}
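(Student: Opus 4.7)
The plan is to reduce the lemma to compactness of a cut-down operator via an algebraic identity, settle vectors in the algebraic spans by the ideal property of compact operators, and bridge $\cH$-norm approximation to bounded-vector norm control through Markov-type spectral cut-offs on the $P$-valued and $A$-valued inner products.

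First, using the bimodule identities $L_\xi^* \pi_l(p) = L_{p\xi}^*$, $L_P(q) L_\xi^* = L_{\xi q}^*$, and their analogues for $R_\eta$, a direct calculation yields, for all projections $p \in A$ and $q \in P$, the cut-down identity
$$L_P(q) R_P(q) \, L_\xi^* R_\eta \, L_A(p) R_A(p) = L_{p\xi q}^* R_{p\eta q}.$$
So it suffices, given $\eps > 0$, to produce $p \in A$ and $q \in P$ with $\tau(1-p), \tau(1-q) < \eps$ such that $L_{p\xi q}^* R_{p\eta q}$ is truly compact. For $\xi_0 \in \cH_0$, $\eta_0 \in \cH_1$ and $a, a' \in A$, $b, b' \in P$, the identity $L_{a\xi_0 b}^* R_{a'\eta_0 b'} = L_P(b^*) R_P(b') \, L_{\xi_0}^* R_{\eta_0} \, L_A(a^*) R_A(a')$ combined with the two-sided ideal property of the compacts shows that $L_{\xi'}^* R_{\eta'}$ is compact for every $\xi' \in \lspan(A \cdot \cH_0 \cdot P)$ and $\eta' \in \lspan(A \cdot \cH_1 \cdot P)$, and so is any cut-down thereof.

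For arbitrary left-bounded $\xi$ and right-bounded $\eta$, the density assumption provides approximating sequences $\xi_n \in \lspan(A \cdot \cH_0 \cdot P)$ and $\eta_n \in \lspan(A \cdot \cH_1 \cdot P)$ with $\xi_n \to \xi$ and $\eta_n \to \eta$ in $\cH$. The $P$-valued inner product $T_n := L_{\xi-\xi_n}^* L_{\xi-\xi_n} \in P_+$ has trace $\|\xi-\xi_n\|^2$, and analogously the $A$-valued $\tilde S_n \in A_+$ defined by $R_{\eta-\eta_n}^* R_{\eta-\eta_n} = R_A(\tilde S_n)$ has trace $\|\eta-\eta_n\|^2$. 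For a parameter $\delta > 0$, the spectral projections $q_n := \chi_{[0,\delta]}(T_n) \in P$ and $p_n := \chi_{[0,\delta]}(\tilde S_n) \in A$ satisfy the Markov bounds $\tau(1-q_n) \leq \|\xi-\xi_n\|^2/\delta$ and $\tau(1-p_n) \leq \|\eta-\eta_n\|^2/\delta$, and a direct computation gives the key bounded-vector estimates $\|L_{(\xi-\xi_n) q_n}\|_{op}^2 \leq \delta$ and $\|R_{p_n(\eta-\eta_n)}\|_{op}^2 \leq \delta$.

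Given $\eps > 0$, I would pick $\delta_k \to 0$ and, for each $k$, an index $n_k$ such that $\tau(1-p_{n_k}), \tau(1-q_{n_k}) < \eps/2^{k+1}$, and then set $p := \bigwedge_k p_{n_k}$, $q := \bigwedge_k q_{n_k}$, which are projections with $\tau(1-p), \tau(1-q) < \eps$ by subadditivity of the trace on projections. Since $p \leq p_{n_k}$ and $q \leq q_{n_k}$ for every $k$, writing $\xi = \xi_{n_k} + (\xi-\xi_{n_k})$ and likewise for $\eta$ decomposes $L_{p\xi q}^* R_{p\eta q}$ into the compact main term $L_{p\xi_{n_k} q}^* R_{p\eta_{n_k} q}$ plus three error terms of operator norm $O(\sqrt{\delta_k})$, the constant depending only on $\|L_\xi\|_{op}$ and $\|R_\eta\|_{op}$. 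Letting $k \to \infty$ realizes $L_{p\xi q}^* R_{p\eta q}$ as a norm limit of compact operators, hence compact, which is the desired almost compactness. The main obstacle is precisely that $\cH$-norm density of the algebraic spans does not translate directly into bounded-vector norm smallness; the Markov spectral cut-offs of the third paragraph are the key device that bridges this. For the final in-particular clause, when $\bim{A}{\cH}{P}$ is coarse one embeds $\cH$ as a sub-bimodule of an amplification of $\bim{A \ot 1}{L^2(A \ovt P)}{1 \ot P}$ and takes $\cH_0 = \cH_1$ to consist of projections of vectors $\hat 1 \otimes e_k$ into $\cH$, whose pairwise $L^* R$ operators are of rank at most one, verifying the hypothesis.
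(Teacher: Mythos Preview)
Your argument is correct and follows the same strategy as the paper: reduce to the algebraic spans via the bimodule identity $L_{a\xi b}^* R_{c\eta d} = L_P(b^*) R_P(d)\, L_\xi^* R_\eta\, L_A(a^*) R_A(c)$, then bridge $\cH$-norm approximation to operator-norm control by spectral cut-offs on $L_{\xi-\xi_n}^* L_{\xi-\xi_n} \in P_+$ (the paper phrases this as a noncommutative Egorov step extracting an increasing sequence $q_k \nearrow 1$ along a subsequence, while you spell out the Markov inequality explicitly and take a single meet $q = \bigwedge_k q_{n_k}$ for each $\eps$). For the coarse case, note that the \emph{projected} vectors $e(\hat 1 \ot e_k)$ need not give rank-one $L^*R$ once the bimodule projection $e$ intervenes; it is cleaner to apply the main statement directly to the ambient multiple of $L^2(A \ovt P)$ with $\cH_0 = \cH_1 = \{\hat 1_A \ot e_k \ot \hat 1_P\}$ (where $L_{\xi_k}^* R_{\xi_l}$ is genuinely rank $\leq 1$) and then observe that for $\xi,\eta \in \cH$ the operator $L_\xi^* R_\eta$ is the same whether computed in $\cH$ or in the ambient bimodule.
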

\begin{proof}
Denote by $\cK_i$ the linear span of $A \cdot \cH_i \cdot P$. Since
$$L_{a \xi b}^* R_{c \eta d} = \lambda_P(b)^* \, \rho_P(d) \, L_\xi^* R_\eta \, \lambda_A(a)^* \, \rho_A(c) \; ,$$
we get that $L_\xi^* R_\eta$ is compact for all $\xi \in \cK_0$ and $\eta \in \cK_1$.

Let $\xi \in \cH$ be an arbitrary left bounded vector and choose a sequence $\xi_n \in \cK_0$ such that $\|\xi - \xi_n\| \to 0$. Write $\zeta_n = \xi - \xi_n$. Then, $S_n := L_{\zeta_n}^* L_{\zeta_n}$ is a sequence in $P^+$ satisfying $\|S_n\|_1 \to 0$. After passage to a subsequence, we may assume that there is an increasing sequence of projections $q_k \in P$ such that $q_k \to 1$ strongly and, for every fixed $k$, $\lim_n \|S_n q_k\| = 0$. In particular, $\lim_n \|L_{\xi_n} q_k - L_{\xi} q_k\| = 0$ for every fixed $k$.

Given a right bounded vector $\eta \in \cH$, we similarly find $\eta_n \in \cK_1$ and $p_k \in A$ such that $\lim_n \|p_k R_{\eta_n} - p_k R_\eta\| = 0$ for every fixed $k$. Writing $T = L_\xi^* R_\eta$ and $T_n = L_{\xi_n}^* R_{\eta_n}$, it follows that for every $k$, the operator $a \mapsto q_k T(p_k a p_k) q_k$ is compact, because it is the norm limit of the compact operators $a \mapsto q_k T_n(p_k a p_k) q_k$. So, $T$ is almost compact.
\end{proof}

\subsection{Property (T)}

Property~(T) for von Neumann algebras was introduced in \cite{Con80}. We use the following equivalent definition, see e.g.\ \cite[Proposition 4.1]{Pop02}: a tracial von Neumann algebra $(M,\tau)$ has property~(T) iff for any $M$-$M$-bimodule $\bim{M}{\cH}{M}$ and sequence $\xi_n \in \cH$ satisfying
$$\|\xi_n \cdot a \| \leq \|a\|_2 \;\; , \;\; \|a \cdot \xi_n\| \leq \|a\|_2 \;\; , \;\; \lim_n \|a \cdot \xi_n - \xi_n \cdot a\| = 0 \quad\text{for all $a \in M$,}$$
we have that $\sup \{\|a \cdot \xi_n - \xi_n \cdot a\|\mid a \in M , \|a\|\leq 1\} \to 0$. Moreover, this property does not depend on the choice of trace $\tau$.

\begin{proposition}\label{prop.preserve-T}
Property (T) is preserved under measure equivalence of finite von Neumann algebras.
\end{proposition}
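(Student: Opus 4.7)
By symmetry it suffices to prove that if $(\bim{A}{\cK}{B},Q)$ is a measure equivalence and $B$ has property~(T), then so does $A$. The plan is an ``induction of correspondences'' along the coupling bimodule $\cK$, modelled on the classical proof that property~(T) is invariant under measure equivalence of groups: I push an $A$-$A$-bimodule with almost central vectors to a $B$-$B$-bimodule with almost central vectors, apply property~(T) for $B$, and then push the uniform commutation bound back through a dual use of $\cK$.

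Given an $A$-$A$-bimodule $\bim{A}{\cH}{A}$ with tracial almost-$A$-central unit vectors $\xi_n$, I would form the $B$-$B$-bimodule
\[
\cL \;:=\; \overline{\cK}\otimes_A \cH\otimes_A \cK ,
\]
which carries a commuting $Q\op\otimes Q$-action. Using Lemma~\ref{lem.charac} to realise $\cK^{\oplus n}\cong pL^2(P\ovt B)$ with $P=M_n(\C)\ot Q\op$ and a normal embedding $A\hookrightarrow p(P\ovt B)p$, pick a finite family $v_1,\dots,v_k\in\cK$ of right $B$-bounded vectors generating $\cK$ as an $A$-$Q\op$-bimodule, and set $\eta_n:=\sum_i\bar v_i\otimes\xi_n\otimes v_i$. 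For a fixed $b\in B$, the right multiplication $v_i\cdot b$ expands as a finite sum $\sum_j a_{ij}(b)\cdot v_j\cdot q_{ij}(b)\op$ with $a_{ij}(b)\in A$ and $q_{ij}(b)\in Q$, and reshuffling the $A$- and $Q\op$-factors across the two tensor products turns the commutator $b\eta_n-\eta_n b$ into a finite linear combination of commutators $a\xi_n-\xi_n a$ with $a\in A$ depending on $b$. These tend to $0$ by hypothesis, so $\eta_n$ is almost $B$-central in $\cL$.

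Property~(T) for $B$ then yields $\sup_{\|b\|\leq 1}\|b\eta_n-\eta_n b\|\to 0$. To transfer this back to $A$, I would apply the mirrored construction: using the other half of the measure equivalence, pick a finite family of left $A$-bounded vectors generating $\cK$ as a $Q$-$B$-bimodule and contract $\cL$ via $\cK\otimes_B(\cdot)\otimes_B\overline{\cK}$. The key structural input is that $\cK\otimes_B\overline{\cK}$, as an $A$-$A$-bimodule with its commuting $Q\otimes Q\op$-action, is a corner of $L^2(A)\otimes L^2(Q\ovt Q\op)$ (combining Lemma~\ref{lem.charac} with the coarseness of $\bim{Q}{\cK}{B}$), so that $\cK\otimes_B\cL\otimes_B\overline{\cK}$ is a finite $(Q\ovt Q\op)$-amplification of $\cH$. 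The contracted vectors then descend to vectors $\xi'_n$ in this amplification, close to a fixed scalar multiple of $\xi_n$, satisfying $\sup_{\|a\|\leq 1}\|a\xi'_n-\xi'_n a\|\to 0$. Faithfulness of the $A$-action on $\cK$ is used to ensure that no element of $A$ is lost in the transfer, which then establishes property~(T) for $A$.

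The hard part will be making this backward transfer genuinely quantitative. The functorial identification $\cK\otimes_B\overline{\cK}\cong$ (corner of $L^2(A)\otimes L^2(Q\ovt Q\op))$ is conceptually natural, but extracting honest norm comparisons between the contracted vectors and $\xi_n$ requires compatible Pimsner-Popa-type bases of $\cK$ with respect to both the $A$-$Q\op$- and $Q$-$B$-bimodule structures, together with careful tracking of amplification constants across the two contractions. A cleaner alternative may be to reformulate the whole argument via the Connes-Jones characterisation of property~(T) as continuity of correspondence-valued functors in the Fell topology, which reduces the backward transfer to a functorial continuity statement for the induction functor $\bim{A}{\cH}{A}\mapsto \bim{B}{\cL}{B}$ rather than an explicit basis computation.
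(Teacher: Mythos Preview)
Your forward step can be made to work, though not quite via the ``finite expansion $v_i b=\sum_j a_{ij}(b)v_jq_{ij}(b)\op$'' you write: with a genuine Pimsner--Popa basis the coefficients lie in $A\ovt Q$ rather than in simple tensors, so one should instead use Lemma~\ref{lem.charac} directly. Realising the coupling via $\be:B\hookrightarrow q(P\op\ovt A)q$, the induced $B$-$B$-bimodule is $q(L^2(P\op)\ot\cH)q$, and $q(1\ot\xi_n)q$ is almost $\be(B)$-central because $\be(B)\subset P\op\ovt A$ and the tracial bounds on $\xi_n$ let one pass from $P\op\otalg A$ to $P\op\ovt A$.

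The genuine gap is the backward transfer. Your structural claim that $\cK\ot_B\overline{\cK}$ is a corner of $L^2(A)\ot L^2(Q\ovt Q\op)$ is not justified: coarseness of $\bim{Q}{\cK}{B}$ constrains the $Q$-$Q\op$ structure of $\cK\ot_B\overline\cK$ but says nothing about its $A$-$A$ structure, which in the concrete model is the basic-construction bimodule of $\be(B)\subset q(P\op\ovt A)q$ restricted along $1\ot A$ and has no reason to be built from copies of $L^2(A)$. Even granting some version of the identification, you give no argument that the contracted vectors are ``close to a fixed scalar multiple of $\xi_n$''; this is exactly the quantitative statement that needs proof, and the Fell-topology reformulation only restates it. The paper avoids the double induction: after one induction and applying property~(T), it averages over $\cU(A)$ to produce an \emph{exactly} $\al(A)$-central sequence $\rho_n$ close to the induced almost-central vectors, then uses the other half of the measure equivalence only to pull back the single vector $\xi=U^{*}(q)$ through the intertwining unitary $U$ of Lemma~\ref{lem.charac}. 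This $\xi$ defines an isometry $S_\xi$ from the original bimodule into the induced one; exact $A$-centrality of $\rho_n$ makes the multiplication operator $L_{\rho_n}$ intertwine the two normal representations $\pi_1,\pi_2$ of $P\op\ovt A$, and a short chain of approximations then gives the uniform $B$-estimate without any second contraction.
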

\begin{proof}
Let $A$ and $B$ be finite von Neumann algebras that are measure equivalent. By Lemma \ref{lem.charac}, we can fix a finite von Neumann algebra $P$, projections $p \in P \ovt B$ and $q \in P\op \ovt A$, faithful normal $*$-homomorphisms $\al : A \to p(P \ovt B)p$ and $\be : B \to q(P\op \ovt A)q$ and a unitary operator
\begin{equation}\label{eq.unitary-equiv}
U : p L^2(P \ovt B) \to L^2(P\op \ovt A)q \quad\text{s.t.}\quad U(\al(a) \cdot \eta \cdot (d \ot b)) = (d\op \ot a) \cdot U(\eta) \cdot \be(b)
\end{equation}
for all $a \in A$, $b \in B$ and $d \in P$. Fix faithful normal tracial states $\tau$ on $A$ and $P$. We equip $B$ with the non-normalized faithful trace $\tau(b) = (\tau \ot \tau)\be(b)$. We may view the $L^2$-spaces in \eqref{eq.unitary-equiv} as taken w.r.t.\ these traces.

Assume that $A$ has property~(T). We prove that $B$ has property~(T). So we fix a Hilbert $B$-$B$-bimodule $\cH$ with a sequence of vectors $\eta_n \in \cH$ satisfying
$$\|b \cdot \eta_n \| \leq \|b\|_2 \;\; , \;\; \|\eta_n \cdot b \| \leq \|b\|_2 \;\;\text{and}\;\; \lim_n \|b \cdot \eta_n - \eta_n \cdot b\| = 0 \;\;\text{for all $b \in B$.}$$
We have to prove that $\sup \bigl\{ \|b \cdot \eta_n - \eta_n \cdot b\| \bigm| b \in B , \|b\| \leq 1 \bigr\} \to 0$.

Consider the Hilbert $A$-$A$-bimodule $\cK = p \cdot (L^2(P) \ot \cH) \cdot p$ with bimodule structure given by $\al(a) \cdot \eta \cdot \al(a')$. Since $A$ has property~(T), the sequence of almost central vectors $p \cdot (1 \ot \eta_n) \cdot p \in \cK$ satisfies
$$\sup \bigl\{ \|\al(a) \cdot (1 \ot \eta_n) \cdot \al(a^*) - p \cdot (1 \ot \eta_n) \cdot p \|  \bigm| a \in \cU(A) \bigr\} \to 0 \; .$$
Defining $\rho_n$ as the unique element of minimal norm in the closed convex hull of
$$\{ \al(a) \cdot (1 \ot \eta_n) \cdot \al(a^*) \mid a \in \cU(A)\}$$
inside $\cK$, we get that $\al(a) \cdot \rho_n = \rho_n \cdot \al(a)$ for all $a \in A$. Since $\|(1 \ot \eta_n) \cdot p - p \cdot (1 \ot \eta_n)\| \to 0$, we also get that $\|\rho_n - (1 \ot \eta_n) \cdot p\| \to 0$.

Define $\xi \in p L^2(P \ovt B)$ by $\xi = U^*(q)$. For all $b,b' \in B$, we have that
$$\langle \xi \cdot (1 \ot b), \xi \cdot (1 \ot b') \rangle = \langle \be(b) , \be(b') \rangle = \langle b , b' \rangle \; .$$
So, we can view $\xi \in L^2(P) \ovt B$ in such a way that $\xi^* \xi = 1$ in $B$. Denoting by $\lambda_\cH : B \to B(\cH)$ the left module action, $S_\xi := (\id \ot \lambda_\cH)(\xi)$ is then a well-defined isometry from $\cH$ to $p \cdot (L^2(P) \ot \cH)$. By definition, $S_\xi(\eta \cdot b) = (S_\xi(\eta)) \cdot (1 \ot b)$ for all $\eta \in \cH$ and $b \in B$. Also, whenever $\eta \in \cH$ is a right bounded vector, $S_\xi(\eta) = (1 \ot R_\eta)(\xi)$, where $R_\eta : L^2(B) \to \cH$ is given by $R_\eta(b) = b \cdot \eta$.

Since $S_\xi$ is an isometry, it suffices to prove that $\|S_\xi(\eta_n \cdot b) - S_\xi(b \cdot \eta_n)\| \to 0$ uniformly on the unit ball $(B)_1$ of $B$.

For all $a \in \cU(A)$ and all $b \in P \ovt B$, we have that
$$\|\al(a) \cdot (1 \ot \eta_n) \cdot (\al(a^*) b)\| \leq \|(1 \ot \eta_n) \cdot (\al(a^*) b)\| \leq \|\al(a^*) b\|_2 \leq \|b\|_2 \; .$$
By taking convex combinations, it follows that $\|\rho_n \cdot b\| \leq \|b\|_2$. We thus find a well-defined bounded operator $L_{\rho_n} : L^2(P \ovt B) \to L^2(P) \ot \cH$ satisfying $L_{\rho_n}(b) = \rho_n \cdot b$ for all $b \in P \ovt B$. Also note that $\|L_{\rho_n}\| \leq 1$ for all $n$.

We claim that $\|L_{\rho_n}(\zeta) - (1 \ot R_{\eta_n})(\zeta)\| \to 0$ for all $\zeta \in p L^2(P \ovt B)$. Since also $\|R_{\eta_n}\| \leq 1$ for all $n$, it suffices to prove this claim for $\zeta = b \in p(P \ovt B)$. In that case, $L_{\rho_n}(b) = \rho_n \cdot b$, while $(1 \ot R_{\eta_n})(b) = b \cdot (1 \ot \eta_n)$. The claim then follows because $\|b \cdot (1 \ot \eta_n) - (1 \ot \eta_n) \cdot b\| \to 0$ and $\|(1 \ot \eta_n) \cdot p - \rho_n\| \to 0$.

Note that $S_\xi(\eta_n \cdot b) = (S_\xi(\eta_n)) \cdot (1 \ot b)$ for all $b \in B$, while $S_\xi(\eta_n) = (1 \ot R_{\eta_n})(\xi)$. Using the claim in the previous paragraph, it follows that
\begin{equation}\label{eq.stap1}
\|S_\xi(\eta_n \cdot b) - (L_{\rho_n}(\xi)) \cdot (1 \ot b)\| \to 0 \quad\text{uniformly on $(B)_1$.}
\end{equation}

Denote by $\pi_1 : P\op \ovt A \to B(p L^2(P \ovt B))$ the normal $*$-homomorphism given by $\pi_1(d\op \ot a)(\zeta) = \al(a) \zeta (d \ot 1)$. This is well-defined, because $\pi_1(x)(U^*(\gamma)) = U^*(x \cdot \gamma)$. For the same reason, we have that $\pi_1(\be(b))(\xi) = \xi \cdot (1 \ot b)$ for all $b \in B$. Note that the range of $\pi_1$ is contained in $p (B(L^2(P)) \ovt B) p$. Then also $\pi_2 = (\id \ot \lambda_\cH) \circ \pi_1$ is a well-defined normal $*$-homomorphism from $P\op \ovt A$ to $B(p \cdot (L^2(P) \ot \cH))$. We have that $\pi_2(d\op \ot a)(\zeta) = \al(a) \cdot \zeta \cdot (d \ot 1)$ whenever $\zeta \in p \cdot (L^2(P) \ot \cH)$.

With these notations, we first get that $(L_{\rho_n}(\xi)) \cdot (1 \ot b) = L_{\rho_n}(\xi \cdot (1 \ot b)) = L_{\rho_n}(\pi_1(\be(b))(\xi))$. Next, we note that for all $d \in P$, $a \in A$ and $x \in p(P \ovt B)$,
\begin{align*}
L_{\rho_n}(\pi_1(d\op \ot a)(x)) &= \rho_n \cdot (\al(a) x (d \ot 1)) = \al(a) \cdot \rho_n \cdot (x (d \ot 1)) \\ &= \al(a) \cdot L_{\rho_n}(x) \cdot (d \ot 1) = \pi_2(d\op \ot a)(L_{\rho_n}(x)) \; .
\end{align*}
By density and normality, $L_{\rho_n}(\pi_1(c)(\zeta)) = \pi_2(c)(L_{\rho_n}(\zeta))$ for all $c \in P\op \ovt A$ and $\zeta \in p L^2(P \ovt B)$. In particular,
$$L_{\rho_n}(\pi_1(\be(b))(\xi)) = \pi_2(\be(b))(L_{\rho_n}(\xi)) \quad\text{for all $b \in B$.}$$
Since $\|L_{\rho_n}(\xi) - (1 \ot R_{\eta_n})(\xi)\| \to 0$, we conclude that
\begin{equation}\label{eq.stap2}
\|(L_{\rho_n}(\xi)) \cdot (1 \ot b) - \pi_2(\be(b))(1 \ot R_{\eta_n})(\xi)\| \to 0 \quad\text{uniformly on $(B)_1$.}
\end{equation}
Now,
\begin{align*}
\pi_2(\be(b))(1 \ot R_{\eta_n})(\xi) &= (1 \ot R_{\eta_n})(\pi_1(\be(b))(\xi)) = (1 \ot R_{\eta_n})(\xi \cdot (1 \ot b)) \\
& = (1 \ot R_{b \cdot \eta_n})(\xi) = S_\xi(b \cdot \eta_n) \; .
\end{align*}
Combining this equality with \eqref{eq.stap1} and \eqref{eq.stap2}, we find that $\|S_\xi(\eta_n \cdot b) - S_\xi(b \cdot \eta_n)\| \to 0$ uniformly on the unit ball $(B)_1$.
\end{proof}

\subsection{Example}

In Theorem \ref{thm.free-group-factor-ME}, we proved that the free group factor $L(\F_2)$ admits a measure equivalence embedding into any nonamenable II$_1$ factor $M$. It remains an open problem to decide if $L(\F_2)$ actually embeds into any nonamenable II$_1$ factor.

In the following example, we show that the existence of a measure equivalence embedding of $A$ into $B$ is, in general, a strictly weaker property than the existence of an embedding of $A$ into $B$, or into an amplification $B^t$. So there is certainly no hope to derive in an abstract way from Theorem \ref{thm.free-group-factor-ME} that $L(\F_2)$ embeds into every nonamenable II$_1$ factor.

To give such an example, we necessarily have to rely on sophisticated results. Until the paper \cite{PV21}, basically all non-embeddability results for II$_1$ factors were based on qualitative properties of the style: II$_1$ factors with a certain property (e.g.\ property~(T)) do not embed into a II$_1$ factor with a ``strongly opposite'' property (e.g.\ Haagerup property). As we have shown above, such qualitative obstructions for the existence of an embedding, typically also are an obstruction for the existence of a measure equivalence embedding. On the other hand, \cite[Theorem A]{PV21} provides more quantitative non-embeddability results that we can use to prove the following result.

Recall that for a finite dimensional C$^*$-algebra $A_0$, the \emph{Markov trace} $\tau(a)$, $a \in A_0$, is defined as the trace of the left multiplication operator $L_a : A_0 \to A_0$. Dividing by $\dim A_0$, we obtain the \emph{normalized Markov trace}.

\begin{proposition}\label{prop.counterexample}
For every finite-dimensional C$^*$-algebra $A_0$, denote by $\tau_{A_0}$ the normalized Markov trace on $A_0$. Take $\Gamma = \F_2$ and define the II$_1$ factor $M(A_0)$ as the left-right Bernoulli crossed product
$$M(A_0) = (A_0,\tau_{A_0})^{\ot \Gamma} \rtimes (\Gamma \times \Gamma) \; .$$
Let $A_0$, $A_1$ be finite-dimensional C$^*$-algebras of the same dimension.
\begin{enumlist}
\item The II$_1$ factors $M(A_0)$ and $M(A_1)$ are measure equivalent.
\item If $A_0 \not\cong A_1$, there does not exist an embedding of $M(A_0)$ into an amplification of $M(A_1)$.
\end{enumlist}
\end{proposition}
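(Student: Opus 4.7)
The proof splits into the construction of a measure equivalence for (1) and the invocation of the rigidity result \cite[Theorem~A]{PV21} for (2).

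For part (1), my plan builds on the observation that the normalized Markov-trace $L^2$-spaces $L^2(A_i,\tau_{A_i})$ are Hilbert spaces of the same dimension $n = \dim A_0 = \dim A_1$. Choosing any Hilbert-space isomorphism between the two and tensoring over $\Gamma$ yields a $\Lambda$-equivariant unitary between the Bernoulli bases $L^2(P_0) \cong L^2(P_1)$, where $\Lambda = \Gamma \times \Gamma$ and $P_i = A_i^{\otimes \Gamma}$. This lifts to an isomorphism of $L(\Lambda)$-$L(\Lambda)$-bimodules $L^2(M(A_0)) \cong L^2(M(A_1))$. I would then take the measure equivalence bimodule to be $\cH = L^2(M(A_0)) \ot_{L(\Lambda)} L^2(M(A_1))$ with the natural left $M(A_0)$-action and right $M(A_1)$-action, and pick an appropriate finite middle von Neumann algebra $Q \subset \End_{M(A_0)-M(A_1)}(\cH)$, most likely an amplification of $L(\Lambda)$ by $M_n(\C)$ reflecting the $n$-dimensional fiber of the base. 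Coarseness of $\bim{M(A_0)}{\cH}{Q\op}$ and $\bim{Q}{\cH}{M(A_1)}$ should follow from the asymptotic independence of nontrivial Bernoulli tensors under distinct $\Lambda$-translates, while finite generation is provided by a Pimsner-Popa-type orthonormal basis.

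For part (2), my plan is to invoke \cite[Theorem~A]{PV21} directly. That result, formulated precisely for left-right Bernoulli II$_1$ factors $B^{\otimes\Gamma} \rtimes (\Gamma \times \Gamma)$ with finite-dimensional tracial base $B$, characterizes all embeddings $\pi : M(A_0) \to M(A_1)^t$: any such embedding must intertwine the Bernoulli base subalgebras $A_0^{\otimes\Gamma}$ and $A_1^{\otimes\Gamma}$ modulo unitary conjugacy and amplification. From this intertwining, the isomorphism class of $A_0$ as a tracial C$^*$-algebra can be recovered; combined with $\dim A_0 = \dim A_1$, this forces $A_0 \cong A_1$, contradicting the hypothesis.

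The main obstacle lies in part (1). The bimodule $\cH$ is easy to write down, but identifying the correct middle algebra $Q$ is delicate: the naive choice $Q = L(\Lambda)$ fails because mixing of the Bernoulli action yields $L(\Lambda)' \cap M(A_i) = \C$, so the induced bimodule $\bim{L(\Lambda)}{\cH}{M(A_1)}$ cannot be finitely generated over $M(A_1)$. One must enlarge $Q$ by an amplification that accounts for the $n$-dimensional base fiber, and simultaneously verify both coarseness conditions and both finite-generation conditions — this will be the technical heart of the proof.
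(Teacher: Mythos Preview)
Your treatment of part (2) is essentially the paper's argument: \cite[Theorem~A]{PV21} forces any embedding $M(A_0)\hookrightarrow M(A_1)^t$ to yield a trace-preserving embedding $(A_0,\tau_{A_0})\hookrightarrow (A_1,\tau_{A_1})$, which by equality of dimensions implies $A_0\cong A_1$.

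For part (1), however, there is a genuine gap, and it is exactly the one you flag yourself: you have not produced the middle algebra $Q$. Your candidate bimodule $\cH=L^2(M(A_0))\ot_{L(\Lambda)}L^2(M(A_1))$ is natural, but the hope that $Q$ is ``an amplification of $L(\Lambda)$ by $M_n(\C)$'' does not pan out. For $Q$ to sit inside $\End_{M(A_0)-M(A_1)}(\cH)$ it must commute with both outer actions, and there is no evident action of $L(\Lambda)$ (or of $M_n(\C)\ot L(\Lambda)$) on $\cH$ that does so; the middle $L(\Lambda)$ has already been fused out. Moreover, finite generation of $\bim{Q}{\cH}{M(A_1)}$ requires $Q$ to be large enough to absorb the infinite right $L(\Lambda)$-dimension of $L^2(M(A_0))$, while coarseness of $\bim{M(A_0)}{\cH}{Q\op}$ pulls in the opposite direction. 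You have correctly located the difficulty but not resolved it, and the ``asymptotic independence of Bernoulli tensors'' heuristic does not by itself supply a finite $Q$ with all four required properties.

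The paper avoids this entirely by taking a different bimodule. It first reduces, via transitivity of measure equivalence, to the case $A_1=\C^n$ with $n=\dim A_0$. Writing $A_0=\bigoplus_s M_{n_s}(\C)$, it uses the order-$n_s$ Weyl automorphisms $\al_s,\be_s$ of each $M_{n_s}(\C)$ to build a unital $*$-homomorphism $\theta:A_0\to A_0\ot A_1$ satisfying the Markov condition $(\id\ot\tau_{A_1})\theta(a)=\tau_{A_0}(a)\,1$. Tensoring over $\Gamma$ and passing to the crossed product yields an embedding $\al:M(A_0)\to M(A_0)\ovt M(A_1)$ with $(\id\ot\tau)\al(a)=\tau(a)\,1$. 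The measure-equivalence bimodule is then simply
\[
\cH=\bim{\al(M(A_0))}{L^2\bigl(M(A_0)\ovt M(A_1)\bigr)}{1\ot M(A_1)},
\]
with middle algebra $Q$ given by the right action of $M(A_0)\ot 1$. Coarseness of $\bim{\al(M(A_0))}{\cH}{M(A_0)\ot 1}$ follows immediately from the Markov condition (Lemma~\ref{lem.criterion-embedding}), and finite generation on both sides comes from the fact that $a\ot b\mapsto \theta(a)(b\ot 1)$ is a linear bijection $A_0\ot A_0\to A_0\ot A_1$, so that $\al(M(A_0))(M(A_0)\ot 1)$ spans $L^2(M(A_0)\ovt M(A_1))$. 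Thus the paper's $Q$ is $M(A_0)$ itself, not anything built from $L(\Lambda)$, and the whole construction rests on the explicit Weyl-automorphism embedding rather than on an abstract Hilbert-space identification of the base algebras.
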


\begin{proof}
1.\ Let $A_0$ be any finite-dimensional C$^*$-algebra and denote by $A_1 = \C^{\dim A_0}$ the unique abelian C$^*$-algebra having the same dimension. Since, by Lemma \ref{lem.transitive}, measure equivalence is an equivalence relation, it suffices to prove that $M(A_0)$ and $M(A_1)$ are measure equivalent.

Write $A_0 = \bigoplus_{s=1}^k M_{n_s}(\C)$. For every $s$, choose a primitive $n_s$-th root of unity $\om_s$. Define the commuting automorphisms $\al_s, \be_s \in \Aut(M_{n_s}(\C))$ of order $n_s$ given by
$$(\al_s(a))_{ij} = \om_s^{i-j} \, a_{ij} \quad\text{and}\quad (\be_s(a))_{ij} = a_{i+1,j+1} \;\;\text{identifying $n_s+1 = 1$.}$$
Define
$$\theta_s : M_{n_s}(\C) \to M_{n_s}(\C) \ot \ell^\infty\bigl(\{1,\ldots,n_s\} \times \{1,\ldots,n_s\}\bigr) : \theta_s(a) = \sum_{k,l=1}^{n_s} (\al_s^k \be_s^l)(a) \ot \delta(k,l) \; .$$
Taking the direct sum over $s$, we find the $*$-homomorphism $\theta : A_0 \to A_0 \ot A_1$ satisfying $(\id \ot \tau_{A_1})(\theta(a)) = \tau_{A_0}(a) \, 1$ for all $a \in A_0$. Taking a tensor product over $\Gamma$ and the crossed product, we find the natural embedding $\al : M(A_0) \to M(A_0) \ovt M(A_1)$ satisfying $(\id \ot \tau)\al(a) = \tau(a)$ for all $a \in M(A_0)$.

Since $(\id \ot \tau_{A_1})(\theta(a)) = \tau_{A_0}(a) \, 1$ for all $a \in A_0$, the linear map $A_0 \ot A_0 \to A_0 \ot A_1 : a \ot b \mapsto \theta(a)(b \ot 1)$ is isometric, and hence bijective because the dimensions are finite. It then follows that the linear span of $\al(M(A_0)) (M(A_0) \ot 1)$ is $\|\,\cdot\,\|_2$-dense in $M(A_0) \ovt M(A_1)$. By Lemma \ref{lem.criterion-embedding} below, it follows that $M(A_0)$ and $M(A_1)$ are measure equivalent.

2.\ By \cite[Theorem A]{PV21}, if $M(A_0)$ embeds into $M(A_1)^t$ for some $t > 0$, there must exist a trace preserving embedding of $(A_0,\tau_{A_0})$ into $(A_1,\tau_{A_1})$. Since $A_0$ and $A_1$ have the same finite dimension, this forces $A_0 \cong A_1$.
\end{proof}

\begin{lemma}\label{lem.criterion-embedding}
Let $(A,\tau)$ and $(B,\tau)$ be tracial von Neumann algebras. Assume that $(P,\tau)$ is a tracial von Neumann algebra and $\al : A \to P \ovt B$ is a unital $*$-homomorphism satisfying $(\id \ot \tau)\al(a) = \tau(a) \, 1$ for all $a \in A$. Then, $\cH = \bim{\al(A)}{L^2(P \ovt B)}{1 \ot B}$ is a measure equivalence embedding of $A$ into $B$.

If moreover $\al(A)(P \ot 1)$ has a dense linear span in $L^2(P \ovt B)$, then $\cH$ is a measure equivalence of $A$ and $B$.
\end{lemma}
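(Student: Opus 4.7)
My plan is to take $Q = P^{\op}$, embedded in $\End_{A-B}(\cH)$ via right multiplication by $P \ot 1$ on $\cH = L^2(P \ovt B)$, and verify the four requirements in Definition~\ref{def.ME-II1}. Three of these are essentially routine: the left $A$-action is faithful because the hypothesis forces $\al$ to be trace-preserving (and hence injective); $\cH$ is cyclic, so finitely generated, as a $Q$-$B$-bimodule with generator $\hat 1$; and $\bim{Q}{\cH}{B} = \bim{P^{\op}}{L^2(P \ovt B)}{B}$ is coarse since both actions are right multiplications and jointly factor through the standard normal representation of $(P \ovt B)^{\op} \cong P^{\op} \ovt B^{\op}$ on $L^2(P \ovt B)$.

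The substantive step is coarseness of $\bim{A}{\cH}{Q^{\op}} = \bim{A}{L^2(P \ovt B)}{P}$, i.e.\ that the commuting normal representations of $A$ (via $\al$) and of $P^{\op}$ (via right multiplication by $P \ot 1$) on $L^2(P \ovt B)$ jointly extend to a normal representation of $A \ovt P^{\op}$. This is where the hypothesis $(\id \ot \tau)\al(a) = \tau(a)\,1$ gets used. My approach is to show that matrix coefficients on bounded vectors $\xi, \eta \in P \ovt B \subset \cH$ are normal functionals on $A \ovt P^{\op}$. Using right $P$-bimodularity of $\id \ot \tau$ and the trace on $P$,
\[
\langle \pi(a \ot p^{\op})\xi, \eta\rangle = (\tau \ot \tau)\bigl(\eta^*\al(a)\xi(p \ot 1)\bigr) = \tau\bigl(\Phi_{\xi,\eta}(a) \cdot p\bigr),
\]
where $\Phi_{\xi,\eta}(a) := (\id \ot \tau)(\eta^*\al(a)\xi) \in P$ is normal in $a$ as a composition of the normal $*$-homomorphism $\al$, bounded multiplications by $\xi$ and $\eta$, and the normal conditional expectation. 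In a tracial setting, normal linear maps $\Phi : A \to P$ correspond to elements of the predual of $A \ovt P^{\op}$ via $a \ot p^{\op} \mapsto \tau(\Phi(a)p)$, so each $\phi_{\xi,\eta}$ extends to a normal functional on $A \ovt P^{\op}$. The hardest part of the argument is to upgrade this: combined with the uniform bound $\|\pi(x)\| \leq \|x\|_{\min}$ on $A \otalg P^{\op}$ and the $\|\cdot\|_2$-density of bounded vectors, one concludes by a standard criterion that $\pi$ itself extends to a normal $*$-representation of $A \ovt P^{\op}$ on $\cH$.

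For the moreover statement, the density assumption on $\al(A)(P \ot 1)$ is exploited through the map $T(a \ot p^{\op}) = \widehat{\al(a)(p \ot 1)}$: using traciality and the hypothesis applied to $a^*a$,
\[
\|T(a \ot p^{\op})\|^2 = (\tau \ot \tau)\bigl(\al(a^*a)(pp^* \ot 1)\bigr) = \tau\bigl(\tau(a^*a) \cdot pp^*\bigr) = \|a\|_2^2\|p\|_2^2,
\]
so $T$ is isometric. One checks it is an $A$-$P^{\op}$-bimodule map, and under the density hypothesis its image is all of $L^2(P \ovt B)$, making $T$ a unitary bimodule isomorphism $L^2(A \ovt P^{\op}) \cong \cH$. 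This gives the extra finite-generation of $\cH$ as an $A$-$Q^{\op}$-bimodule (with single generator $\hat 1$) and the faithful right $B$-action, upgrading the measure equivalence embedding to a genuine measure equivalence.
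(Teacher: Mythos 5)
There is a genuine gap at the heart of your coarseness argument, namely the claim that ``normal linear maps $\Phi : A \to P$ correspond to elements of the predual of $A \ovt P\op$ via $a \ot p\op \mapsto \tau(\Phi(a)p)$.'' Normal bounded maps $A \to P$ correspond only to \emph{separately} normal (binormal) functionals on $A \otalg P\op$, and these in general do not extend to normal functionals on $A \ovt P\op$. Concretely, take $A = P = M$ a II$_1$ factor and $\Phi = \id_M$: the functional $a \ot p\op \mapsto \tau(ap)$ is the vector state of $\hat{1}$ in the standard bimodule $\bim{M}{L^2(M)}{M}$; if it extended normally to $M \ovt M\op$, its GNS construction would embed the identity bimodule into a multiple of the coarse bimodule, which is impossible since $\hat{1}$ is a central vector while a multiple of the coarse bimodule has none ($JMJ$ is diffuse, hence contains no nonzero Hilbert--Schmidt operators). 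A structural symptom of the problem is that your coarseness argument never actually invokes the hypothesis $(\id \ot \tau)\al(a) = \tau(a)1$: the displayed identity $\langle \pi(a \ot p\op)\xi,\eta\rangle = \tau(\Phi_{\xi,\eta}(a)p)$ and the normality of $\Phi_{\xi,\eta}$ hold for an arbitrary $\al$, so running your argument with $B = \C$ and $\al = \id_A$ would ``prove'' that the identity bimodule of any II$_1$ factor is coarse, which is false whenever $A$ is diffuse. The auxiliary bound $\|\pi(x)\| \leq \|x\|_{\min}$ is likewise unjustified a priori (a $*$-representation of $A \otalg P\op$ with commuting normal legs is only bounded by the binormal norm; min-boundedness is essentially what coarseness asserts), and it would not save the argument anyway: for the hyperfinite II$_1$ factor the standard representation of $M \otalg M\op$ is min-continuous by injectivity, yet still not normal.

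The paper's proof shows where the hypothesis must enter and why your coefficient-by-coefficient strategy cannot see it. One considers the $*$-homomorphism $\psi : A \otalg P\op \to B(L^2(P)) \ovt B$, $\psi(a \ot d\op)\zeta = \al(a)\zeta(d \ot 1)$, and composes it with a \emph{single} faithful normal state $\Omega \ot \tau$ on the target. The hypothesis $(\id \ot \tau)\al(a) = \tau(a)1$ is exactly what makes $(\Omega \ot \tau)\circ\psi = \tau \ot \om_0$ a normal product state on $A \ovt P\op$, and for a $*$-homomorphism it suffices that one faithful normal state pulls back to a normal state in order to conclude normal extendability (a GNS argument exploiting multiplicativity, which individual matrix coefficients do not have). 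Your remaining steps are fine: the three routine verifications are correct, and your ``moreover'' computation with the isometry $T(a \ot p\op) = \widehat{\al(a)(p \ot 1)}$ is correct (the polarized identity $\langle T(a \ot p\op), T(a' \ot p'^{\,\op})\rangle = \tau(a'^*a)\,\tau(p\,p'^*)$ does follow from the hypothesis); in fact, under the density assumption it identifies $\bim{A}{\cH}{P\ot 1}$ unitarily with the standard $A \ovt P\op$-bimodule and thus yields coarseness independently in that case. But it does not cover the general statement, so the first and main assertion of the lemma remains unproven in your proposal.
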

\begin{proof}
We only have to show that the bimodule $\bim{\al(A)}{L^2(P \ovt B)}{P \ot 1}$ is coarse. Choose a faithful normal state $\Omega$ on $B(L^2(P))$. Define the $*$-homomorphism
$$\psi : A \otalg P\op \to B(L^2(P)) \ovt B : \psi(a \ot d\op) \zeta = \al(a)\zeta (d \ot 1) \; .$$
Then, $(\Omega \ot \tau)\psi(x) = (\tau \ot \om_0)(x)$ for all $x \in A \otalg P\op$, where $\om_0$ is a faithful normal state on $P\op$. So, $\psi$ extends to a normal $*$-homomorphism from $A \ovt P\op$ to $B(L^2(P)) \ovt B$. In particular, $\bim{\al(A)}{L^2(P \ovt B)}{P \ot 1}$ is coarse.
\end{proof}

\end{document}